\documentclass{amsart}

\usepackage{kotex}
\usepackage{amsmath}
\usepackage{amsfonts}
\usepackage{amscd}
\usepackage{stmaryrd}
\usepackage{slashed}

\usepackage{amssymb}
\usepackage{graphicx}
\usepackage{caption}
\usepackage{subcaption}
\usepackage{dsfont}
\usepackage{color}
\usepackage{hyperref}
\usepackage{bbm}
\usepackage{a4wide}
\usepackage{sseq}
\usepackage{tikz-cd}
\usepackage[abs]{overpic} 


\newtheorem{theorem}{Theorem}[section]
\newtheorem{lemma}[theorem]{Lemma}
\newtheorem{proposition}[theorem]{Proposition}
\newtheorem{corollary}[theorem]{Corollary}

\newtheorem{example}[theorem]{Example}

\theoremstyle{definition}
\newtheorem*{definition*}{Definition}
\newtheorem{definition}[theorem]{Definition}

\theoremstyle{remark}
\newtheorem*{remark*}{Remark}
\newtheorem{remark}[theorem]{Remark}

\theoremstyle{conjecture}
\newtheorem*{conjecture*}{Conjecture}

\numberwithin{equation}{section}


\newcommand{\myproof}[2]{Proof of {#1} {#2}}



\setcounter{tocdepth}{4}
\setcounter{secnumdepth}{4}
\begin{document}

\title[Hidden $\operatorname{Sp}(1)$-symmetry and Brane Quantization on HyperK\"ahler Manifolds]{Hidden $\operatorname{Sp}(1)$-symmetry and Brane Quantization on HyperK\"ahler Manifolds}

\author{NaiChung Conan Leung, AND YuTung Yau}
\address{The Institute of Mathematical Sciences and Department of Mathematics, The Chinese University of Hong Kong, Shatin, Hong Kong}
\email{leung@math.cuhk.edu.hk}
\address{The Institute of Mathematical Sciences and Department of Mathematics, The Chinese University of Hong Kong, Shatin, Hong Kong}
\email{ytyau@math.cuhk.edu.hk}

\thanks{}

\maketitle

\begin{abstract}
	For a fixed prequantum line bundle $L$ over a hyperK\"ahler manifold $X$, we find a natural $\operatorname{Sp}(1)$-action on $\Omega^*(X, L)$ intertwining a twistor family of $\operatorname{Spin}^{\operatorname{c}}$-Dirac Laplacians on the spaces of $L$-valued $(0, *)$-forms on $X$, noting that $L$ is holomorphic for only one complex structure in the twistor family. This establishes a geometric quantization of $X$ via Gukov-Witten brane quantization and leads to a proposal of a mathematical definition of $\operatorname{Hom}(\overline{\mathcal{B}}_{\operatorname{cc}}, \mathcal{B}_{\operatorname{cc}})$ for the canonical coisotropic A-brane $\mathcal{B}_{\operatorname{cc}}$ on $X$ and its conjugate brane $\overline{\mathcal{B}}_{\operatorname{cc}}$.
\end{abstract}

\section{Introduction}
A hyperK\"ahler manifold $X$ admits a family $\{J_\zeta\}_{\zeta \in \mathbb{S}^2}$ of complex structures parametrized by the unit sphere $\mathbb{S}^2$ in the Lie algebra $\mathfrak{sp}(1)$, and $\operatorname{Sp}(1)$ acts on $\mathbb{S}^2$ by its adjoint action on $\mathfrak{sp}(1)$. There induces an $\operatorname{Sp}(1)$-action on $\Omega^*(X, \mathbb{C})$ intertwining the Dolbeault Laplacians $\Delta_{\overline{\partial}_{J_\zeta}}$ (which are indeed the same), but not $\overline{\partial}_{J_\zeta}$. Verbitsky generalizes this to the case of $\Omega^*(X, E)$ for any \emph{hyperholomorphic bundle} $E$ on $X$ \cite{Ver1996} (see Theorem \ref{Theorem 3.1}), which is a Hermitian vector bundle over $X$ with a unitary connection whose curvature $F_E$ is of type $(1, 1)$ with respect to any $J_\zeta$ for $\zeta \in \mathbb{S}^2$, or equivalently, $F_E$ is an $\operatorname{Sp}(1)$-invariant form in $\Omega^2(X, \operatorname{End}(E))$. Unlike the previous case, $\Delta_{\overline{\partial}_{J_\zeta}}$ are in general not the same for different $\zeta$'s in $\mathbb{S}^2$.\par
In the present paper, instead, we consider a Hermitian vector bundle $E$ over $X$ with a unitary connection of curvature $F_E$ proportional to the K\"ahler form $\omega_J$ of a complex structure $J$ in the twistor family: $\tfrac{\sqrt{-1}}{2\pi} F_E = \omega_J \operatorname{Id}_E$. Note that $E$ is only $\pm J$-holomorphic, but it still admits an $\mathbb{S}^2$-indexed family of $\operatorname{Spin}^{\operatorname{c}}$-Dirac operators $\slashed{D}_{J_\zeta}$ on $\Omega_{J_\zeta}^{0, *}(X, E)$ induced by $J_\zeta$. Surprisingly, even though $\omega_J$ is not $\operatorname{Sp}(1)$-invariant, we still find an $\operatorname{Sp}(1)$-symmetry intertwining the Dirac Laplacian operators $\slashed{D}_{J_\zeta}^2$ on $\Omega_{J_\zeta}^{0, *}(X, E)$. More precisely, we have proved the following main result of this paper.

\begin{theorem}
	\label{Theorem 1.1}
	Let $X$ be a hyperK\"ahler manifold and $E$ be a Hermitian vector bundle over $X$ with a unitary connection of curvature $F_E$ with $\tfrac{\sqrt{-1}}{2\pi}F_E = \omega_J \operatorname{Id}_E$. Then there is an $\operatorname{Sp}(1)$-action
	\begin{align*}
		\chi: \operatorname{Sp}(1) \times \Omega^*(X, E) \to \Omega^*(X, E), \quad (\eta, s) \mapsto \eta \cdot s,
	\end{align*}
	such that for all $\eta \in \operatorname{Sp}(1)$ and $\zeta \in \mathbb{S}^2$, the following diagram commutes:
	\begin{equation*}
	\begin{tikzcd}
	\Omega_{J_\zeta}^{0, *}(X, E) \ar[d, "\eta \cdot"'] \ar[r, "\slashed{D}_{J_\zeta}^2"] & \Omega_{J_\zeta}^{0, *}(X, E) \ar[d, "\eta \cdot"]\\
	\Omega_{J_{\eta \cdot \zeta}}^{0, *}(X, E) \ar[r, "\slashed{D}_{J_{\eta \cdot \zeta}}^2"'] & \Omega_{J_{\eta \cdot \zeta}}^{0, *}(X, E)
	\end{tikzcd}
	\end{equation*}
	where $\slashed{D}_{J_\zeta}$ is the $\operatorname{Spin}^{\operatorname{c}}$-Dirac operator on $\Omega_{J_\zeta}^{0, *}(X, E)$ defined as in Definition \ref{Definition 3.2}.
\end{theorem}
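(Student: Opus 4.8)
The plan is to reduce the asserted identity of operators to a Weitzenb\"ock remainder, and then to realise $\chi$ not as the na\"ive geometric rotation of differential forms but as a \emph{parallel} identification of the twistor family of spinor bundles; the whole point will be that this identification is $\mathrm{Cl}(T^*X)$-linear, so that it transports the non-invariant curvature term without rotating it. First I would write out the Lichnerowicz--Weitzenb\"ock formula for the operator $\slashed{D}_{J_\zeta}$ of Definition \ref{Definition 3.2}. Because each $J_\zeta$ in the twistor family is K\"ahler and $X$ is Ricci-flat (being hyperK\"ahler), the formula degenerates sharply: the scalar curvature vanishes, and the curvature of the determinant line $K_{J_\zeta}^{-1}$ of the canonical $\operatorname{Spin}^{\operatorname{c}}$-structure, being a multiple of the Ricci form of $J_\zeta$, also vanishes. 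Writing $\nabla$ for the tensor-product connection built from the Levi-Civita connection on $\Lambda_{J_\zeta}^{0,*}$ and the given unitary connection on $E$, and $c_\zeta$ for Clifford multiplication on the $J_\zeta$-spinor module, this leaves
\[
\slashed{D}_{J_\zeta}^2 = \nabla^*\nabla + c_\zeta(F_E) = \nabla^*\nabla - 2\pi\sqrt{-1}\, c_\zeta(\omega_J)\operatorname{Id}_E .
\]
So it suffices to intertwine the Bochner Laplacian $\nabla^*\nabla$ and the zeroth-order operator $c_\zeta(\omega_J)$ separately under the proposed action.

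Next I would construct $\chi$. On a hyperK\"ahler manifold the complex structures $J_\zeta$ are parallel, so the spinor bundle carries a parallel structure whose realisation $\Lambda_{J_\zeta}^{0,*}$ varies over $\mathbb{S}^2$ while $\operatorname{Sp}(1)$ permutes these realisations by $\zeta \mapsto \eta\cdot\zeta$. I would \emph{define} $\chi_\eta$ (tensored with $\operatorname{Id}_E$) by the parallel Clifford-module isomorphisms $\Lambda_{J_\zeta}^{0,*} \to \Lambda_{J_{\eta\cdot\zeta}}^{0,*}$ supplied by this structure, so that by construction $\chi_\eta$ carries $\Omega_{J_\zeta}^{0,*}(X,E)$ to $\Omega_{J_{\eta\cdot\zeta}}^{0,*}(X,E)$. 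The decisive feature is that these maps are genuine $\mathrm{Cl}(T^*X)$-module isomorphisms, i.e.\ they fix the Clifford action of each covector; hence $\chi_\eta\, c_\zeta(\alpha)\, \chi_\eta^{-1} = c_{\eta\cdot\zeta}(\alpha)$ for the \emph{same} $2$-form $\alpha$, and in particular the curvature term transforms as $c_\zeta(\omega_J) \mapsto c_{\eta\cdot\zeta}(\omega_J)$, which is exactly the curvature term of $\slashed{D}_{J_{\eta\cdot\zeta}}^2$. This is the hidden symmetry: the na\"ive geometric $\operatorname{Sp}(1)$-action on forms (the one adequate for the trivial or hyperholomorphic bundle) would instead rotate $\omega_J$ among the three K\"ahler forms and \emph{fail}, precisely because $\omega_J$ is not $\operatorname{Sp}(1)$-invariant.

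It then remains to check the two pieces. The Bochner term is immediate: since $\chi_\eta$ is parallel and acts as $\operatorname{Id}_E$ on the bundle factor, it commutes with $\nabla$, hence with $\nabla^*\nabla$. The curvature term is handled by the module-naturality just described. Combining the two yields $\chi_\eta\, \slashed{D}_{J_\zeta}^2 = \slashed{D}_{J_{\eta\cdot\zeta}}^2\, \chi_\eta$, which is the commuting square. I expect the main obstacle to lie not in this final verification but in the construction itself: one must show that the pointwise Clifford-module identifications assemble into a single, globally defined and smooth $\operatorname{Sp}(1)$-action on $\Omega^*(X,E)$ obeying the group law. This means controlling the $U(1)$-stabiliser ambiguity of each polarisation $\Lambda_{J_\zeta}^{0,*}$ together with the $\operatorname{Spin}$-double-cover signs, and -- conceptually -- pinning down why the transport that fixes Clifford arguments, rather than the geometric rotation that moves them, is the one that survives the presence of the non-invariant curvature $\tfrac{\sqrt{-1}}{2\pi}F_E = \omega_J\operatorname{Id}_E$.
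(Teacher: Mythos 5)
Your reduction is exactly the paper's: the Lichnerowicz formula plus Ricci-flatness of the hyperK\"ahler metric gives $\slashed{D}_{J_\zeta}^2 = \nabla^*\nabla - 2\pi\sqrt{-1}\,c_\zeta(\omega_J)$ (this is precisely how the paper's Lemma \ref{Lemma 3.5} begins), and you have correctly identified the mechanism that must operate: the action has to conjugate the module structure $c_\zeta(\cdot)$ into $c_{\eta\cdot\zeta}(\cdot)$ while \emph{fixing} the argument $\omega_J$, rather than rotating $\omega_J$ as the hypercomplex action $\rho^{\operatorname{Sp}(1)}$ of (\ref{Equation 2.2}) would. The genuine gap is that you never construct the action. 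You \emph{define} $\chi_\eta$ as ``the parallel Clifford-module isomorphisms supplied by'' a parallel structure on the twistor family of spinor bundles, and then concede that proving these pointwise, Schur-unique-up-to-scalar identifications assemble into a globally defined, smooth $\operatorname{Sp}(1)$-action with consistent phases is ``the main obstacle.'' But that existence statement \emph{is} the theorem; everything after it is the easy half. There is also a well-definedness issue in your formulation: the subspaces $\Omega_{J_\zeta}^{0,*}(X,E)$ for different $\zeta$ overlap (they all contain $\Omega^0(X,E)$), and the Clifford intertwiner attached to $\zeta$ sends the Clifford vacuum to the pure-spinor line of $T_{J_\zeta}^{*(1,0)}X$ inside $\bigwedge T_{J_{\eta\cdot\zeta}}^{*(0,1)}X$, which depends on $\zeta$; so prescribing $\chi_\eta$ fibrewise on each $(0,*)$-bundle does not by itself produce one operator on $\Omega^*(X,E)$, let alone one obeying the group law.

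The paper fills exactly this gap by exhibiting the intertwiners inside the globally defined $\operatorname{Sp}(4,\mathbb{C})$-representation of Proposition \ref{Proposition 2.1}, where smoothness, phases and the composition law are automatic. Concretely, with $\rho = \rho^{\operatorname{Sp}(1)}$ the hypercomplex action of (\ref{Equation 2.2}) and $\rho_\mathbf{j} = \rho_\mathbf{j}^{\operatorname{Sp}(1)}$ the exponentiated Clifford action of $\omega_I, \omega_J, \omega_K$ (Proposition \ref{Proposition 2.5}), one takes $\chi(\eta) = \rho(\alpha_{\eta\cdot\zeta})^{-1}\circ\rho_\mathbf{j}(\eta)\circ\rho(\alpha_\zeta)$ with $\alpha_\zeta\cdot\zeta=\mathbf{j}$. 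Lemma \ref{Lemma 3.6} shows $\nabla^*\nabla$ commutes with the whole $\operatorname{Sp}(4,\mathbb{C})$-action (because the K\"ahler forms are parallel); Proposition \ref{Proposition 2.6} then shows $\rho(\alpha_\zeta)$ carries $\slashed{D}_{J_\zeta}^2$ to $\tilde{\Delta}_\zeta = \nabla^*\nabla - 2\pi\sqrt{-1}\,c_\mathbf{j}(\omega_\zeta)$ on the fixed space $\Omega_J^{0,*}(X,E)$ (Lemma \ref{Lemma 3.5}); and Proposition \ref{Proposition 2.5} shows conjugation by $\rho_\mathbf{j}(\eta)$ rotates $c_\mathbf{j}(\omega_\zeta)$ to $c_\mathbf{j}(\omega_{\eta\cdot\zeta})$ (Lemma \ref{Lemma 3.7}). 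The net effect of this composite on Clifford arguments is a rotation stabilizing $\mathbf{j}$, hence fixing $\omega_J$ --- precisely the mechanism you envisaged, but realized by explicit representations rather than abstract fibrewise identifications. If you wanted to salvage your route directly, you would still need to prove existence of a parallel unit section of the intertwiner line (e.g.\ via the $\operatorname{Sp}(n)$-holonomy reduction, Schur's lemma, and the absence of nontrivial characters of $\operatorname{Sp}(n)$) \emph{and} arrange the phase normalizations so that $\eta \mapsto \chi_\eta$ is an honest action; that work, which you defer, is what the paper's formulas accomplish.
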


Note that the above theorem does not require the assumption that $X$ is compact. Also, the $\operatorname{Sp}(1)$-symmetry $\chi$ in this theorem is different from the one for hyperholomorphic bundles where $\mathbb{Z}$-gradings are preserved - only $\mathbb{Z}_2$-gradings are preserved in Theorem \ref{Theorem 1.1}. The proof of Theorem \ref{Theorem 1.1} relies on Verbitsky's results \cite{Ver1990, Ver1996, Ver2002, Ver2007} in hyperK\"ahler geometry, especially that there is an $\operatorname{Sp}(4, \mathbb{C})$-action on the space $\Omega^*(X, E)$ of $E$-valued forms on $X$. We need to find out an appropriate subgroup action of $\operatorname{Sp}(1)$ and apply the Lichnerowicz formula to prove Theorem \ref{Theorem 1.1}. Our idea of proof can also be used to reprove Theorem \ref{Theorem 3.1}.\par
A direct consequence of Theorem \ref{Theorem 1.1} is a vanishing theorem on an $\mathbb{S}^2$-indexed family of $\operatorname{Spin}^{\operatorname{c}}$-Dirac operators, and its proof is due to Kodaira Vanishing Theorem.

\begin{corollary}
	\label{Corollary 1.2}
	With the same assumption as in Theorem \ref{Theorem 1.1}, for all $\zeta \in \mathbb{S}^2$, we have
	\begin{align*}
	\ker \slashed{D}_{J_\zeta}^- = 0,
	\end{align*}
	where $\slashed{D}_{J_\zeta}^-$ is the restriction of $\slashed{D}_{J_\zeta}$ on the odd degree component $\Omega_{J_\zeta}^{0, 2*+1}(X, E)$.
\end{corollary}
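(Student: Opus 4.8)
The plan is to deduce the vanishing for every $\zeta$ from a single, well-chosen point of $\mathbb{S}^2$ by exploiting the $\operatorname{Sp}(1)$-symmetry of Theorem \ref{Theorem 1.1}. Since $\slashed{D}_{J_\zeta}$ is formally self-adjoint we have $\ker \slashed{D}_{J_\zeta} = \ker \slashed{D}_{J_\zeta}^2$, and because the vertical arrows $\eta\cdot$ in the diagram of Theorem \ref{Theorem 1.1} are isomorphisms preserving the $\mathbb{Z}_2$-grading of $\Omega_{J_\zeta}^{0,*}(X,E)$ (as in the remark following that theorem), each $\eta\cdot$ restricts to an isomorphism $\ker \slashed{D}_{J_\zeta}^- \xrightarrow{\sim} \ker \slashed{D}_{J_{\eta\cdot\zeta}}^-$ of odd-degree harmonic spinors. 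The adjoint action of $\operatorname{Sp}(1)$ on $\mathbb{S}^2 \subset \mathfrak{sp}(1)$ factors through the double cover $\operatorname{Sp}(1) \to \operatorname{SO}(3)$ and is therefore transitive, so for any $\zeta$ there is $\eta$ with $J_{\eta\cdot\zeta} = J$. It thus suffices to prove $\ker \slashed{D}_J^- = 0$.

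At the distinguished parameter the curvature $F_E$ is of type $(1,1)$ with respect to $J$, so $\overline{\partial}_E := \nabla^{0,1}$ endows $E$ with a holomorphic structure, and by the K\"ahler identities (equivalently, the Lichnerowicz formula for the canonical $\operatorname{Spin}^{\operatorname{c}}$ structure of Definition \ref{Definition 3.2}) one has $\slashed{D}_J^2 = 2\Delta_{\overline{\partial}_E}$. Hodge theory then identifies $\ker \slashed{D}_J^-$ with $\bigoplus_{q\ \mathrm{odd}} H^q(X, \mathcal{O}(E))$, up to the harmless tensoring by a square root of $K_X$ hidden in the $\operatorname{Spin}^{\operatorname{c}}$ structure.

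Finally I would invoke the Kodaira--Nakano vanishing theorem. The hypothesis $\tfrac{\sqrt{-1}}{2\pi}F_E = \omega_J \operatorname{Id}_E$ says precisely that $E$ is a positive (Nakano-positive) bundle, its curvature being the positive $(1,1)$-form $\omega_J$ times the identity; and since $X$ is hyperK\"ahler its canonical bundle $K_X$ is holomorphically trivial, so $K_X \otimes E \cong E$ (and any $K_X^{1/2}$-twist is at worst $2$-torsion and preserves positivity). Kodaira--Nakano vanishing then gives $H^q(X, \mathcal{O}(E)) = H^q(X, K_X \otimes E) = 0$ for all $q \geq 1$, in particular for odd $q$, whence $\ker \slashed{D}_J^- = 0$ and the corollary follows.

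The main obstacle, as I see it, is the book-keeping at the reduction step rather than the vanishing itself: one must be sure that the $\operatorname{Sp}(1)$-action genuinely preserves the even/odd decomposition so that the odd kernels correspond, and that $\slashed{D}_J$ is exactly the Dolbeault--Dirac operator so that $\ker \slashed{D}_J$ is computed by Dolbeault cohomology. A secondary point is that Hodge theory and Kodaira--Nakano require compactness of $X$ (or, in the non-compact case, a completeness/$L^2$ hypothesis together with the Bochner technique applied to the Lichnerowicz formula, using that $X$ is Ricci-flat); this is the place where an assumption beyond those of Theorem \ref{Theorem 1.1} must enter.
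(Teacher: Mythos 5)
Your argument is correct and shares its overall skeleton with the paper's proof: both first reduce to the single complex structure $J$ using that the $\operatorname{Sp}(1)$-action of Theorem \ref{Theorem 1.1} preserves the $\mathbb{Z}_2$-grading while intertwining the Dirac Laplacians (together with $\ker \slashed{D}_{J_\zeta} = \ker \slashed{D}_{J_\zeta}^2$), and both then deduce the vanishing at $J$ from positivity of $F_E$. The difference is in how that vanishing is implemented. The paper applies Akizuki--Nakano's Bochner argument directly to $E$-valued $(2n,q)$-forms, where the curvature term in the Bochner--Kodaira identity is positive, and then transports the conclusion down to $(0,q)$-forms via the operator $e^{-\frac{\pi}{2}(L_{\Omega} - \Lambda_{\Omega})}: \Omega_J^{0,q}(X,E) \to \Omega_J^{2n,q}(X,E)$, which commutes with $\slashed{D}_J^2$ by Lemma \ref{Lemma 3.14}; this keeps the entire argument at the level of kernels of the Dirac Laplacian and reuses a lemma the paper proves anyway for Theorem \ref{Theorem 3.10}. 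You instead identify $\ker \slashed{D}_J^2 \cap \Omega_J^{0,q}(X,E)$ with the Dolbeault cohomology $H^q(X,\mathcal{O}(E))$ by Hodge theory and conclude by Kodaira--Nakano vanishing combined with the holomorphic triviality of $K_X$ (via $\Omega^n$). Both routes rest on the same two facts --- Nakano positivity of $E$ and triviality of the canonical bundle of a hyperK\"ahler manifold --- but yours passes through sheaf cohomology with entirely standard inputs, whereas the paper's Lemma \ref{Lemma 3.14} is precisely the harmonic-form incarnation of the isomorphism $H^q(X,E) \cong H^q(X, K_X \otimes E)$ that you invoke; your route is more classical and self-contained for a reader, the paper's avoids any explicit appeal to the Dolbeault cohomology identification.

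Two minor points. Your caveat about a hidden $K_X^{1/2}$-twist is unnecessary: with Definition \ref{Definition 3.2} one has exactly $\slashed{D}_J = \sqrt{2}(\overline{\partial}_J + \overline{\partial}_J^*)$ on $\Omega_J^{0,*}(X,E)$ (the displayed identity following Definition \ref{Definition 3.2}), with no square-root twist --- though, as you say, this is moot since $K_X$ is trivial. And your compactness remark is apt: the paper's own proof (the Akizuki--Nakano step, as well as $\ker \slashed{D}_{J_\zeta} = \ker \slashed{D}_{J_\zeta}^2$) equally requires compactness of $X$ or an $L^2$-completeness hypothesis, even though Corollary \ref{Corollary 1.2} nominally carries only the hypotheses of Theorem \ref{Theorem 1.1}; so the extra assumption you flag is needed by both arguments, not only by yours.
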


The index of $\slashed{D}_{J_\zeta}$ is defined as the formal difference $\operatorname{ind} \slashed{D}_{J_\zeta} := \ker \slashed{D}_{J_\zeta}^+ - \ker \slashed{D}_{J_\zeta}^-$, where $\slashed{D}_{J_\zeta}^+$ is the restriction of $\slashed{D}_{J_\zeta}$ on the even degree component $\Omega_{J_\zeta}^{0, 2*}(X, E)$. Hence, this corollary implies that $\operatorname{ind} \slashed{D}_{J_\zeta}$ is an honest vector space and the family $\{ \operatorname{ind} \slashed{D}_{J_\zeta} \}_{\zeta \in \mathbb{S}^2}$ is $\operatorname{Sp}(1)$-equivariant.\par
While there are similarities between Theorem \ref{Theorem 3.1} proved by Verbitsky and Theorem \ref{Theorem 1.1}, we find a unified physical interpretation of them. In particular, Theorem \ref{Theorem 1.1} leads to a proposed definition of the morphism spaces for certain pairs of coisotropic A-branes.

\subsection{Studying morphism spaces between coisotropic A-branes via $\operatorname{Sp}(1)$-symmetry}
\label{Subsection 1.1}
\quad\par
\emph{Coisotropic A-branes} are discovered by Kapustin-Orlov \cite{KapOrl2003} as natural boundary conditions in A-model and are studied so as to understand mirror symmetry. Since then, a number of attempts to give morphism spaces between coisotropic A-branes a rigorous mathematical definition have been emerging, e.g. \cite{AldZas2005, BisGua2022, KapLi2005, KapOrl2004, Qin2020, Qui2012}. While this problem remains challenging, these morphism spaces are found to be important for other areas as well. Considering A-model for $(X, \omega_K)$, Gukov-Witten \cite{GukWit2009} realized that $\operatorname{Hom}_{\omega_K}(\mathcal{B}_{\operatorname{cc}}, \mathcal{B}_{\operatorname{cc}})$ and $\operatorname{Hom}_{\omega_K}(\mathcal{B}, \mathcal{B}_{\operatorname{cc}})$ are related to deformation quantization and geometric quantization respectively, where $\mathcal{B}_{\operatorname{cc}}$ is the space-filling brane on $X$ with Chan-Paton bundle $L$ such that $c_1(L) = [\omega_J]$ and $\mathcal{B}$ is a brane supported on a $J$-holomorphic Lagrangian submanifold $M$ of $X$ with trivial Chan-Paton bundle, leading to the work of Bischoff-Gualtieri \cite{BisGua2022} on defining $\operatorname{Hom}_{\omega_K}(\mathcal{B}, \mathcal{B}_{\operatorname{cc}})$. The works of Gaiotto-Witten \cite{GaiWit2022} and Etingof-Frenkel-Kazhdan \cite{EtiFreKaz2021} reveal that $\operatorname{Hom}_{\omega_K}(\overline{\mathcal{B}}_{\operatorname{cc}}, \mathcal{B}_{\operatorname{cc}})$ is related to the analytic geometric Langlands program, where $\overline{\mathcal{B}}_{\operatorname{cc}}$ is the brane with Chan-Paton bundle $L^\vee$. The $(\overline{\mathcal{B}}_{\operatorname{cc}}, \mathcal{B}_{\operatorname{cc}})$-system was studied by Aldi-Zaslow \cite{AldZas2005} in the special case when $X$ is a $4$-dimensional affine torus.\par
While hyperholomorphic bundles over hyperK\"ahler submanifolds of $X$ are regarded as \emph{(B, B, B)-branes} in physics, the aforementioned branes $\mathcal{B}_{\operatorname{cc}}, \overline{\mathcal{B}}_{\operatorname{cc}}, \mathcal{B}$ are all \emph{(A, B, A)-branes} \cite{GukWit2009}, i.e. objects which are simultaneously branes with respect to $\mathcal{J}_\zeta$ for all $\zeta \in \mathbb{S}^2$, where $\{\mathcal{J}_\zeta\}_{\zeta \in \mathbb{S}^2}$ is the $\mathbb{S}^2$-indexed family of generalized complex structures determined by the triple $(\omega_I, J, \omega_K)$ \cite{Gua2011, HonSti2015} (see Example \ref{Example A.8}). It is explained physically in Section 2.3 in \cite{GukWit2009} (and also Section 3.9 in \cite{GaiWit2022}) that for $X$ being hyperK\"ahler and $M$ being compact, the morphism spaces for the $(\mathcal{B}, \mathcal{B}_{\operatorname{cc}})$-system with respect to $J$ and $\omega_K$ coincide. Indeed, (B, B, B)-branes and (A, B, A)-branes are two basic examples of \emph{hyperbranes} on $X$ in the sense of Definition \ref{Definition 4.1}, where $X$ is equipped with different generalized hyperK\"ahler structures arising from the single hyperK\"ahler structure. We expect that
\begin{itemize}
	\item \emph{for a pair of hyperbranes on a generalized hyperK\"ahler manifold such that the intersection of their supports is compact, there is an $\operatorname{Sp}(1)$-symmetry intertwining morphism spaces among them.}
\end{itemize}\par
Theorem \ref{Theorem 3.1} proved by Verbitsky verifies our expectation on space-filling (B, B, B)-branes, where their B-model morphism spaces are mathematically defined in terms of derived categories of coherent sheaves \cite{CalKatSha2003, KatSha2002} and computed by sheaf cohomologies. Morphism spaces for (A, B, A)-branes are more subtle. By Proposition 3.27 in \cite{Gua2011} (see also \cite{HonSti2015}), $\mathcal{J}_{\pm \mathbf{j}}$ corresponds to a B-model and apart from $\zeta = \pm \mathbf{j}$, $\mathcal{J}_\zeta$ is the B-field transform of a generalized complex structure induced by a symplectic form on $X$, corresponding to an A-model. In general, we have no a priori mathematical definitions of the A-model morphism spaces for a pair of (A, B, A)-branes. For the $(\overline{\mathcal{B}}_{\operatorname{cc}}, \mathcal{B}_{\operatorname{cc}})$-system, we propose a definition (Definition \ref{Definition 4.2}) of its A-model morphism spaces. Corollary \ref{Corollary 1.2} can be interpreted as establishing an $\operatorname{Sp}(1)$-symmetry intertwining the $\mathbb{S}^2$-indexed family of morphism spaces for the $(\overline{\mathcal{B}}_{\operatorname{cc}}, \mathcal{B}_{\operatorname{cc}})$-system. Our expectation on certain other pairs of (A, B, A)-branes, including the $(\mathcal{B}_{\operatorname{cc}}, \mathcal{B}_{\operatorname{cc}})$-system, the $(\mathcal{B}, \mathcal{B}_{\operatorname{cc}})$-system and $(\mathcal{B}_0, \mathcal{B}_1)$-systems with supports of $\mathcal{B}_0, \mathcal{B}_1$ being $J$-holomorphic Lagrangian, will be discussed in Section \ref{Section 4}. Overall, we see that this $\operatorname{Sp}(1)$-symmetry is crucial as it helps us to understand morphism spaces between coisotropic A-branes via mathematically defined morphism spaces between B-branes.

\subsection{Brane quantization on hyperK\"ahler manifolds}
\quad\par
A further physical implication of our main result is relevant to Gukov-Witten brane quantization \cite{GukWit2009}, a new recipe to obtain quantum Hilbert spaces. Geometric quantization on a symplectic manifold $(M, \omega_M)$ requires a polarization to obtain a quantum Hilbert space $\mathcal{H}_{\operatorname{GQ}}$. Instead, brane quantization requires a \emph{complexification}, which is a set of data
\begin{equation*}
	(Y, \Omega, L, \tau),
\end{equation*}
where $(Y, \Omega)$ is a holomorphic symplectic manifold, $L$ is a Hermitian line bundle over $Y$ with a unitary connection of curvature $F_L$ with $\tfrac{\sqrt{-1}}{2\pi}F_L = \operatorname{Re} \Omega$ and $\tau: Y \to Y$ is an antiholomorphic involution with $\tau^*\Omega = \overline{\Omega}$ such that $M$ is embedded in $Y$ as a component of the fixed point set of $\tau$ with $\Omega \vert_M = \omega_M$ and the action of $\tau$ on $Y$ lifts to an action on $L$, restricting to the identity on $M$. In this recipe, a quantum Hilbert space is obtained by the A-model morphism space $\operatorname{Hom}_{\operatorname{Im}\Omega}( \mathcal{B}, \mathcal{B}_{\operatorname{cc}} )$, to be mathematically defined, where $\mathcal{B}, \mathcal{B}_{\operatorname{cc}}$ are A-branes on $(Y, \operatorname{Im}\Omega)$ defined similarly to those on the hyperK\"ahler manifold $X$. If the holomorphic symplectic structure on $Y$ extends to a hyperK\"ahler structure and $M$ is compact, then $\operatorname{Hom}_{\operatorname{Im}\Omega}( \mathcal{B}, \mathcal{B}_{\operatorname{cc}} )$ should be isomorphic to $\mathcal{H}_{\operatorname{GQ}}$ as vector spaces.\par
Our main result is related to geometric quantization on the hyperK\"ahler manifold $X$. $\operatorname{Sp}(1)$-symmetry for geometric quantization on $X$ was studied by Andersen-Malus\`a-Rembado in \cite{AndMalRem2022} under the assumption that there is an isometric $\operatorname{Sp}(1)$-action on $X$ which transitively permutes the K\"ahler forms. We, instead, assume $X$ is compact and $L$ is a Hermitian line bundle with a unitary connection of curvature $F_L$ with $\tfrac{\sqrt{-1}}{2\pi}F_L = \omega_J$. Following Kostant’s cohomological approach \cite{Kos1975}, the quantum Hilbert space of $(X, 2\omega_J)$ in the (pseudo)K\"ahler polarization $T_{\pm J}^{0, 1}X$ is given by
\begin{equation*}
\mathcal{H}_{\pm J} := H_{\overline{\partial}_{\pm J}}^{0, *}(X, L^{\otimes 2}).
\end{equation*}\par
The symplectic manifold $(X, 2\omega_J)$ admits a complexification as follows so that we can perform brane quantization. We take the product complex manifold $(\widehat{X}, \widehat{I}) = (X, -I) \times (X, I)$ equipped with the holomorphic symplectic form $\omega_{\widehat{J}} + \sqrt{-1} \omega_{\widehat{K}}$, where $\omega_{\widehat{J}} = \pi_1^*\omega_J + \pi_2^*\omega_J$, $\omega_{\widehat{K}} = -\pi_1^*\omega_K + \pi_2^*\omega_K$ and $\pi_1, \pi_2: \widehat{X} \to X$ are the projections on the first and the second factor respectively. Indeed, $\widehat{X}$ is hyperK\"ahler with the metric $\widehat{g} = \pi_1^*g + \pi_2^*g$. The required Hermitian line bundle with unitary connection is $\widehat{L} := L \boxtimes L$, and $X$ is embedded into $\widehat{X}$ as the diagonal. An involution on $\widehat{X}$ and its lifted action on $\widehat{L}$ are defined by swapping variables in the two factors.\par
This complexification $\widehat{X}$ admits two (A, B, A)-branes: the brane $\widehat{\mathcal{B}}$ supported on $X$ with trivial Chan-Paton bundle and the space-filling brane $\widehat{\mathcal{B}}_{\operatorname{cc}}$ with Chan-Paton bundle $\widehat{L}$. We can compare the $(\widehat{\mathcal{B}}, \widehat{\mathcal{B}}_{\operatorname{cc}})$-system with the $(\overline{\mathcal{B}}_{\operatorname{cc}}, \mathcal{B}_{\operatorname{cc}})$-system via Gaiotto-Witten \emph{folding trick} \cite{GaiWit2022}: this trick gives a physical expectation that
\begin{equation}
\label{Equation 1.1}
\operatorname{Hom}_{\omega_K}(\overline{\mathcal{B}}_{\operatorname{cc}}, \mathcal{B}_{\operatorname{cc}}) \cong \operatorname{Hom}_{\omega_{\widehat{K}}}(\widehat{\mathcal{B}}, \widehat{\mathcal{B}}_{\operatorname{cc}}).
\end{equation}
Indeed, $(\widehat{X}, \omega_{\widehat{K}})$ is the symplectic groupoid of $(X, \omega_K)$, whence (\ref{Equation 1.1}) also matches the proposal of defining morphisms between generalized complex branes by Bischoff-Gualtieri \cite{BisGua2022}. Eventually, adopting Definition \ref{Definition 4.2}, we have the following isomorphism by Hodge theory and Corollary \ref{Corollary 1.2},
\begin{equation*}
\operatorname{Hom}_{\omega_K}(\overline{\mathcal{B}}_{\operatorname{cc}}, \mathcal{B}_{\operatorname{cc}}) \cong \mathcal{H}_{\pm J}.
\end{equation*}
Thus, if the physical folding trick makes mathematical sense, then our main result verifies the relationship between geometric quantization and brane quantization on a hyperK\"ahler manifold.\par
\quad\par
The paper is organized as follows. In Section \ref{Section 2}, we shall review an $\operatorname{Sp}(4, \mathbb{C})$-symmetry on an arbitrary Hermitian vector bundle over $X$. In Section \ref{Section 3}, we shall be devoted to proofs of the theorems. In Section \ref{Section 4}, we shall introduce the notion of hyperbranes and discuss $\operatorname{Sp}(1)$-symmetries on morphism spaces between hyperbranes on a generalized hyperK\"ahler manifold in more details.

\subsection{Acknowledgement}
\quad\par
We thank Marco Gualtieri, Eric Sharpe, Misha Verbitsky and Eric Zaslow for useful comments and suggestions. This research was substantially supported by grants from the Research Grants Council of the Hong Kong Special Administrative Region, China (Project No. CUHK14306720 and CUHK14301721) and direct grants from the Chinese University of Hong Kong.

\section{$\operatorname{Sp}(4, \mathbb{C})$-Symmetry for a HyperK\"ahler Manifold}
\label{Section 2}
The proof of Theorem \ref{Theorem 1.1} requires our understanding on an $\operatorname{Sp}(4, \mathbb{C})$-symmetry arising from the hyperK\"ahler structure on $X$. In this section, we shall give a quick review on this symmetry and set up notations for proving our main theorem.\par
It was first discovered by Verbitsky \cite{Ver1990} that there is a complex representation of $\mathfrak{so}(5, \mathbb{C}) \cong \mathfrak{sp}(4, \mathbb{C})$ on $\Omega^*(X, \mathbb{C})$, noting that $\mathfrak{sp}(4, \mathbb{C})$ is the complexification of $\mathfrak{so}(4, 1) \cong \mathfrak{sp}(1, 1)$ (see also \cite{CaoZho2005, LeuLi2008}). Indeed, this representation comes from a fibrewise Lie algebra action and this can be generalized to the case of $\Omega^*(X, E)$ for a Hermitian vector bundle $E$: we have a fibrewise Lie algebra action $X \times \mathfrak{sp}(4, \mathbb{C}) \to \operatorname{End}(\textstyle\bigwedge T^*X_\mathbb{C} \otimes E)$, which integrates to a fibrewise Lie group action $X \times \operatorname{Sp}(4, \mathbb{C}) \to \operatorname{End}(\textstyle\bigwedge T^*X_\mathbb{C} \otimes E)$ as $\operatorname{Sp}(4, \mathbb{C})$ is simply connected.\par
To describe the above actions explicitly, we introduce the following operators on $\Omega^*(X, E)$. Recall that $\{J_\zeta\}_{\zeta \in \mathbb{S}^2}$ denotes the twistor family of complex structures on $X$. For each $\zeta \in \mathbb{S}^2$, define the operator $\operatorname{ad}(J_\zeta)$ on $\Omega^*(X, E)$ as $\operatorname{ad}(J_\zeta) s = (p-q)\sqrt{-1}s$ for all $p, q \in \mathbb{N}$ and sections $s \in \Omega_{J_\zeta}^{p, q}(X, E)$. Denote by $\omega_\zeta$ the K\"ahler form of $J_\zeta$. Let $L_{\omega_\zeta}$ be the Lefschetz operator associated to $\omega_\zeta$, acting on $\Omega^*(X, E)$, and $\Lambda_{\omega_\zeta}$ be the Hermitian adjoint of $L_{\omega_\zeta}$. Finally, define a linear operator $H$ on $\Omega^*(X, E)$ as $Hs = (k-2n)s$ for $k \in \mathbb{N}$ and $s \in \Omega^k(X, E)$, where $4n = \dim_\mathbb{R} X$. 

\begin{proposition}
	\label{Proposition 2.1}
	There is a complex representation $\rho^{\mathfrak{sp}(4, \mathbb{C})}$ of the complex Lie algebra $\mathfrak{sp}(4, \mathbb{C})$ on $\Omega^*(X, E)$, where $\mathfrak{sp}(4, \mathbb{C})$ is embedded into the $\mathbb{C}$-vector space of zeroth order differential operators on $\Omega^*(X, E)$ spanned by the following $10$ operators
	\begin{equation}
	\label{Equation 2.1}
	L_{\omega_I}, L_{\omega_J}, L_{\omega_K}, \Lambda_{\omega_I}, \Lambda_{\omega_J}, \Lambda_{\omega_K}, \operatorname{ad}(I), \operatorname{ad}(J), \operatorname{ad}(K), H.
	\end{equation}
	It integrates to a complex representation $\rho^{\operatorname{Sp}(4, \mathbb{C})}$ of the Lie group $\operatorname{Sp}(4, \mathbb{C})$ on $\Omega^*(X, E)$.
\end{proposition}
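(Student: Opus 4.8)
The plan is to reduce the statement to a pointwise problem in linear algebra. Each of the ten operators listed in \eqref{Equation 2.1} is a zeroth-order differential operator, i.e. a bundle endomorphism of $\bigwedge T^*X_\mathbb{C} \otimes E$, so all of their commutators are again zeroth-order, and it suffices to verify the defining bracket relations of $\mathfrak{sp}(4, \mathbb{C})$ fibrewise. Moreover, every operator in the list acts only on the $\bigwedge T^*X_\mathbb{C}$-factor and commutes with the $E$-factor; hence the coefficient bundle $E$ is a spectator and we may as well take $E$ trivial, recovering exactly the situation of $\Omega^*(X, \mathbb{C})$ originally treated by Verbitsky. Fixing a point $x \in X$, the space $V := T_xX$ is a quaternionic vector space carrying the three compatible complex structures $I, J, K$ and a compatible inner product, and the whole problem becomes the analysis of the ten operators acting on $\bigwedge V^*_\mathbb{C}$.

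First I would record the relations that hold within each Kähler structure and within the rotation part. For each $\zeta \in \{I, J, K\}$ the triple $(L_{\omega_\zeta}, \Lambda_{\omega_\zeta}, H)$ satisfies the standard Lefschetz $\mathfrak{sl}(2, \mathbb{C})$-relations $[L_{\omega_\zeta}, \Lambda_{\omega_\zeta}] = H$, $[H, L_{\omega_\zeta}] = 2 L_{\omega_\zeta}$ and $[H, \Lambda_{\omega_\zeta}] = -2\Lambda_{\omega_\zeta}$, the grading operator $H$ being common to all three; this is pure Kähler linear algebra applied to each complex structure, and the normalization $Hs = (k - 2n)s$ is precisely what makes $[L_{\omega_\zeta}, \Lambda_{\omega_\zeta}] = H$ hold. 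Next, the operators $\operatorname{ad}(I), \operatorname{ad}(J), \operatorname{ad}(K)$ generate a copy of $\mathfrak{su}(2)$, whose complexification is $\mathfrak{sl}(2, \mathbb{C})$, with brackets governed by the quaternion relations $I^2 = J^2 = K^2 = IJK = -1$, and they commute with $H$.

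The heart of the matter, and the step I expect to be the main obstacle, is the verification of the mixed relations: the brackets of Lefschetz operators attached to \emph{different} complex structures, such as $[\Lambda_{\omega_I}, L_{\omega_J}]$, and the brackets between the Lefschetz operators and the $\operatorname{ad}$-operators, such as $[\operatorname{ad}(I), L_{\omega_J}]$. These are Verbitsky's hyperKähler analogues of the Kähler identities, and one computes them explicitly on $\bigwedge V^*_\mathbb{C}$ using the quaternionic action of $I, J, K$ together with the compatibility of $\omega_I, \omega_J, \omega_K$ with the single metric on $V$. The delicate point is the bookkeeping of signs and constants, so that each such bracket lands back in the ten-dimensional span — producing, for example, an $\operatorname{ad}$-operator out of two mismatched Lefschetz operators — rather than generating a new independent endomorphism.

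Finally, having checked that the ten operators span a Lie subalgebra of $\operatorname{End}(\bigwedge V^*_\mathbb{C})$ of dimension $10$ with the correct structure constants, I would identify it abstractly with $\mathfrak{so}(5, \mathbb{C}) \cong \mathfrak{sp}(4, \mathbb{C})$: for instance by choosing a Cartan subalgebra spanned by $H$ and $\operatorname{ad}(I)$, matching the resulting root system of rank $2$, or by invoking the known low-rank isomorphism together with the signature of the Killing form, which pins down the real form $\mathfrak{so}(4, 1)$ noted in the text. This yields the representation $\rho^{\mathfrak{sp}(4, \mathbb{C})}$. To integrate it, I would use that the action is fibrewise and that $\operatorname{Sp}(4, \mathbb{C})$ is simply connected: the finite-dimensional representation on each fibre $\bigwedge T^*_xX_\mathbb{C} \otimes E_x$ exponentiates to a representation of $\operatorname{Sp}(4, \mathbb{C})$, and these fit together into a smooth fibrewise group action $\rho^{\operatorname{Sp}(4, \mathbb{C})}$ on $\Omega^*(X, E)$.
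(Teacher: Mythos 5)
Your proposal is correct and follows essentially the same route as the paper: the paper likewise treats the ten operators as a fibrewise (zeroth-order) action on $\bigwedge T^*X_\mathbb{C} \otimes E$ in which the coefficient bundle $E$ is a spectator, reduces to Verbitsky's pointwise $\mathfrak{so}(5, \mathbb{C}) \cong \mathfrak{sp}(4, \mathbb{C})$ result for $\Omega^*(X, \mathbb{C})$, and integrates the Lie algebra action using the simple connectivity of $\operatorname{Sp}(4, \mathbb{C})$ together with the finite-dimensionality of each fibre. The only difference is that where you sketch the verification of the Lefschetz, $\mathfrak{sp}(1)$, and mixed commutation relations directly, the paper outsources exactly that linear algebra to Verbitsky's work \cite{Ver1990}.
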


The induced actions of certain subgroups of $\operatorname{Sp}(4, \mathbb{C})$ are closely related to the main theorem. In Subsection \ref{Subsection 2.1}, we shall recall the $\operatorname{Sp}(1)$-representation on $\Omega^*(X, E)$ arising from the hypercomplex structure on $X$. In Subsection \ref{Subsection 2.2}, we shall describe the $\operatorname{SL}(2, \mathbb{C})$-representations on $\Omega^*(X, E)$ induced by anti-holomorphic symplectic forms on $X$. In Subsection \ref{Subsection 2.3}, we shall discuss the $\operatorname{Sp}(1)$-actions given by Clifford actions on $\Omega^*(X, E)$.

\subsection{The $\operatorname{Sp}(1)$-action induced by the hypercomplex structure}
\quad\par
\label{Subsection 2.1}
Of the Lie algebra $\mathfrak{sp}(1, 1)$, there is a Lie subalgebra $\mathfrak{sp}(1)$ generated by $\operatorname{ad}(I), \operatorname{ad}(J), \operatorname{ad}(K)$. The induced action of this copy of $\mathfrak{sp}(1)$ integrates to an $\operatorname{Sp}(1)$-action
\begin{equation}
	\label{Equation 2.2}
	\rho^{\operatorname{Sp}(1)}: \operatorname{Sp}(1) \to \operatorname{End}(\Omega^*(X, E))
\end{equation}
determined by the condition that for any $\zeta \in \mathbb{S}^2 \subset \mathfrak{sp}(1)$, $p, q \in \mathbb{N}$ and $s \in \Omega_{J_\zeta}^{p, q}(X, E)$,
\begin{align*}
\rho^{\operatorname{Sp}(1)}(\zeta)(s) = \sqrt{-1}^{p-q}s.
\end{align*}

\subsection{The $\operatorname{SL}(2, \mathbb{C})$-actions generated by anti-holomorphic symplectic forms}
\quad\par
\label{Subsection 2.2}
In this subsection, we discuss the $\operatorname{SL}(2, \mathbb{C})$-action on $\Omega^*(X, E)$ generated by an anti-holomorphic symplectic form with respect to a complex structure $J_\zeta$ for $\zeta \in \mathbb{S}^2$, say $J_\zeta = J$, and determine the decomposition of $\Omega^*(X, E)$ into irreducible $\operatorname{SL}(2, \mathbb{C})$-representations.\par
Define the $J$-holomorphic symplectic form $\Omega = \tfrac{1}{2}(\omega_K + \sqrt{-1}\omega_I)$. Let $L_{\overline{\Omega}}$ be the Lefschetz operator of $\overline{\Omega}$, $\Lambda_{\overline{\Omega}}$ be the Hermitian adjoint of $L_{\overline{\Omega}}$, and $H_{\overline{\Omega}} = [L_{\overline{\Omega}}, \Lambda_{\overline{\Omega}}]$. By Proposition 3.1 in \cite{Ver2007}, \footnote{In \cite{Ver2007}, it is assumed that $E$ is $J$-holomorphic, but this assumption is not necessary for proving that $(L_{\overline{\Omega}}, \Lambda_{\overline{\Omega}}, H_{\overline{\Omega}})$ forms a Lefschetz triple. Indeed, the proof is due to \cite{Fuj1987} and is analogous to the proof of the usual Lefschetz Theorem about the $\mathfrak{sl}(2, \mathbb{R})$-action.} $(L_{\overline{\Omega}}, \Lambda_{\overline{\Omega}}, H_{\overline{\Omega}})$ forms a Lefschetz triple acting on $\Omega_J^{*, *}(X, E)$ and for all $k \in \mathbb{N}$ and $s \in \Omega_J^{*, k}(X, E)$, $H_{\overline{\Omega}}s = (k-n)s$. The Lie algebra $\mathfrak{sl}(2, \mathbb{C})$ generated by $L_{\overline{\Omega}}, \Lambda_{\overline{\Omega}}, H_{\overline{\Omega}}$ over $\mathbb{C}$ lies in $\mathfrak{sp}(4, \mathbb{C})$. To determine the decomposition of $\Omega^*(X, E)$ into irreducibles, we give a definition of $\overline{\Omega}$-primitivity.

\begin{definition}
	A section $s \in \Omega^*(X, E)$ is said to be $\overline{\Omega}$-\emph{primitive} if $\Lambda_{\overline{\Omega}} s = 0$.
\end{definition}

\begin{remark}
	There are choices of $J$-holomorphic symplectic forms $\Omega^{(\mu)} := \tfrac{1}{2}(\omega_\mu + \sqrt{-1}\omega_{\mathbf{j}\mu})$ parametrized by unit quaternions $\mu \in \mathfrak{sp}(1)$ orthogonal to $\mathbf{j}$, and $\Omega^{(\mu)} = e^{\sqrt{-1}\theta} \Omega$ for some $\theta \in \mathbb{R}$. In particular, $\Omega^{(\mathbf{k})} = \Omega$. These give different choices of Lefschetz triples
	\begin{align*}
		(L_{\overline{\Omega}^{(\mu)}}, \Lambda_{\overline{\Omega}^{(\mu)}}, H_{\overline{\Omega}^{(\mu)}}) = (e^{\sqrt{-1}\theta} L_{\overline{\Omega}}, e^{-\sqrt{-1}\theta} \Lambda_{\overline{\Omega}}, H_{\overline{\Omega}}),
	\end{align*}
	but the resulting $\mathfrak{sl}(2, \mathbb{C})$-representation and $\overline{\Omega}^{(\mu)}$-primitivity are independent of the choice of $\mu$.
\end{remark}

For $p, q \in \mathbb{N}$, we denote by $\Omega_{J, \operatorname{prim}}^{p, q}(X, E)$ the vector subspace of $\overline{\Omega}$-primitive sections in $\Omega_J^{p, q}(X, E)$. For $m \in \mathbb{N}$, let $U_m$ be the $(m+1)$-dimensional irreducible complex representation of $\operatorname{SL}(2, \mathbb{C})$. Now we have the main proposition in this subsection.

\begin{proposition}
	\label{Proposition 2.4}
	There is an isomorphism of complex representations of $\operatorname{SL}(2, \mathbb{C})$,
	\begin{equation}
		\label{Equation 2.3}
		\Omega^*(X, E) \cong \bigoplus_{q=0}^n U_{n-q} \otimes \Omega_{J, \operatorname{prim}}^{*, q}(X, E),
	\end{equation}
	on the right hand side of which $\operatorname{SL}(2, \mathbb{C})$ acts on $\Omega_{J, \operatorname{prim}}^{*, q}(X, E)$ trivially.
\end{proposition}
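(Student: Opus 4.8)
The plan is to recognize (\ref{Equation 2.3}) as the Lefschetz decomposition associated to the $\mathfrak{sl}(2,\mathbb{C})$-triple $(L_{\overline{\Omega}}, \Lambda_{\overline{\Omega}}, H_{\overline{\Omega}})$, and to deduce it from the representation theory of $\mathfrak{sl}(2,\mathbb{C})$ applied fibrewise. The crucial preliminary observation is that all three operators $L_{\overline{\Omega}}$, $\Lambda_{\overline{\Omega}}$, $H_{\overline{\Omega}}$ are zeroth order, i.e.\ $C^\infty(X,\mathbb{C})$-linear endomorphisms of $\bigwedge T^*X_\mathbb{C} \otimes E$ acting as $\operatorname{Id}_E$ on the $E$-factor. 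Hence the $\mathfrak{sl}(2,\mathbb{C})$-representation on $\Omega^*(X,E)$ is the space of smooth sections of a fibrewise finite-dimensional $\mathfrak{sl}(2,\mathbb{C})$-representation, and it suffices to establish the analogous decomposition of $\bigwedge T_x^*X_\mathbb{C} \otimes E_x$ at each point $x \in X$; the global statement then follows by taking sections, since the identifications below are effected by the bundle maps $L_{\overline{\Omega}}^j$ and are therefore automatically $C^\infty(X,\mathbb{C})$-linear.

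Next I would record the weight structure. Since $\overline{\Omega}$ has type $(0,2)$ with respect to $J$, the operator $L_{\overline{\Omega}}$ maps $\Omega_J^{p,q}(X,E)$ into $\Omega_J^{p,q+2}(X,E)$ and $\Lambda_{\overline{\Omega}}$ maps it into $\Omega_J^{p,q-2}(X,E)$; in particular both preserve the holomorphic degree $p$. By the Lefschetz triple property recalled above (Proposition 3.1 in \cite{Ver2007}), the triple satisfies the standard relations of $\mathfrak{sl}(2,\mathbb{C})$ with $e = L_{\overline{\Omega}}$, $f = \Lambda_{\overline{\Omega}}$, $h = H_{\overline{\Omega}}$, and $H_{\overline{\Omega}}$ acts on $\Omega_J^{*,q}(X,E)$ as multiplication by $q - n$. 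Thus the $h$-weight on $\Omega_J^{*,q}(X,E)$ equals $q - n$, ranging over $-n, -n+2, \ldots, n$ as $q$ runs over $0, 1, \ldots, 2n$, with $e = L_{\overline{\Omega}}$ raising the weight by $2$ and $f = \Lambda_{\overline{\Omega}}$ lowering it by $2$. A section is $\overline{\Omega}$-primitive precisely when it is annihilated by the lowering operator $f$, i.e.\ when it is a lowest weight vector.

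Now I would invoke complete reducibility of finite-dimensional $\mathfrak{sl}(2,\mathbb{C})$-representations. Decomposing each fibre into irreducibles $U_m$, the lowest weight space of each summand $U_m$ has weight $-m$, is one-dimensional, and is exactly the kernel of $f$ on that summand. A nonzero $\overline{\Omega}$-primitive element in $\Omega_J^{*,q}(X,E)$ has $h$-weight $q - n = -(n-q)$, so it generates a copy of $U_{n-q}$; this forces $q \le n$, which is why the sum in (\ref{Equation 2.3}) runs only over $0 \le q \le n$. Conversely, the whole representation is recovered by applying powers of $e = L_{\overline{\Omega}}$ to primitive sections. Concretely, writing $v^{(m)}_0, \dots, v^{(m)}_m$ for the standard weight basis of $U_m$ with $v^{(m)}_0$ of lowest weight, I would define the $\mathfrak{sl}(2,\mathbb{C})$-equivariant map
\begin{equation*}
\Phi: \bigoplus_{q=0}^n U_{n-q} \otimes \Omega_{J,\operatorname{prim}}^{*,q}(X,E) \to \Omega^*(X,E), \qquad v^{(n-q)}_j \otimes s \mapsto L_{\overline{\Omega}}^j s,
\end{equation*}
up to the usual normalizing constants that make it intertwine the actions, with $\operatorname{SL}(2,\mathbb{C})$ acting trivially on the primitive factors. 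That $\Phi$ is a bijection is exactly the fibrewise Lefschetz decomposition, and since $L_{\overline{\Omega}}$ and $\Lambda_{\overline{\Omega}}$ preserve $p$ and commute with $\operatorname{Id}_E$, the resulting decomposition respects both the holomorphic bigrading and the $E$-factor, as claimed.

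The input that genuinely requires hyperK\"ahler geometry, namely that $(L_{\overline{\Omega}}, \Lambda_{\overline{\Omega}}, H_{\overline{\Omega}})$ is a Lefschetz triple, is already available, so no analytic difficulty remains. The only points demanding care are organizational: verifying the weight computation $q - n$ together with the resulting index bound $q \le n$, and confirming that the fibrewise decomposition globalizes to smooth sections. The latter is immediate from the zeroth-order ($C^\infty(X,\mathbb{C})$-linear) nature of the operators, so I expect the main (and fairly modest) obstacle to be purely one of conventions: arranging the weight bookkeeping so that lowest weight primitives of anti-holomorphic degree $q$ generate precisely $U_{n-q}$.
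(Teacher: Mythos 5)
Your proof is correct, and its skeleton is the same as the paper's: primitive sections are identified as lowest-weight vectors for the triple $(L_{\overline{\Omega}}, \Lambda_{\overline{\Omega}}, H_{\overline{\Omega}})$, the summand attached to anti-holomorphic degree $q$ is swept out by powers of $L_{\overline{\Omega}}$ applied to primitives, and the intertwiner is the map $x^i y^{m-i} \otimes s \mapsto c_i\, L_{\overline{\Omega}}^i s$ with $m = n-q$ (the paper fixes $c_i = (m-i)!/m!$ and checks equivariance via the identities $\Lambda_{\overline{\Omega}} L_{\overline{\Omega}}^{i+1} s = (i+1)(m-i) L_{\overline{\Omega}}^i s$ and $H_{\overline{\Omega}} L_{\overline{\Omega}}^i s = (2i-m) L_{\overline{\Omega}}^i s$; you leave the constants implicit, which is harmless). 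The one genuine difference is where the direct-sum decomposition itself comes from: the paper simply cites Proposition 2.6 of Fujiki for $\Omega^*(X, E) = \bigoplus_{q=0}^n \bigoplus_{i=0}^{n-q} L_{\overline{\Omega}}^i \Omega_{J, \operatorname{prim}}^{*, q}(X, E)$ at the level of sections, whereas you re-derive it from complete reducibility of finite-dimensional $\mathfrak{sl}(2,\mathbb{C})$-modules applied fibrewise, justified by the zeroth-order ($\mathcal{C}^\infty$-linear) nature of the three operators. Your route is more self-contained and makes the globalization mechanism explicit; the paper's is shorter by deferring to the literature. To make your version fully airtight you should add one sentence you currently elide: the fibrewise primitive spaces $\ker \Lambda_{\overline{\Omega}} \cap \bigwedge_J^{p,q}$ have rank independent of the point (e.g.\ because the multiplicity of $U_{n-q}$ in each fibre equals $\dim \bigwedge_J^{*,q} - \dim \bigwedge_J^{*,q-2}$, a purely linear-algebraic constant), so that primitives form a genuine subbundle and your fibrewise isomorphism $\Phi$, being a bundle map, induces an isomorphism on smooth sections. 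Finally, a trivial slip: as $q$ runs over $0, \ldots, 2n$ the weight $q - n$ takes every integer value in $[-n, n]$, not only those of one parity; the step-two spacing occurs only within a single irreducible string. Neither point affects the validity of your argument.
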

\begin{proof}
	By Proposition 2.6 in \cite{Fuj1987}, the $\overline{\Omega}$-primitive decomposition on $\Omega^*(X, E)$ gives
	\begin{align*}
	\Omega^*(X, E) = \bigoplus_{q=0}^n \bigoplus_{i=0}^{n-q} L_{\overline{\Omega}}^i \Omega_{J, \operatorname{prim}}^{*, q}(X, E).
	\end{align*}
	Consider each $q \in \{0, ..., n\}$ and define $m = n - q$. The observation that for $s \in \Omega_{J, \operatorname{prim}}^{*, q}(X, E)$,
	\begin{equation}
	\label{Equation 2.4}
	\Lambda_{\overline{\Omega}} L_{\overline{\Omega}}^{i+1} s = (i+1)(m-i) L_{\overline{\Omega}}^i s \quad \text{and} \quad H_{\overline{\Omega}} L_{\overline{\Omega}}^i s = ( 2i - m ) L_{\overline{\Omega}}^i s,
	\end{equation}
	implies that $\bigoplus_{i=0}^m L_{\overline{\Omega}}^m \Omega_{J, \operatorname{prim}}^{*, q}(X, E)$ is an $\operatorname{SL}(2, \mathbb{C})$-subrepresentation of $\Omega^*(X, E)$. Define the $\mathbb{C}$-linear isomorphism
	\begin{equation*}
	\Phi: U_m \otimes \Omega_{J, \operatorname{prim}}^{*, q}(X, E) \to \bigoplus_{i=0}^{m} L_{\overline{\Omega}}^i \Omega_{J, \operatorname{prim}}^{*, q}(X, E)
	\end{equation*}
	as follows. We take the explicit model of $U_m$ that it is the vector space of homogeneous polynomials of degree $m$ in variables $x, y$ over $\mathbb{C}$. For each section $s \in \Omega_{J, \operatorname{prim}}^{*, q}(X, E)$ and $i \in \{0, ..., m\}$, define
	\begin{align*}
		\Phi( x^iy^{m-i} \otimes s ) = \tfrac{(m-i)!}{m!} L_{\overline{\Omega}}^i s.
	\end{align*}
	Then by (\ref{Equation 2.4}), we can see that for $s \in \Omega_{J, \operatorname{prim}}^{*, q}(X, E)$ and $f \in U_m$ with $fs := f \otimes s$,
	\begin{align*}
	L_{\overline{\Omega}} \Phi (fs) = \Phi (L_m f \otimes s), \quad \Lambda_{\overline{\Omega}} \Phi (fs) = \Phi (\Lambda_m f \otimes s), \quad \text{and} \quad H_{\overline{\Omega}} \Phi (fs) = \Phi (H_m f \otimes s).
	\end{align*}
	where $(L_m, \Lambda_m, H_m)$ is the Lefschetz triple given as in (\ref{Equation B.1}). We are done.
\end{proof}

\subsection{The $\operatorname{Sp}(1)$-actions induced by Clifford actions}
\quad\par
\label{Subsection 2.3}
The hypercomplex structure on $X$ gives rise to an $\mathbb{S}^2$-indexed family $\{\bigwedge T_{J_\zeta}^{*(0, 1)}X\}_{\zeta \in \mathbb{S}^2}$ of $\operatorname{Spin}^{\operatorname{c}}$-spinor bundles. Together with the hyperK\"ahler metric $g$ on $X$, there induces an $\mathbb{S}^2$-indexed family of fibrewise Clifford actions (parametrized by $\zeta \in \mathbb{S}^2$)
\begin{equation}
\label{Equation 2.5}
c_\zeta: \textstyle T^*X_\mathbb{C} \to \operatorname{End}(\textstyle\bigwedge T^* X_\mathbb{C} \otimes E)
\end{equation}
given as follows. For $\alpha \in T^*X_\mathbb{C}$, write $\alpha = \alpha^{1, 0} + \alpha^{0, 1}$ with $\alpha^{1, 0} \in T_{J_\zeta}^{1, 0}X$ and $\alpha^{0, 1} \in T_{J_\zeta}^{0, 1}X$. Then
\begin{equation*}
c_\zeta(\alpha) = \sqrt{2} (\alpha^{0, 1} \wedge - (g^{-1})^\sharp(\alpha^{1, 0}) \lrcorner).
\end{equation*}
Under the identification of $\bigwedge T^*X_\mathbb{C}$ with the Clifford algebra bundle over $(X, g)$, we know that $\omega_I, \omega_J, \omega_K$ generates a Lie subalgebra of $\Omega^2(X, \mathbb{C})$ which is isomorphic to $\mathfrak{sp}(1)$. Then the triple $(c_\zeta(\omega_I), c_\zeta(\omega_J), c_\zeta(\omega_K))$ defines an $\mathfrak{sp}(1)$-action on $\Omega^*(X, E)$. Indeed, this copy of $\mathfrak{sp}(1)$ lies in a copy of $\mathfrak{sl}(2, \mathbb{C})$ generated by a Lefschetz triple as introduced in Subsection \ref{Subsection 2.2}. To see this, take $J_\zeta = J$ and $\Omega = \tfrac{1}{2} (\omega_K + \sqrt{-1}\omega_I)$. We can check that
\begin{align*}
c_\mathbf{j}(\omega_J) = 2\sqrt{-1}H_{\overline{\Omega}}, \quad c_\mathbf{j}(\overline{\Omega}) = 2L_{\overline{\Omega}}, \quad \text{and} \quad c_\mathbf{j}(\Omega) = -2\Lambda_{\overline{\Omega}}.
\end{align*}
Therefore,
\begin{align*}
c_\mathbf{j}(\omega_J) = 2\sqrt{-1} H_{\overline{\Omega}}, \quad c_\mathbf{j}(\omega_K) = 2 (L_{\overline{\Omega}} - \Lambda_{\overline{\Omega}}), \quad \text{and} \quad c_\mathbf{j}(\omega_I) = 2\sqrt{-1} (L_{\overline{\Omega}} + \Lambda_{\overline{\Omega}}).
\end{align*}
We conclude the above observation in the following proposition.

\begin{proposition}
	\label{Proposition 2.5}
	The $\mathfrak{sp}(1)$-representation on $\Omega^*(X, E)$ generated by $c_\mathbf{j}(\omega_I), c_\mathbf{j}(\omega_J), c_\mathbf{j}(\omega_K)$ integrates to an $\operatorname{Sp}(1)$-representation $\rho_\mathbf{j}^{\operatorname{Sp}(1)}$ given by
	\begin{equation*}
		\rho_\mathbf{j}^{\operatorname{Sp}(1)} = \rho_\mathbf{j}^{\operatorname{SL}(2, \mathbb{C})} \circ \rho_1^{\operatorname{Sp}(1)},
	\end{equation*}
	where $\rho_\mathbf{j}^{\operatorname{SL}(2, \mathbb{C})}$ is the $\operatorname{SL}(2, \mathbb{C})$-representation given as in Proposition \ref{Proposition 2.4} and $\rho_1^{\operatorname{Sp}(1)}$ is the embedding $\operatorname{Sp}(1) \hookrightarrow \operatorname{SL}(2, \mathbb{C})$ given as in (\ref{Equation B.2}). In particular,
	\begin{align*}
	\rho_\mathbf{j}^{\operatorname{Sp}(1)}(\mathbf{j}) = e^{\tfrac{\pi}{2} \sqrt{-1} H_{\overline{\Omega}}}, \quad \rho_\mathbf{j}^{\operatorname{Sp}(1)}(\mathbf{k}) = e^{\frac{\pi}{2} ( L_{\overline{\Omega}} - \Lambda_{\overline{\Omega}} )}, \quad \text{and} \quad \rho_\mathbf{j}^{\operatorname{Sp}(1)}(\mathbf{i}) = e^{\frac{\pi}{2} \sqrt{-1} ( L_{\overline{\Omega}} + \Lambda_{\overline{\Omega}} )}.
	\end{align*}
\end{proposition}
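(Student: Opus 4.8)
The plan is to leverage the explicit identities relating the Clifford operators to the Lefschetz triple $(L_{\overline{\Omega}}, \Lambda_{\overline{\Omega}}, H_{\overline{\Omega}})$ established immediately before the proposition, namely $c_\mathbf{j}(\omega_J) = 2\sqrt{-1}H_{\overline{\Omega}}$, $c_\mathbf{j}(\omega_K) = 2(L_{\overline{\Omega}} - \Lambda_{\overline{\Omega}})$ and $c_\mathbf{j}(\omega_I) = 2\sqrt{-1}(L_{\overline{\Omega}} + \Lambda_{\overline{\Omega}})$. These exhibit the $\mathfrak{sp}(1)$ generated by the three Clifford operators as a (compact real form) subalgebra of the $\mathfrak{sl}(2, \mathbb{C})$ generated by the Lefschetz triple, whose integration to $\operatorname{SL}(2, \mathbb{C})$ is exactly the content of Proposition \ref{Proposition 2.4}. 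Thus the whole statement should reduce to a normalization check plus an exponential computation.

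First I would set $A_I = \tfrac{1}{2}c_\mathbf{j}(\omega_I)$, $A_J = \tfrac{1}{2}c_\mathbf{j}(\omega_J)$ and $A_K = \tfrac{1}{2}c_\mathbf{j}(\omega_K)$, and verify, using only the Lefschetz relations $[H_{\overline{\Omega}}, L_{\overline{\Omega}}] = 2L_{\overline{\Omega}}$, $[H_{\overline{\Omega}}, \Lambda_{\overline{\Omega}}] = -2\Lambda_{\overline{\Omega}}$ and $[L_{\overline{\Omega}}, \Lambda_{\overline{\Omega}}] = H_{\overline{\Omega}}$, that $(A_I, A_J, A_K)$ obeys the quaternionic commutation relations $[A_I, A_J] = 2A_K$, $[A_J, A_K] = 2A_I$, $[A_K, A_I] = 2A_J$. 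Consequently the assignment $\mathbf{i} \mapsto A_I$, $\mathbf{j} \mapsto A_J$, $\mathbf{k} \mapsto A_K$ is a Lie algebra homomorphism $\mathfrak{sp}(1) \to \mathfrak{sl}(2, \mathbb{C})$, and since $\{A_I, A_J, A_K\}$ and $\{L_{\overline{\Omega}}, \Lambda_{\overline{\Omega}}, H_{\overline{\Omega}}\}$ have the same complex span, this realizes $\mathfrak{sp}(1)$ as the compact real form of the Lefschetz $\mathfrak{sl}(2, \mathbb{C})$. I would then identify this homomorphism with the derivative of the embedding $\rho_1^{\operatorname{Sp}(1)}$ by comparing against the explicit compact-real-form inclusion $\mathfrak{su}(2) \hookrightarrow \mathfrak{sl}(2, \mathbb{C})$ recorded in its definition.

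Next, since the $\mathfrak{sp}(1)$-action factors through the Lefschetz $\mathfrak{sl}(2, \mathbb{C})$, which integrates to $\operatorname{SL}(2, \mathbb{C})$ by Proposition \ref{Proposition 2.4}, and since $\operatorname{Sp}(1)$ includes into $\operatorname{SL}(2, \mathbb{C})$ as a Lie subgroup via $\rho_1^{\operatorname{Sp}(1)}$, the integrated representation is precisely the composite $\rho_\mathbf{j}^{\operatorname{SL}(2, \mathbb{C})} \circ \rho_1^{\operatorname{Sp}(1)}$. For the closed formulas, I would use the exponential description of the quaternion units: in $\operatorname{Sp}(1) = \{q \in \mathbb{H} : |q| = 1\}$ with $\mathfrak{sp}(1) = \operatorname{Im}\mathbb{H}$, one has $\mathbf{j} = \exp(\tfrac{\pi}{2}\mathbf{j})$, and likewise for $\mathbf{i}, \mathbf{k}$. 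Hence $\rho_\mathbf{j}^{\operatorname{Sp}(1)}(\mathbf{j}) = \exp(\tfrac{\pi}{2}A_J) = \exp(\tfrac{\pi}{2}\sqrt{-1}H_{\overline{\Omega}})$, and substituting $A_K = L_{\overline{\Omega}} - \Lambda_{\overline{\Omega}}$ and $A_I = \sqrt{-1}(L_{\overline{\Omega}} + \Lambda_{\overline{\Omega}})$ yields the stated expressions for $\rho_\mathbf{j}^{\operatorname{Sp}(1)}(\mathbf{k})$ and $\rho_\mathbf{j}^{\operatorname{Sp}(1)}(\mathbf{i})$.

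The main obstacle is the bookkeeping of normalization. One must confirm that the factor $\tfrac{1}{2}$ inserted above is exactly what makes the Clifford generators satisfy the bracket relations of the standard quaternion generators $\mathbf{i}, \mathbf{j}, \mathbf{k}$, so that the identity $\mathbf{j} = \exp(\tfrac{\pi}{2}\mathbf{j})$ produces the exponent $\tfrac{\pi}{2}\sqrt{-1}H_{\overline{\Omega}}$ rather than a rescaled variant, and that this choice is compatible with the precise presentation of $\rho_1^{\operatorname{Sp}(1)}$. Beyond reconciling these conventions, the argument is essentially a translation of the preceding Clifford-to-Lefschetz computation together with the integration furnished by Proposition \ref{Proposition 2.4}.
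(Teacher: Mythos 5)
Your proposal is correct and follows essentially the same route as the paper, which treats Proposition \ref{Proposition 2.5} as a direct consequence of the displayed identities $c_\mathbf{j}(\omega_J) = 2\sqrt{-1}H_{\overline{\Omega}}$, $c_\mathbf{j}(\omega_K) = 2(L_{\overline{\Omega}} - \Lambda_{\overline{\Omega}})$, $c_\mathbf{j}(\omega_I) = 2\sqrt{-1}(L_{\overline{\Omega}} + \Lambda_{\overline{\Omega}})$, combined with the integration supplied by Proposition \ref{Proposition 2.4} and the explicit model of $\rho_1^{\operatorname{Sp}(1)}$ in Appendix \ref{Appendix B}. Your added care with the factor $\tfrac{1}{2}$ normalization and the exponential identity $\mathbf{j} = \exp(\tfrac{\pi}{2}\mathbf{j})$ simply makes explicit the bookkeeping the paper leaves implicit.
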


On the other hand, we shall see how the $\operatorname{Sp}(1)$-action $\rho^{\operatorname{Sp}(1)}$ induced by the hypercomplex structure on $X$ intertwines the above family of fibrewise Clifford actions. The following proposition is useful for Theorem \ref{Theorem 1.1}. Its proof is by direct calculation.

\begin{proposition}
	\label{Proposition 2.6}
	Let $\rho := \rho^{\operatorname{Sp}(1)}$ be the action given as in (\ref{Equation 2.2}). Let $\zeta \in \mathbb{S}^2$ and $\eta \in \operatorname{Sp}(1)$. Then the following diagram commutes:
	\begin{center}
		\begin{tikzcd}
			T^*X_\mathbb{C} \ar[r, "c_\zeta"] \ar[d, "\rho(\eta)"'] & \operatorname{End}(\bigwedge T^*X_\mathbb{C} \otimes E) \ar[d, "\text{conjugation by } \rho(\eta)"] \\
			T^*X_\mathbb{C} \ar[r, "c_{\eta \cdot \zeta}"'] & \operatorname{End}(\bigwedge T^*X_\mathbb{C} \otimes E)
		\end{tikzcd}
	\end{center}
\end{proposition}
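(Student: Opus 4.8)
The plan is to use that $\rho(\eta)$, $c_\zeta(\alpha)$ and $c_{\eta\cdot\zeta}$ are all zeroth-order operators on $\bigwedge T^*X_\mathbb{C}\otimes E$ that are $\mathbb{C}$-linear in the $E$-factor, so that the asserted commutativity is the pointwise identity
\begin{equation*}
\rho(\eta)\,c_\zeta(\alpha)\,\rho(\eta)^{-1} = c_{\eta\cdot\zeta}\big(\rho(\eta)\alpha\big),\qquad \alpha\in T^*X_\mathbb{C},
\end{equation*}
on a single fibre $\bigwedge T^*_xX_\mathbb{C}$. At a point the data $(T_xX, I, J, K, g)$ is just a quaternionic Hermitian vector space, so the statement becomes a finite-dimensional linear-algebra identity in which the bundle $E$ plays no role and may be discarded.

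Next I would reduce the group statement to its generators. Let $G\subset\operatorname{Sp}(1)$ be the set of $\eta$ for which the identity holds for all $\zeta\in\mathbb{S}^2$ and all $\alpha$. Then $G$ is a subgroup: closure under products follows from the bookkeeping
\begin{align*}
\rho(\eta_1\eta_2)\,c_\zeta(\alpha)\,\rho(\eta_1\eta_2)^{-1}
&= \rho(\eta_1)\big(\rho(\eta_2)\,c_\zeta(\alpha)\,\rho(\eta_2)^{-1}\big)\rho(\eta_1)^{-1}\\
&= \rho(\eta_1)\,c_{\eta_2\cdot\zeta}\big(\rho(\eta_2)\alpha\big)\rho(\eta_1)^{-1}
= c_{(\eta_1\eta_2)\cdot\zeta}\big(\rho(\eta_1\eta_2)\alpha\big),
\end{align*}
and closure under inverses is analogous; continuity of $\eta\mapsto\rho(\eta)$ and of $(\zeta,\alpha)\mapsto c_\zeta(\alpha)$ shows $G$ is closed. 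Since each unit imaginary quaternion $\zeta_0\in\mathbb{S}^2$ lies in $\operatorname{Sp}(1)$ and these elements generate $\operatorname{Sp}(1)$ (their products sweep out the one-parameter subgroups $\exp(t\nu)$, $\nu\in\mathfrak{sp}(1)$), it suffices to verify the identity for $\eta=\zeta_0\in\mathbb{S}^2$.

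For such $\eta$ I would compute with the explicit formula $\rho(\zeta_0)=\exp\big(\tfrac{\pi}{2}\operatorname{ad}(J_{\zeta_0})\big)$, which holds because $\operatorname{ad}(J_{\zeta_0})$ acts on $J_{\zeta_0}$-bidegree $(p,q)$ as $(p-q)\sqrt{-1}$, so that there $\exp\big(\tfrac{\pi}{2}\operatorname{ad}(J_{\zeta_0})\big)=\sqrt{-1}^{\,p-q}=\rho(\zeta_0)$. Conjugation by $\rho(\zeta_0)$ then becomes $\exp\big(\tfrac{\pi}{2}[\operatorname{ad}(J_{\zeta_0}),-]\big)$, and the whole identity reduces to a single commutator computation of $[\operatorname{ad}(J_{\zeta_0}),c_\zeta(\alpha)]$. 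Writing $c_\zeta(\alpha)=\sqrt{2}\big(\alpha^{0,1}\wedge-(g^{-1})^\sharp(\alpha^{1,0})\lrcorner\big)$ in the $J_\zeta$-splitting, I would track the $J_{\zeta_0}$-grading through the wedge and contraction, exponentiate, and match the result against $c_{\zeta_0\cdot\zeta}(\rho(\zeta_0)\alpha)$, using that the adjoint rotation $\zeta\mapsto\zeta_0\cdot\zeta=\zeta_0\zeta\zeta_0^{-1}$ moves the $(1,0)/(0,1)$-splitting in exactly the way $\rho(\zeta_0)$ prescribes. In practice I would organize this by decomposing $\zeta$ into its components parallel and orthogonal to $\zeta_0$, reducing to the two model situations $\zeta=\zeta_0$ and $\zeta\perp\zeta_0$.

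The main obstacle is precisely this last interaction of two distinct complex structures: the grading $\operatorname{ad}(J_{\zeta_0})$ is diagonal for $J_{\zeta_0}$, whereas the Clifford generator $c_\zeta(\alpha)$ is adapted to $J_\zeta$, so the commutator does not collapse within a single Hodge decomposition. Controlling how wedging and contraction by $J_\zeta$-type components intertwine the $J_{\zeta_0}$-grading — where the quaternion relations $IJ=K=-JI$ and their cyclic analogues enter — is the computational heart of the argument; once these are in hand, the exponentiation and the comparison with the rotated Clifford action $c_{\zeta_0\cdot\zeta}$ are formal.
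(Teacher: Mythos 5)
Your proposal is correct, and it is essentially the paper's approach: the paper disposes of Proposition \ref{Proposition 2.6} with the single sentence ``Its proof is by direct calculation,'' and your scaffolding — the fibrewise reduction (the action $\rho$ is fibrewise and trivial on the $E$-factor), the reduction to generators $\eta = \zeta_0 \in \mathbb{S}^2$ via the cocycle identity (every unit quaternion is a product of two imaginary units), and the formula $\rho(\zeta_0) = \exp\bigl(\tfrac{\pi}{2}\operatorname{ad}(J_{\zeta_0})\bigr)$ — is a valid organization of exactly that calculation.

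One substantive comment on the step you defer as ``the computational heart'': the interaction of the two complex structures $J_{\zeta_0}$ and $J_\zeta$ dissolves if you decompose $c_\zeta(\alpha)$ into wedge and contraction by \emph{fixed} $1$-forms rather than into $J_\zeta$-typed pieces. Writing $e(\beta) = \beta \wedge$ and $\iota(\beta) = (g^{-1})^\sharp(\beta) \lrcorner$, and $J_\zeta^{\operatorname{t}}\beta = \beta \circ J_\zeta$, one has
\begin{equation*}
c_\zeta(\alpha) = \tfrac{1}{\sqrt{2}}\left[(e - \iota)(\alpha) + \sqrt{-1}\,(e + \iota)\bigl(J_\zeta^{\operatorname{t}}\alpha\bigr)\right],
\end{equation*}
so the $\zeta$-dependence sits only in which $1$-forms occupy the slots. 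The commutators
\begin{equation*}
[\operatorname{ad}(J_{\zeta_0}), e(\beta)] = e\bigl(J_{\zeta_0}^{\operatorname{t}}\beta\bigr), \qquad [\operatorname{ad}(J_{\zeta_0}), \iota(\beta)] = \iota\bigl(J_{\zeta_0}^{\operatorname{t}}\beta\bigr)
\end{equation*}
hold for \emph{every} $1$-form $\beta$ and are verified entirely within the $J_{\zeta_0}$-bigrading, because $(g^{-1})^\sharp$ exchanges $J_{\zeta_0}$-types: both $e(\beta^{1,0})$ and $\iota(\beta^{1,0})$ raise $p - q$ by one, and both $e(\beta^{0,1})$ and $\iota(\beta^{0,1})$ lower it by one. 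Exponentiating (using $(J_{\zeta_0}^{\operatorname{t}})^2 = -1$), conjugation by $\rho(\zeta_0)$ simply replaces each slot $\beta$ by $J_{\zeta_0}^{\operatorname{t}}\beta = \rho(\zeta_0)\beta$, and the asserted identity $\rho(\zeta_0)\, c_\zeta(\alpha)\, \rho(\zeta_0)^{-1} = c_{\zeta_0 \cdot \zeta}(\rho(\zeta_0)\alpha)$ reduces to the single equation $J_\zeta J_{\zeta_0} = J_{\zeta_0} J_{\zeta_0 \cdot \zeta}$, i.e.\ the quaternion identity $\zeta \zeta_0 = \zeta_0(\zeta_0 \zeta \zeta_0^{-1})$, which holds because $\zeta_0^2 = -1$ and $\zeta_0^{-1} = -\zeta_0$. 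This also removes the need for your case-split into $\zeta = \zeta_0$ and $\zeta \perp \zeta_0$; note that as stated that split is incomplete without the further observation that both sides of the identity are \emph{affine} (not linear) in $\zeta \in \mathbb{R}^3 \supset \mathbb{S}^2$, since $\zeta \mapsto c_\zeta(\alpha)$ has a $\zeta$-independent constant part, so one must check enough affinely independent points rather than literally superpose the parallel and orthogonal components.
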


\section{$\operatorname{Spin}^{\operatorname{c}}$-Dirac Operators on a HyperK\"ahler Manifold}
\label{Section 3}
This is the main section of the present paper. Throughout this section, let $E$ be a Hermitian vector bundle over $X$ with a unitary connection $\nabla$. We first recall the following theorem proved by Verbitsky which is parallel to Theorem \ref{Theorem 1.1} and serves as a motivation of this paper.

\begin{theorem}
	\label{Theorem 3.1}
	(Reformulation of Theorem 8.1 in \cite{Ver1996})
	Let $X$ be a hyperK\"ahler manifold and $E$ be a hyperholomorphic bundle over $X$. Then for all $\eta \in \operatorname{Sp}(1)$ and $\zeta \in \mathbb{S}^2$, the following diagram commutes:
	\begin{equation}
	\begin{tikzcd}
	\Omega_{J_\zeta}^{0, *}(X, E) \ar[r, "\Delta_{\overline{\partial}_{J_\zeta}}"] \ar[d, "\eta \cdot"'] & \Omega_{J_\zeta}^{0, *}(X, E) \ar[d, "\eta \cdot"]\\
	\Omega_{J_{\eta \cdot \zeta}}^{0, *}(X, E) \ar[r, "\Delta_{\overline{\partial}_{J_{\eta \cdot \zeta}}}"'] & \Omega_{J_{\eta \cdot \zeta}}^{0, *}(X, E)
	\end{tikzcd}
	\end{equation}
	where $\Delta_{\overline{\partial}_{J_\zeta}} := (\overline{\partial}_{J_\zeta} + \overline{\partial}_{J_\zeta}^*)^2$ is the Dolbeault Laplacian on $\Omega_{J_\zeta}^{0, *}(X, E)$ and
	\begin{align*}
		\rho^{\operatorname{Sp}(1)}: \operatorname{Sp}(1) \times \Omega^*(X, E) \to \Omega^*(X, E), \quad (\eta, s) \mapsto \eta \cdot s,
	\end{align*}
	is the $\operatorname{Sp}(1)$-action on $\Omega^*(X, E)$ given as in (\ref{Equation 2.2}).
\end{theorem}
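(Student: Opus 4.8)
The plan is to mirror the proof of Theorem \ref{Theorem 1.1}: pass from the Dolbeault Laplacian to the $\operatorname{Spin}^{\operatorname{c}}$-Dirac Laplacian and then invoke the Lichnerowicz formula. The crucial gain over the situation of Theorem \ref{Theorem 1.1} is that a hyperholomorphic $E$ is holomorphic with respect to \emph{every} $J_\zeta$, with the given unitary connection being the associated Chern connection, since $F_E$ is of type $(1,1)$ for all $\zeta$ and in particular $F_E^{0,2}=0$. With the Clifford normalization of (\ref{Equation 2.5}), the standard K\"ahler identity then gives $\slashed{D}_{J_\zeta}=\sqrt{2}(\overline{\partial}_{J_\zeta}+\overline{\partial}_{J_\zeta}^*)$, hence
\begin{equation*}
\slashed{D}_{J_\zeta}^2 = 2\,\Delta_{\overline{\partial}_{J_\zeta}}.
\end{equation*}
It is exactly this identity that fails for the bundle of Theorem \ref{Theorem 1.1}, which is holomorphic only for $\pm J$, forcing that theorem to be phrased directly in terms of $\slashed{D}_{J_\zeta}^2$. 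Thus it suffices to prove the commuting square with $\Delta_{\overline{\partial}_{J_\zeta}}$ replaced by $\slashed{D}_{J_\zeta}^2$.

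Next I would apply the Lichnerowicz formula to the twisted $\operatorname{Spin}^{\operatorname{c}}$-Dirac operator,
\begin{equation*}
\slashed{D}_{J_\zeta}^2 = \nabla^*\nabla + \tfrac{s}{4} + c_\zeta(F_E),
\end{equation*}
where $\nabla$ denotes the tensor product of the Levi-Civita (spinor) connection on $\bigwedge T^*X_\mathbb{C}$ with the given unitary connection on $E$, and $s$ is the scalar curvature of $(X,g)$. Here the hyperK\"ahler geometry does the essential work: $X$ is Ricci-flat so $s=0$, and the canonical bundle of each $J_\zeta$ is flat (the holonomy lies in $\operatorname{Sp}(n)$), so the determinant-line contribution to the Lichnerowicz formula vanishes and the spinor connection is simply the Levi-Civita connection on forms, \emph{independently of} $\zeta$. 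Therefore $\nabla^*\nabla$ is one and the same operator for every $\zeta$, and all of the $\zeta$-dependence of $\slashed{D}_{J_\zeta}^2$ is concentrated in the single zeroth-order term $c_\zeta(F_E)$. This already explains why the $\Delta_{\overline{\partial}_{J_\zeta}}$ need not coincide across the twistor sphere, in contrast with the trivial-bundle case where $c_\zeta(F_E)=0$.

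The equivariance now factors into two clean checks. First, $\rho(\eta)$ commutes with $\nabla^*\nabla$: the generators $\operatorname{ad}(I),\operatorname{ad}(J),\operatorname{ad}(K)$ of $\rho^{\operatorname{Sp}(1)}$ are assembled from the \emph{parallel} complex structures $I,J,K$ and act as $\operatorname{Id}_E$ on the bundle factor, so $\rho(\eta)$ is parallel for $\nabla$ and acts as a fibrewise unitary, whence it commutes with $\nabla$, with $\nabla^*$, and therefore with $\nabla^*\nabla$. Second, conjugation by $\rho(\eta)$ carries $c_\zeta(F_E)$ to $c_{\eta\cdot\zeta}(F_E)$: Proposition \ref{Proposition 2.6} gives $\rho(\eta)\,c_\zeta(\alpha)\,\rho(\eta)^{-1}=c_{\eta\cdot\zeta}(\rho(\eta)\alpha)$ on one-forms, which extends through the Clifford algebra to the two-form $F_E$ (using that $\rho(\eta)$ acts by an algebra automorphism on $\Omega^*(X,\mathbb{C})$), while hyperholomorphicity of $E$ means precisely that $F_E$ is $\operatorname{Sp}(1)$-invariant, i.e. $\rho(\eta)F_E=F_E$. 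Combining the two checks yields $\rho(\eta)\,\slashed{D}_{J_\zeta}^2=\slashed{D}_{J_{\eta\cdot\zeta}}^2\,\rho(\eta)$, and the factor $\tfrac12$ then delivers the square for $\Delta_{\overline{\partial}}$. I would also verify that $\rho(\eta)$ maps $\Omega_{J_\zeta}^{0,*}(X,E)$ onto $\Omega_{J_{\eta\cdot\zeta}}^{0,*}(X,E)$, which follows since this subspace is generated from $\Omega^0(X,E)$ by the Clifford action $c_\zeta$, and $\rho(\eta)$ intertwines $c_\zeta$ with $c_{\eta\cdot\zeta}$ while fixing degree-zero sections.

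The step I expect to demand the most care is the identification underlying the Lichnerowicz formula: confirming that the flatness of the canonical bundle genuinely removes \emph{all} $\zeta$-dependence from $\nabla^*\nabla$, so that no residual curvature-of-the-determinant-line term survives and the entire $\zeta$-dependence is isolated in $c_\zeta(F_E)$. Once that is secured, the remainder is formal, namely the transformation law of Proposition \ref{Proposition 2.6} applied to a two-form together with the $\operatorname{Sp}(1)$-invariance of $F_E$. A subsidiary point needing attention is the compatibility of the adjoints $\overline{\partial}_{J_\zeta}^*$ and $\nabla^*$ with the Hermitian pairings entering the K\"ahler identity $\slashed{D}_{J_\zeta}=\sqrt{2}(\overline{\partial}_{J_\zeta}+\overline{\partial}_{J_\zeta}^*)$; this is where the present Dirac-operator argument and Verbitsky's original computation ultimately coincide.
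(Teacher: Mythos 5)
Your proposal is correct and takes essentially the same approach as the paper's own argument (sketched in Remark \ref{Remark 3.8}): the Lichnerowicz formula with vanishing scalar curvature reduces everything to the $\operatorname{Sp}(1)$-invariance of $\nabla^*\nabla$ (Lemma \ref{Lemma 3.6}, which you reprove directly via parallelism of $I, J, K$) and of $F_E$ (equivalent to hyperholomorphicity), combined with Proposition \ref{Proposition 2.6} applied to the Weitzenb\"ock remainder $c_\zeta(F_E)$. The only slip is incidental: the identity $\slashed{D}_{J_\zeta} = \sqrt{2}(\overline{\partial}_{J_\zeta} + \overline{\partial}_{J_\zeta}^*)$ does not ``fail'' for the bundle of Theorem \ref{Theorem 1.1} --- the paper states it after Definition \ref{Definition 3.2} for an arbitrary unitary connection, with $\Delta_{\overline{\partial}_{J_\zeta}}$ \emph{defined} as $(\overline{\partial}_{J_\zeta} + \overline{\partial}_{J_\zeta}^*)^2$ --- but your proof nowhere relies on that aside.
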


For an arbitrary Hermitian vector bundle $E$ over $X$ with a unitary connection $\nabla$, $E$ does not necessarily admit a $J_\zeta$-holomorphic structure, but we can always define the associated $\operatorname{Spin}^{\operatorname{c}}$-Dirac operator $\slashed{D}_{J_\zeta}$, which plays a key role in our main theorem. By abuse of notation, $\nabla$ also denotes the connection on $\bigwedge T^*X_\mathbb{C} \otimes E$ induced by $\nabla$ and the Levi-Civita connection $\nabla^g$ on $(X, g)$.

\begin{definition}
	\label{Definition 3.2}
	For $\zeta \in \mathbb{S}^2$, define the $\operatorname{Spin}^{\operatorname{c}}$-Dirac operator $\slashed{D}_{J_\zeta}$ on $\Omega^*(X, E)$ as the composition
	\begin{equation}
	\begin{tikzcd}
	\Omega^*(X, E) \ar[r, "\nabla"] & \Gamma(X, T^*X_\mathbb{C} \otimes \textstyle\bigwedge T^*X_\mathbb{C} \otimes E) \ar[r, "c_\zeta"] & \Omega^*(X, E),
	\end{tikzcd}
	\end{equation}
	where $c_\zeta$ is the fibrewise Clifford action given as in (\ref{Equation 2.5}).
\end{definition}

We can give an alternative description of $\slashed{D}_{J_\zeta}$. Denote by $d$ the exterior covariant derivative on $\Omega^*(X, E)$ induced by $\nabla$. In general, $d^2 \neq 0$. Consider any $\zeta \in \mathbb{S}^2$. Define $\overline{\partial}_{J_\zeta} = \tfrac{1}{2} ( d - \sqrt{-1} d^{J_\zeta} )$ on $\Omega^*(X, E)$, where $\Phi^{J_\zeta} = \rho(\zeta) \circ \Phi \circ \rho(\zeta^{-1})$ for any $\mathbb{C}$-linear map $\Phi: \Omega^*(X, E) \to \Omega^*(X, E)$ and $\rho = \rho^{\operatorname{Sp}(1)}$ is given as in (\ref{Equation 2.2}). When restricted on $\Omega^0(X, E)$, $\overline{\partial}_{J_\zeta}$ is the $(0, 1)$-part of the connection $\nabla$ with respect to the complex structure $J_\zeta$. Indeed, we have
\begin{equation}
\slashed{D}_{J_\zeta} = \sqrt{2}(\overline{\partial}_{J_\zeta} + \overline{\partial}_{J_\zeta}^*),
\end{equation}
and hence for any $p \in \mathbb{N}$, $\slashed{D}_{J_\zeta}$ preserves the space $\Omega_{J_\zeta}^{p, *}(X, E)$, i.e. $\slashed{D}_{J_\zeta}(\Omega_{J_\zeta}^{p, *}(X, E)) \subset \Omega_{J_\zeta}^{p, *}(X, E)$.\par
Now, we are ready to prove our main result. In Subsection \ref{Subsection 3.1}, we provide the proofs of Theorems \ref{Theorem 1.1} and Corollary \ref{Corollary 1.2}. In Subsection \ref{Subsection 3.2}, we further discuss a relation between $\slashed{D}_J$ and $\slashed{D}_{-J}$ for an arbitrary unitary connection $\nabla$ on a Hermitian vector bundle $E$ over $X$.

\subsection{The proof of Theorem \ref{Theorem 1.1}}
\quad\par
\label{Subsection 3.1}
Our main result concerns with an $\operatorname{Sp}(1)$-symmetry intertwining an $\mathbb{S}^2$-indexed family $\{ \slashed{D}_{J_\zeta}^2 \}_{\zeta \in \mathbb{S}^2}$ of the $\operatorname{Spin}^{\operatorname{c}}$-Dirac Laplacians when the curvature $F_E$ of $\nabla$ satisfies $\tfrac{\sqrt{-1}}{2\pi}F_E = \omega_J \operatorname{Id}_E$.

\begin{theorem}
	($=$ Theorem \ref{Theorem 1.1})
	Let $X$ be a hyperK\"ahler manifold and $E$ be a Hermitian vector bundle over $X$ with a unitary connection of curvature $F_E$ with $\tfrac{\sqrt{-1}}{2\pi}F_E = \omega_J \operatorname{Id}_E$. Then there is an $\operatorname{Sp}(1)$-action
	\begin{align*}
	\chi: \operatorname{Sp}(1) \times \Omega^*(X, E) \to \Omega^*(X, E), \quad (\eta, s) \mapsto \eta \cdot s,
	\end{align*}
	such that for all $\eta \in \operatorname{Sp}(1)$ and $\zeta \in \mathbb{S}^2$, the following diagram commutes:
	\begin{center}
		\begin{tikzcd}
			\Omega_{J_\zeta}^{0, *}(X, E) \ar[d, "\eta \cdot"'] \ar[r, "\slashed{D}_{J_\zeta}^2"] & \Omega_{J_\zeta}^{0, *}(X, E) \ar[d, "\eta \cdot"]\\
			\Omega_{J_{\eta \cdot \zeta}}^{0, *}(X, E) \ar[r, "\slashed{D}_{J_{\eta \cdot \zeta}}^2"'] & \Omega_{J_{\eta \cdot \zeta}}^{0, *}(X, E)
		\end{tikzcd}
	\end{center}
	where $\slashed{D}_{J_\zeta}$ is the $\operatorname{Spin}^{\operatorname{c}}$-Dirac operator on $\Omega_{J_\zeta}^{0, *}(X, E)$ defined as in Definition \ref{Definition 3.2}.
\end{theorem}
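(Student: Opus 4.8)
The plan is to reduce, via the Lichnerowicz formula, to a statement about a single zeroth-order operator, and then to realize $\chi$ as a suitable $\operatorname{Sp}(1)$-subgroup of the $\operatorname{Sp}(4,\mathbb{C})$-action of Proposition \ref{Proposition 2.1}. First I would apply the Lichnerowicz formula to $\slashed{D}_{J_\zeta}$ on $\Omega_{J_\zeta}^{0,*}(X,E)$. Since $X$ is hyperK\"ahler it is Ricci-flat, and for every $J_\zeta$ the canonical bundle carries a flat Chern connection; hence the scalar-curvature term and the curvature of the $\operatorname{Spin}^{\operatorname{c}}$ determinant line both vanish, leaving
\begin{equation*}
\slashed{D}_{J_\zeta}^2 = \nabla^*\nabla + c_\zeta(F_E) = \nabla^*\nabla - 2\pi\sqrt{-1}\,c_\zeta(\omega_J)
\end{equation*}
on $\Omega_{J_\zeta}^{0,*}(X,E)$, using $\tfrac{\sqrt{-1}}{2\pi}F_E=\omega_J\operatorname{Id}_E$. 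The connection Laplacian $\nabla^*\nabla$ is manifestly independent of $\zeta$, and because the hypercomplex structure and the K\"ahler forms $\omega_I,\omega_J,\omega_K$ are parallel, it commutes with every fibrewise operator assembled from them --- in particular with $\rho^{\operatorname{Sp}(1)}$ and with all the Clifford actions $c_\bullet(\omega_\mu)$. The entire content of the theorem is therefore carried by the curvature term, and it suffices to produce an $\operatorname{Sp}(1)$-action $\chi$ with $\chi(\eta)\,c_\zeta(\omega_J)=c_{\eta\cdot\zeta}(\omega_J)\,\chi(\eta)$ as maps $\Omega_{J_\zeta}^{0,*}(X,E)\to\Omega_{J_{\eta\cdot\zeta}}^{0,*}(X,E)$.

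Next I would locate the curvature term inside the symmetry algebra. By Proposition \ref{Proposition 2.5}, applied at each $J_\zeta$ via Proposition \ref{Proposition 2.6}, on $\Omega_{J_\zeta}^{0,*}(X,E)$ the operator $c_\zeta(\omega_J)$ lies in the complexified Clifford $\mathfrak{sp}(1)$ attached to $J_\zeta$, i.e. the $\overline{\Omega}_\zeta$-Lefschetz $\mathfrak{sl}(2,\mathbb{C})$ spanned by $H_{\overline{\Omega}_\zeta},L_{\overline{\Omega}_\zeta},\Lambda_{\overline{\Omega}_\zeta}$: decomposing $\omega_J$ into $J_\zeta$-types gives, for $\zeta=\cos\theta\,\mathbf{j}+\sin\theta\,\mathbf{k}$,
\begin{equation*}
c_\zeta(\omega_J)\big|_{\Omega^{0,*}_{J_\zeta}} = 2\sqrt{-1}\cos\theta\,H_{\overline{\Omega}_\zeta} - 2\sin\theta\,(L_{\overline{\Omega}_\zeta}-\Lambda_{\overline{\Omega}_\zeta}),
\end{equation*}
so that $c_\zeta(\omega_J)$ is exactly the image of the fixed direction $\mathbf{j}$ under the Clifford embedding $\mu\mapsto c_\zeta(\omega_\mu)$. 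This makes transparent why the naive choice $\chi=\rho^{\operatorname{Sp}(1)}$ fails: by Proposition \ref{Proposition 2.6} conjugation by $\rho(\eta)$ sends $c_\zeta(\omega_J)$ to $c_{\eta\cdot\zeta}(\rho(\eta)\omega_J)=c_{\eta\cdot\zeta}(\omega_{\eta\cdot\mathbf{j}})$, rotating the curvature form off $\omega_J$ as soon as $\eta$ moves $\mathbf{j}$.

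I would therefore construct $\chi$ as a \emph{twisted} copy of $\operatorname{Sp}(1)$ inside $\operatorname{Sp}(4,\mathbb{C})$, correcting $\rho^{\operatorname{Sp}(1)}$ by a Clifford rotation that cancels the unwanted motion of $\omega_J$. Differentiating Proposition \ref{Proposition 2.6} yields
\begin{equation*}
[\operatorname{ad}(\mu),\,c_\zeta(\omega_J)] = \tfrac{d}{dt}\big|_{t=0}\,c_{e^{t\mu}\cdot\zeta}(\omega_J) + c_\zeta(\omega_{[\mu,\mathbf{j}]}),
\end{equation*}
so the generators of $\chi$ should be $X_\mu=\operatorname{ad}(\mu)-C_\mu$ with $[C_\mu,c_\zeta(\omega_J)]=c_\zeta(\omega_{[\mu,\mathbf{j}]})$ on $\Omega_{J_\zeta}^{0,*}(X,E)$; the Clifford commutation relations behind Proposition \ref{Proposition 2.5} identify the local solution as $C_\mu=\tfrac12\,c_\zeta(\omega_\mu)$. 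I would then verify, using the $\mathfrak{sl}(2,\mathbb{C})$-module decomposition of Proposition \ref{Proposition 2.4}, that the $X_\mu$ close into a single copy of $\mathfrak{sp}(1)$ and exponentiate to the required $\chi$. Because the corrections shift the antiholomorphic degree by $\pm2$, the action $\chi$ preserves only the $\mathbb{Z}_2$-grading of $\Omega_{J_\zeta}^{0,*}(X,E)$, in contrast with the $\mathbb{Z}$-graded symmetry of Theorem \ref{Theorem 3.1}; the same scheme with $C_\mu=0$ reproves Theorem \ref{Theorem 3.1}, since for a hyperholomorphic bundle $F_E$ is $\operatorname{Sp}(1)$-invariant and no correction is needed.

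The hard part will be upgrading the base-dependent prescription $C_\mu=\tfrac12 c_\zeta(\omega_\mu)$ to a single, $\zeta$-independent generator, as the corrected intertwining must hold simultaneously for every $\zeta\in\mathbb{S}^2$: a priori the natural correction lives in the Clifford $\mathfrak{sp}(1)$ attached to the \emph{varying} complex structure $J_\zeta$, and its base-derivative involves the K\"ahler Lefschetz operators $L_{\omega_\mu}$, which map $\Omega^{0,*}_{J_\zeta}$ out of itself and do not belong to the Clifford $\mathfrak{sp}(1)$. The crux is that, after restriction to $\Omega^{0,*}_{J_\zeta}$, these stray contributions cancel --- they record only the tilting of the subspace $\Omega^{0,*}_{J_\zeta}$ as $\zeta$ varies --- so that the $X_\mu$ descend to honest fixed operators closing into $\mathfrak{sp}(1)$. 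It is precisely here that the finite-dimensional $\operatorname{Sp}(4,\mathbb{C})$-structure of Proposition \ref{Proposition 2.1}, which packages $\operatorname{ad}(I),\operatorname{ad}(J),\operatorname{ad}(K)$, the Lefschetz operators and the Clifford actions into one Lie algebra, reduces the infinite family of conditions to a finite computation.
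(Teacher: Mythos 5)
Your first half reproduces the paper's reduction faithfully: the Lichnerowicz formula with vanishing scalar curvature (and flat canonical bundle, by Ricci-flatness) gives $\slashed{D}_{J_\zeta}^2 = \nabla^*\nabla - 2\pi\sqrt{-1}\,c_\zeta(\omega_J)$, and the commutation of $\nabla^*\nabla$ with the entire $\operatorname{Sp}(4,\mathbb{C})$-action is exactly the paper's Lemma \ref{Lemma 3.6} (proved there via the second covariant derivative and parallelism of the K\"ahler forms). Your diagnosis of why the naive choice $\chi = \rho^{\operatorname{Sp}(1)}$ fails, via Proposition \ref{Proposition 2.6}, is also the paper's observation. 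So the entire weight of the theorem rests on your second half, the construction of $\chi$, and that is where there is a genuine gap.

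Your prescription $X_\mu = \operatorname{ad}(\mu) - \tfrac12 c_\zeta(\omega_\mu)$ does not define an operator on $\Omega^*(X,E)$: the correction term depends on $\zeta$, while the subspaces $\Omega_{J_\zeta}^{0,*}(X,E)$ overlap (they all contain $\Omega^0(X,E)$, for instance) and do not decompose $\Omega^*(X,E)$, so prescribing a different formula on each of them requires consistency conditions you never address. Moreover, the assertion that the ``stray contributions'' in $\tfrac{d}{dt}\big|_{t=0} c_{e^{t\mu}\cdot\zeta}(\omega_J)$ --- which involve operators such as $L_{\omega_\mu}$ that genuinely do not preserve $\Omega_{J_\zeta}^{0,*}(X,E)$ --- cancel upon restriction, and that the resulting $X_\mu$ close into a copy of $\mathfrak{sp}(1)$, is precisely the content of the theorem; you flag it as ``the hard part'' and gesture at a finite computation inside $\operatorname{Sp}(4,\mathbb{C})$, but you do not carry it out, and nothing in the proposal guarantees it can be carried out in this form. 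The paper sidesteps this difficulty with a different device: for each $\zeta$ it picks $\alpha_\zeta \in \operatorname{Sp}(1)$ with $\alpha_\zeta \cdot \zeta = \mathbf{j}$ and uses $\rho(\alpha_\zeta)$ to conjugate $\slashed{D}_{J_\zeta}^2$ into the operator $\tilde{\Delta}_\zeta = \nabla^*\nabla - 2\pi\sqrt{-1}\,c_\mathbf{j}(\omega_\zeta)$ acting on the \emph{fixed} space $\Omega_J^{0,*}(X,E)$ (Lemma \ref{Lemma 3.5}); the whole $\mathbb{S}^2$-family then lives on a single space, where the single fixed Clifford $\operatorname{Sp}(1)$-action $\rho_\mathbf{j}^{\operatorname{Sp}(1)}$ of Proposition \ref{Proposition 2.5} visibly rotates $c_\mathbf{j}(\omega_\zeta) \mapsto c_\mathbf{j}(\omega_{\eta\cdot\zeta})$ (Lemma \ref{Lemma 3.7}), and $\chi(\eta)$ is simply defined as the composition $\rho(\alpha_{\eta\cdot\zeta})^{-1}\circ\rho_\mathbf{j}^{\operatorname{Sp}(1)}(\eta)\circ\rho(\alpha_\zeta)$, so that no closure of corrected generators and no $\zeta$-independence ever needs to be verified. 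To complete your argument you would either have to prove your cancellation and closure claims (in effect redoing Lemmas \ref{Lemma 3.5} and \ref{Lemma 3.7} in infinitesimal form), or adopt this conjugate-to-a-fixed-frame trick.
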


A naive guess of the desired $\operatorname{Sp}(1)$-symmetry for Theorem \ref{Theorem 1.1} would be the $\operatorname{Sp}(1)$-action $\rho^{\operatorname{Sp}(1)}$ in (\ref{Equation 2.2}), because it is the action that appears in Theorem \ref{Theorem 3.1}. It turns out that this is not true. Yet, we find that this `wrong' $\operatorname{Sp}(1)$-action $\rho^{\operatorname{Sp}(1)}$ is useful in the proof of Theorem \ref{Theorem 1.1} - it transforms the $\mathbb{S}^2$-indexed family $\{\slashed{D}_{J_\zeta}^2\}_{\zeta \in \mathbb{S}^2}$ of second order differential operators to another $\mathbb{S}^2$-indexed family $\{ \tilde{\Delta}_\zeta \}_{\zeta \in \mathbb{S}^2}$ of operators, and as a result it is much easier to write down an $\operatorname{Sp}(1)$-symmetry intertwining the operators $\tilde{\Delta}_\zeta$'s and preserving the spaces $\Omega_J^{0, *}(X, E)$.

\begin{definition}
	\label{Definition 3.4}
	Let $\zeta \in \mathbb{S}^2$. Define the differential operator $\tilde{\Delta}_\zeta$ on $\Omega_J^{0, *}(X, E)$ as
	\begin{equation}
		\tilde{\Delta}_\zeta = \nabla^*\nabla - 2 \pi \sqrt{-1} c_\mathbf{j} ( \omega_\zeta ).
	\end{equation}
\end{definition}

\begin{lemma}
	\label{Lemma 3.5}
	Let $\rho := \rho^{\operatorname{Sp}(1)}$ be the action given as in (\ref{Equation 2.2}). Let $\zeta \in \mathbb{S}^2 \subset \mathfrak{sp}(1)$ and $\alpha \in \operatorname{Sp}(1)$ be such that $\alpha \cdot \zeta = \mathbf{j}$. Then the following diagram commutes:
	\begin{center}
		\begin{tikzcd}
			\Omega_{J_\zeta}^{0, *}(X, E) \ar[d, "\rho(\alpha)"'] \ar[r, "\slashed{D}_{J_\zeta}^2"] & \Omega_{J_\zeta}^{0, *}(X, E) \ar[d, "\rho(\alpha)"]\\
			\Omega_J^{0, *}(X, E) \ar[r, "\tilde{\Delta}_\zeta"'] & \Omega_J^{0, *}(X, E)
		\end{tikzcd}
	\end{center}
	where $\slashed{D}_{J_\zeta}$ is the Dirac operator on $\Omega_{J_\zeta}^{0, *}(X, E)$ defined as in Definition \ref{Definition 3.2} and $\tilde{\Delta}_\zeta$ is defined as in Definition \ref{Definition 3.4}.
\end{lemma}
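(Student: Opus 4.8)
The plan is to relate the Dirac Laplacian $\slashed{D}_{J_\zeta}^2$ to the operator $\tilde{\Delta}_\zeta$ via the $\operatorname{Sp}(1)$-conjugation $\rho(\alpha)$, using two inputs: the Lichnerowicz/Bochner--Kodaira formula for $\slashed{D}_{J_\zeta}^2$, and Proposition~\ref{Proposition 2.6}, which governs how $\rho(\eta)$ interacts with the Clifford actions $c_\zeta$. The key observation is that $\rho(\alpha)$ carries the complex structure $J_\zeta$ to $J_{\alpha\cdot\zeta} = J_{\mathbf j} = J$, and hence should carry operators built out of $c_\zeta$ and $\nabla$ to analogous operators built out of $c_{\mathbf j}$ and $\nabla$. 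Concretely, I want to show
\begin{equation*}
\rho(\alpha) \circ \slashed{D}_{J_\zeta}^2 \circ \rho(\alpha)^{-1} = \tilde{\Delta}_\zeta
\end{equation*}
as operators on $\Omega_J^{0,*}(X,E)$, which is exactly the asserted commutativity of the diagram.

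First I would write down the Lichnerowicz formula for the Spin${}^{\mathrm c}$-Dirac operator. Since $\slashed{D}_{J_\zeta}$ is the Dirac operator built from the Clifford action $c_\zeta$ and the connection $\nabla$ on $\bigwedge T^*X_{\mathbb C}\otimes E$, the standard formula gives
\begin{equation*}
\slashed{D}_{J_\zeta}^2 = \nabla^*\nabla + \tfrac{1}{4}\,\mathrm{scal} + \mathfrak{c}_\zeta(F_E),
\end{equation*}
where $\mathfrak c_\zeta(F_E)$ is the curvature term assembled from $c_\zeta$ applied to the $E$-curvature two-form. Because $X$ is hyperK\"ahler it is Ricci-flat, so $\mathrm{scal}=0$ and that term drops out. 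Under the hypothesis $\tfrac{\sqrt{-1}}{2\pi}F_E = \omega_J\operatorname{Id}_E$, the curvature endomorphism term becomes a multiple of $c_\zeta(\omega_J)$: tracking the constants, the curvature contribution should be exactly $-2\pi\sqrt{-1}\,c_\zeta(\omega_J)$ (this is where I must be careful with the factor of $\sqrt 2$ built into $c_\zeta$ and the normalization $\tfrac{\sqrt{-1}}{2\pi}$). So
\begin{equation*}
\slashed{D}_{J_\zeta}^2 = \nabla^*\nabla - 2\pi\sqrt{-1}\,c_\zeta(\omega_J).
\end{equation*}

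Next I would conjugate by $\rho(\alpha)$. The operator $\nabla^*\nabla$ is built from the Levi-Civita connection and the hyperK\"ahler metric, both $\operatorname{Sp}(1)$-invariant; since $\rho(\alpha)$ is a fibrewise unitary operator induced from the hypercomplex structure, I expect $\rho(\alpha)$ to commute with $\nabla^*\nabla$ (this should follow from Verbitsky's results that the $\operatorname{Sp}(1)$-action intertwines the relevant Laplacian-type operators, the same mechanism underlying Theorem~\ref{Theorem 3.1}). For the curvature term, Proposition~\ref{Proposition 2.6} gives $\rho(\alpha)\circ c_\zeta(\beta)\circ\rho(\alpha)^{-1} = c_{\alpha\cdot\zeta}(\rho(\alpha)\beta)$ for a one-form $\beta$, and the same conjugation rule extends to the Clifford action of the two-form $\omega_J$. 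Since $\alpha\cdot\zeta=\mathbf j$ and $\rho(\alpha)$ acts on the base two-form $\omega_J$ by the rotation carrying $J_\zeta$-data to $J$-data, conjugating $c_\zeta(\omega_J)$ yields $c_{\mathbf j}(\omega_\zeta)$. Assembling the two pieces produces exactly $\nabla^*\nabla - 2\pi\sqrt{-1}\,c_{\mathbf j}(\omega_\zeta) = \tilde{\Delta}_\zeta$, as required.

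The main obstacle will be the curvature term: both verifying the precise constant $-2\pi\sqrt{-1}$ in the Lichnerowicz formula under the given normalization of $c_\zeta$ and $F_E$, and, more substantially, confirming that conjugating $c_\zeta(\omega_J)$ by $\rho(\alpha)$ returns $c_{\mathbf j}(\omega_\zeta)$ rather than some other two-form. The subtlety is that $\rho(\alpha)$ acts on the spinor/form indices (rotating $J_\zeta$ to $J$) while the two-form argument $\omega_J$ is \emph{fixed}; the conjugation rule of Proposition~\ref{Proposition 2.6} must be applied to the full two-form, and I need to check that $\rho(\alpha)$ sends the \emph{form} $\omega_J$ to $\omega_\zeta$ under the relevant identification, so that the roles of $\zeta$ and $\mathbf j$ are exchanged correctly. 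I would verify this by reducing to the action on one-forms via the Clifford algebra structure and keeping careful track of how the $\operatorname{Sp}(1)$-rotation permutes $\{\omega_I,\omega_J,\omega_K\}$.
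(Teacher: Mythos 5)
Your proposal is correct and follows essentially the same route as the paper: the Lichnerowicz formula plus Ricci-flatness to get $\slashed{D}_{J_\zeta}^2 = \nabla^*\nabla - 2\pi\sqrt{-1}\,c_\zeta(\omega_J)$, invariance of $\nabla^*\nabla$ under $\rho$ (which the paper isolates as Lemma \ref{Lemma 3.6}, proved from $\nabla^g\omega_\zeta = 0$), and Proposition \ref{Proposition 2.6} to conjugate the curvature term. The point you flag as needing verification --- that $\rho(\alpha)$ sends the form $\omega_J$ to $\omega_\zeta$ --- indeed holds: with the paper's conventions $\rho$ acts on the span of $\omega_I, \omega_J, \omega_K$ by the \emph{inverse} adjoint action (since $\operatorname{ad}(I)\omega_J = -2\omega_K$ while $[\mathbf{i},\mathbf{j}] = 2\mathbf{k}$), so $\rho(\alpha)\omega_{\mathbf{j}} = \omega_{\alpha^{-1}\cdot\mathbf{j}} = \omega_\zeta$, exactly as your conclusion requires.
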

To prove Lemma \ref{Lemma 3.5}, we need another useful lemma.

\begin{lemma}
	\label{Lemma 3.6}
	Let $E$ be a Hermitian vector bundle over $X$ with a unitary connection $\nabla$.  The covariant Laplacian $\nabla^*\nabla$ commutes with the action $\rho^{\operatorname{Sp}(4, \mathbb{C})}$ on $\Omega^*(X, E)$ as in Proposition \ref{Proposition 2.1}.
\end{lemma}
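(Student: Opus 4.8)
The plan is to reduce the statement to the Lie algebra level and then exploit the parallelism of the hyperK\"ahler data. Since $\operatorname{Sp}(4, \mathbb{C})$ is connected and $\rho^{\operatorname{Sp}(4, \mathbb{C})}$ is obtained by exponentiating $\rho^{\mathfrak{sp}(4, \mathbb{C})}$, it suffices to prove that $\nabla^*\nabla$ commutes with each of the ten generating operators in (\ref{Equation 2.1}); once $[\nabla^*\nabla, A] = 0$ for such a generator $A$, it follows that $\nabla^*\nabla$ commutes with every $\exp(t A)$ and hence with the whole connected group. I would therefore first record the standard identity $\nabla^*\nabla = -\operatorname{tr}_g \nabla^2$, where $\nabla^2_{u, v} = \nabla_u \nabla_v - \nabla_{\nabla^g_u v}$ is the second covariant derivative on $\bigwedge T^*X_\mathbb{C} \otimes E$ induced by $\nabla$ and $\nabla^g$.

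The key observation is that every operator in (\ref{Equation 2.1}) is a \emph{parallel} bundle endomorphism, i.e. commutes with $\nabla_v$ for every vector field $v$. Indeed, each generator has the form $A \otimes \operatorname{Id}_E$, where $A$ is an endomorphism of $\bigwedge T^*X_\mathbb{C}$ built algebraically (by wedging, contraction, the metric, and the complex structures) out of the tensors $g, \omega_I, \omega_J, \omega_K$ together with the form-degree grading. On a hyperK\"ahler manifold the holonomy lies in $\operatorname{Sp}(n)$, so $I, J, K$ and the K\"ahler forms $\omega_I, \omega_J, \omega_K$ are all $\nabla^g$-parallel, while $\nabla^g$ preserves the metric and the form degree. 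Consequently $L_{\omega_\zeta}$ (wedging with a parallel form), $\Lambda_{\omega_\zeta}$ (its metric adjoint, equivalently contraction with a parallel bivector), $\operatorname{ad}(J_\zeta)$ (the scalar action determined by the parallel type decomposition) and $H$ (the degree operator) are each $\nabla^g$-parallel. Tensoring with $\operatorname{Id}_E$ and using $\nabla = \nabla^g \otimes 1 + 1 \otimes \nabla^E$ then shows that every generator is parallel with respect to $\nabla$.

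It remains to combine the two facts. If $A$ is parallel, then $[\nabla_v, A] = 0$ for all $v$, hence $[\nabla^2_{u, v}, A] = 0$ for all $u, v$, and taking the metric trace yields $[\nabla^*\nabla, A] = 0$. Applying this to each of the ten generators and exponentiating completes the argument, and since everything is algebraic in the bundle directions no compactness of $X$ is required.

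The main obstacle is isolating precisely why each generator is parallel; this is exactly where the hyperK\"ahler hypothesis is essential, as it is the $\operatorname{Sp}(n)$-holonomy that makes the entire twistor family of K\"ahler forms and complex structures covariantly constant. By contrast, once parallelism is in hand the commutation with the rough Laplacian is purely formal. I would take some care to present $\Lambda_{\omega_\zeta}$, $\operatorname{ad}(J_\zeta)$ and $H$ uniformly as algebraic expressions in parallel tensors, so that their parallelism does not need to be checked by separate ad hoc computations.
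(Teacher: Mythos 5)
Your proposal is correct and follows essentially the same route as the paper: both rest on the identity $\nabla^*\nabla = -\operatorname{tr}_g \nabla^{(2)}$ together with the $\nabla^g$-parallelism of the hyperK\"ahler data, commuting the generator past covariant derivatives and then tracing. The only (cosmetic) difference is that the paper verifies this directly just for $L_{\omega_\zeta}$ and obtains $\Lambda_{\omega_\zeta}$, $H$ and $\operatorname{ad}(J_\zeta)$ by taking Hermitian adjoints and using commutator identities such as $H = [L_{\omega_\zeta}, \Lambda_{\omega_\zeta}]$ and $\operatorname{ad}(K) = [L_{\omega_I}, \Lambda_{\omega_J}]$ with the Jacobi identity, whereas you check parallelism of all ten generators uniformly.
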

\begin{proof}
	Fix $\zeta \in \mathbb{S}^2$. We will see that it suffices to prove that $\nabla^*\nabla$ commutes with $L_{\omega_\zeta}$. Recall that $\nabla^*\nabla$ can be computed as follows. Let
	\begin{align*}
	\operatorname{tr}_g: \Gamma(X, T^*X \otimes T^*X) \to \mathcal{C}^\infty(X)
	\end{align*}
	be the composition of the isomorphism $\Gamma(X, T^*X \otimes T^*X) \overset{\cong}{\to} \Gamma(X, \operatorname{End}(TX))$ induced by the hyperK\"ahler metric $g$ of $X$ and the trace map. Let $E_0 = \bigwedge T^*X_\mathbb{C} \otimes E$ and $\nabla^{(2)}: \Gamma(X, E_0) \to \Gamma(X, T^*X \otimes T^*X \otimes E_0)$ be the second order covariant derivative associated to $\nabla$:
	\begin{align*}
	\nabla_{u, v}^{(2)}	s = \nabla_u\nabla_vs - \nabla_{\nabla_u^gv}s,
	\end{align*}
	for all $u, v \in \Gamma(X, TX)$ and $s \in \Gamma(X, E_0)$. Then $\nabla^*\nabla = -(\operatorname{tr}_g \otimes \operatorname{Id}_{E_0}) \circ \nabla^{(2)}$.\par
	Now consider any $s \in \Gamma(X, E_0)$ and $u, v \in \Gamma(X, TX)$. As $X$ is hyperK\"ahler, $\nabla^g \omega_\zeta = 0$. Then
	\begin{align*}
	\nabla_uL_{\omega_\zeta}s = \nabla_u(\omega_\zeta \wedge s) = \omega_\zeta \wedge \nabla_us = L_{\omega_\zeta}\nabla_us,
	\end{align*}
	whence $\nabla_{u, v}^{(2)}L_{\omega_\zeta}s = L_{\omega_\zeta} \nabla_{u, v}^{(2)}s$. Therefore, $[\nabla^*\nabla, L_{\omega_\zeta}] = 0$.\par
	Next, taking Hermitian adjoints, we see that $[\nabla^*\nabla, \Lambda_{\omega_\zeta}] = 0$. As $H = [L_{\omega_\zeta}, \Lambda_{\omega_\zeta}]$, $[\nabla^*\nabla, H] = 0$. Finally, note that $\operatorname{ad}(K) = [L_{\omega_I}, \Lambda_{\omega_J}]$. By the Jacobi identity, $[\nabla^*\nabla, \operatorname{ad}(K)] = 0$, and similarly $[\nabla^*\nabla, \operatorname{ad}(I)] = [\nabla^*\nabla, \operatorname{ad}(J)] = 0$. Hence, $\nabla^*\nabla$ commutes with $\rho^{\mathfrak{sp}(4, \mathbb{C})}$ and  $\rho^{\operatorname{Sp}(4, \mathbb{C})}$ as well.
\end{proof}

Similar to the operator $\tilde{\Delta}_\zeta$, $\slashed{D}_{J_\zeta}^2$ differs from $\nabla^*\nabla$ by a zeroth order term.

\begin{proof}[\myproof{Lemma}{\ref{Lemma 3.5}}]
	Since $X$ is hyperK\"ahler, it is Ricci-flat and hence has zero scalar curvature. By the Lichnerowicz formula,
	\begin{align*}
	\slashed{D}_{J_\zeta}^2 = \nabla^*\nabla - 2 \pi \sqrt{-1} c_\zeta(\omega_J).
	\end{align*}
	By Lemma \ref{Lemma 3.6}, $\nabla^*\nabla$ is invariant under the $\operatorname{Sp}(1)$-action $\rho$, whence $\rho(\alpha) \circ \nabla^*\nabla = \nabla^*\nabla \circ \rho(\alpha)$. Then by Proposition \ref{Proposition 2.6}, we can see that $\rho(\alpha)$ transforms the space $\Omega_{J_\zeta}^{0, *}(X, E)$ to $\Omega_J^{0, *}(X, E)$ and the operator $\slashed{D}_{J_\zeta}^2$ to $\tilde{\Delta}_\zeta$.
\end{proof}

The following lemma shows that there is an $\operatorname{Sp}(1)$-symmetry inherited from the $\mathfrak{sl}(2, \mathbb{C})$-action generated by the Lefschetz triple $(L_{\overline{\Omega}}, \Lambda_{\overline{\Omega}}, H_{\overline{\Omega}})$, where $\Omega = \tfrac{1}{2}(\omega_K + \sqrt{-1}\omega_I)$, intertwining $\tilde{\Delta}_\zeta$'s.

\begin{lemma}
	\label{Lemma 3.7}
	The $\operatorname{Sp}(1)$-action $\rho := \rho_\mathbf{j}^{\operatorname{Sp}(1)}$ on $\Omega_J^{0, *}(X, E)$ given as in Proposition \ref{Proposition 2.5} satisfies the condition that for all $\eta \in \operatorname{Sp}(1)$ and $\zeta \in \mathbb{S}^2$, the following diagram commutes:
	\begin{center}
		\begin{tikzcd}
			\Omega_J^{0, *}(X, E) \ar[d, "\rho(\eta)"'] \ar[r, "\tilde{\Delta}_\zeta"] & \Omega_J^{0, *}(X, E) \ar[d, "\rho(\eta)"]\\
			\Omega_J^{0, *}(X, E) \ar[r, "\tilde{\Delta}_{\eta \cdot \zeta}"'] & \Omega_J^{0, *}(X, E)
		\end{tikzcd}
	\end{center}
	where $\tilde{\Delta}_\zeta$ is defined as in Definition \ref{Definition 3.4}.
\end{lemma}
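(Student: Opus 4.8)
The plan is to prove the diagram commutes by showing directly that conjugation by $\rho(\eta)$ carries $\tilde{\Delta}_\zeta$ to $\tilde{\Delta}_{\eta \cdot \zeta}$, i.e. that $\rho(\eta) \circ \tilde{\Delta}_\zeta = \tilde{\Delta}_{\eta \cdot \zeta} \circ \rho(\eta)$ for all $\eta \in \operatorname{Sp}(1)$ and $\zeta \in \mathbb{S}^2$. Since $\tilde{\Delta}_\zeta = \nabla^*\nabla - 2\pi\sqrt{-1}\,c_\mathbf{j}(\omega_\zeta)$ by Definition \ref{Definition 3.4} and only the zeroth-order Clifford term depends on $\zeta$, I would handle the second-order part and the Clifford part separately. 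First I would observe that $\rho = \rho_\mathbf{j}^{\operatorname{Sp}(1)}$ lands inside the $\operatorname{Sp}(4,\mathbb{C})$-action of Proposition \ref{Proposition 2.1}: by Proposition \ref{Proposition 2.5} it factors as $\rho_\mathbf{j}^{\operatorname{SL}(2,\mathbb{C})} \circ \rho_1^{\operatorname{Sp}(1)}$, and $\rho_\mathbf{j}^{\operatorname{SL}(2,\mathbb{C})}$ is generated by the Lefschetz triple $(L_{\overline{\Omega}}, \Lambda_{\overline{\Omega}}, H_{\overline{\Omega}})$, which spans a copy of $\mathfrak{sl}(2,\mathbb{C})$ inside $\mathfrak{sp}(4,\mathbb{C})$. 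Lemma \ref{Lemma 3.6} then yields $\rho(\eta) \circ \nabla^*\nabla = \nabla^*\nabla \circ \rho(\eta)$. One should also note that $\rho$ preserves $\Omega_J^{0,*}(X,E)$: as $\overline{\Omega}$ is of type $(0,2)$, the operators $L_{\overline{\Omega}}, \Lambda_{\overline{\Omega}}, H_{\overline{\Omega}}$ all preserve the column $\Omega_J^{0,*}(X,E)$, so the diagram makes sense.

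The heart of the argument is the Clifford term, where I must show
\begin{equation*}
\rho(\eta) \circ c_\mathbf{j}(\omega_\zeta) \circ \rho(\eta)^{-1} = c_\mathbf{j}(\omega_{\eta \cdot \zeta}).
\end{equation*}
This I would obtain from the standard adjoint-equivariance of any Lie group representation: differentiating $\rho(\eta \exp(t\xi) \eta^{-1}) = \rho(\eta)\rho(\exp(t\xi))\rho(\eta)^{-1}$ gives $\rho(\eta)\,d\rho(\xi)\,\rho(\eta)^{-1} = d\rho(\operatorname{Ad}_\eta \xi)$ for all $\xi \in \mathfrak{sp}(1)$. It then remains to identify the infinitesimal generators: reading off Proposition \ref{Proposition 2.5} (e.g. $\rho(\mathbf{j}) = e^{\frac{\pi}{2}\sqrt{-1}H_{\overline{\Omega}}}$ together with $c_\mathbf{j}(\omega_J) = 2\sqrt{-1}H_{\overline{\Omega}}$, and likewise for $\mathbf{i}, \mathbf{k}$), and using that $\mathbf{j} = \exp(\tfrac{\pi}{2}\mathbf{j})$ in $\operatorname{Sp}(1)$, one finds $d\rho(\zeta) = \tfrac{1}{2}\,c_\mathbf{j}(\omega_\zeta)$ for $\zeta \in \mathfrak{sp}(1)$. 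Here I would invoke the linearity of both $\zeta \mapsto \omega_\zeta = a\omega_I + b\omega_J + c\omega_K$ (for $\zeta = a\mathbf{i}+b\mathbf{j}+c\mathbf{k}$) and $\alpha \mapsto c_\mathbf{j}(\alpha)$. Since $\operatorname{Ad}_\eta \zeta$ is precisely the action $\eta \cdot \zeta$ on $\mathbb{S}^2 \subset \mathfrak{sp}(1)$, the equivariance identity becomes $\rho(\eta)\,c_\mathbf{j}(\omega_\zeta)\,\rho(\eta)^{-1} = 2\,d\rho(\eta\cdot\zeta) = c_\mathbf{j}(\omega_{\eta\cdot\zeta})$.

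Finally I would combine the two pieces: conjugating $\tilde{\Delta}_\zeta$ by $\rho(\eta)$ fixes the $\nabla^*\nabla$-term and sends $c_\mathbf{j}(\omega_\zeta)$ to $c_\mathbf{j}(\omega_{\eta\cdot\zeta})$, so $\rho(\eta)\,\tilde{\Delta}_\zeta\,\rho(\eta)^{-1} = \tilde{\Delta}_{\eta\cdot\zeta}$, which is exactly the asserted commuting diagram. I expect the main obstacle to be the normalization and equivariance bookkeeping in the Clifford step: confirming that the same operators $c_\mathbf{j}(\omega_I), c_\mathbf{j}(\omega_J), c_\mathbf{j}(\omega_K)$ that generate $\rho$ are, up to the factor $\tfrac12$, the images under $d\rho$ of $\mathbf{i}, \mathbf{j}, \mathbf{k}$, so that $\zeta \mapsto c_\mathbf{j}(\omega_\zeta)$ genuinely intertwines the adjoint action on $\mathfrak{sp}(1)$ with conjugation by $\rho$. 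The constant $\tfrac12$ is harmless because conjugation is linear, and, as in Theorem \ref{Theorem 1.1}, no compactness hypothesis on $X$ enters the argument.
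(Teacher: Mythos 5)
Your proposal is correct and follows essentially the same route as the paper: split $\tilde{\Delta}_\zeta$ into $\nabla^*\nabla$, handled by Lemma \ref{Lemma 3.6}, and the Clifford term, handled by recognizing $\zeta \mapsto \tfrac{1}{2}c_\mathbf{j}(\omega_\zeta)$ as the $\mathfrak{sp}(1)$-representation integrating to $\rho_\mathbf{j}^{\operatorname{Sp}(1)}$ (Proposition \ref{Proposition 2.5}) and applying adjoint-equivariance. The paper's proof leaves the equivariance step implicit after expanding $\tfrac{1}{2}c_\mathbf{j}(\omega_\zeta)$ in the Lefschetz-triple generators; you have merely made that bookkeeping explicit.
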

\begin{proof}
	Consider any $\eta \in \operatorname{Sp}(1)$ and $\zeta = \zeta_I\mathbf{i} + \zeta_J\mathbf{j} + \zeta_K \mathbf{k} \in \mathbb{S}^2 \subset \mathfrak{sp}(1)$. Note that
	\begin{align*}
	\tfrac{1}{2} c_\mathbf{j}(\omega_\zeta) = \zeta_J \cdot \sqrt{-1} H_{\overline{\Omega}} + \zeta_K \cdot (L_{\overline{\Omega}} - \Lambda_{\overline{\Omega}}) + \zeta_I \cdot \sqrt{-1} (L_{\overline{\Omega}} + \Lambda_{\overline{\Omega}}).
	\end{align*}
	It follows from Proposition \ref{Proposition 2.5} that $\rho(\eta) \circ c_\mathbf{j}(\omega_\zeta) = c_\mathbf{j}(\omega_{\eta \cdot \zeta}) \circ \rho(\eta)$. On the other hand, by Lemma \ref{Lemma 3.6}, we have $\rho(\eta) \circ \nabla^*\nabla = \nabla^*\nabla \circ \rho(\eta)$. We are done.
\end{proof}

We shall finish the proof of Theorem \ref{Theorem 1.1}.

\begin{proof}[\myproof{Theorem}{\ref{Theorem 1.1}}]
	By transitivity of the $\operatorname{Sp}(1)$-action on $\mathbb{S}^2 \subset \mathfrak{sp}(1)$, for each $\zeta \in \mathbb{S}^2$ we can pick $\alpha_\zeta \in \operatorname{Sp}(1)$ such that $\alpha_\zeta \cdot \zeta = \mathbf{j}$. Then for all $\eta \in \operatorname{Sp}(1)$, define $\chi(\eta)$ as the composition
	\begin{center}
		\begin{tikzcd}
			\Omega_{J_\zeta}^{*, *}(X, E) \ar[r, "\rho(\alpha_\zeta)"] & \Omega_J^{*, *}(X, E) \ar[r, "\rho_\mathbf{j}(\eta)"] & \Omega_J^{*, *}(X, E) \ar[rr, "(\rho(\alpha_{\eta \cdot \zeta}))^{-1}"] && \Omega_{J_{\eta \cdot \zeta}}^{*, *}(X, E)
		\end{tikzcd}
	\end{center}
	where $\rho := \rho^{\operatorname{Sp}(1)}$ and $\rho_\mathbf{j} = \rho_\mathbf{j}^{\operatorname{Sp}(1)}$. Then we can easily see that $\chi: \operatorname{Sp}(1) \to \operatorname{End}(\Omega^*(X, E))$ is an action and for all $\eta \in \operatorname{Sp}(1)$ and $\zeta \in \mathbb{S}^2$, $\chi(\eta)(\Omega_{J_\zeta}^{0, *}(X, E)) = \Omega_{J_{\eta \cdot \zeta}}^{0, *}(X, E)$. Eventually, by Lemmas \ref{Lemma 3.5} and \ref{Lemma 3.7}, for all $\eta \in \mathbb{S}^2$ and $\eta \in \operatorname{Sp}(1)$, we have
	\begin{align*}
		\chi(\eta) \circ \slashed{D}_{J_\zeta}^2 = \slashed{D}_{J_{\eta \cdot \zeta}}^2 \circ \chi(\eta).
	\end{align*}
\end{proof}

\begin{remark}
	\label{Remark 3.8}
	Our approach in the proof of Theorem \ref{Theorem 1.1} can also be used to reprove Theorem \ref{Theorem 3.1}. We give a sketch as follows. Let $E$ be a hyperholomorphic bundle over $X$. Again, we can apply the Lichnerowicz formula to express $\slashed{D}_{J_\zeta}^2$ in terms of the covariant Laplacian $\nabla^*\nabla$ and its Weitzenb\"ock remainder. By Lemma \ref{Lemma 3.6}, $\nabla^*\nabla$ is invariant under the $\operatorname{Sp}(1)$-action $\rho^{\operatorname{Sp}(1)}$ given as in (\ref{Equation 2.2}). The Weitzenb\"ock remainder of $\slashed{D}_{J_\zeta}^2$ involves the curvature $F_E$, which is also invariant under $\rho^{\operatorname{Sp}(1)}$ by Proposition 1.2 in \cite{Ver1996}. Then by Proposition \ref{Proposition 2.6}, $\rho^{\operatorname{Sp}(1)}$ intertwines the operators $\slashed{D}_{J_\zeta}^2$.
\end{remark}

What follows from Theorem \ref{Theorem 1.1} is the following vanishing theorem.

\begin{corollary}
	($=$ Corollary \ref{Corollary 1.2})
	With the same assumption as in Theorem \ref{Theorem 1.1}, for all $\zeta \in \mathbb{S}^2$, we have
	\begin{align*}
	\ker \slashed{D}_{J_\zeta}^- = 0,
	\end{align*}
	where $\slashed{D}_{J_\zeta}^-$ is the restriction of $\slashed{D}_{J_\zeta}$ on the odd degree component $\Omega_{J_\zeta}^{0, 2*+1}(X, E)$.
\end{corollary}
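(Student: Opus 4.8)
The plan is to exploit the $\operatorname{Sp}(1)$-symmetry of Theorem \ref{Theorem 1.1} to reduce the vanishing to the single distinguished complex structure $J = J_\mathbf{j}$, for which $E$ is genuinely holomorphic, and then invoke the Kodaira Vanishing Theorem. The crucial point is that the curvature condition $\tfrac{\sqrt{-1}}{2\pi}F_E = \omega_J \operatorname{Id}_E$ makes $F_E$ of type $(1,1)$ only with respect to $\pm J$; for a generic $\zeta \in \mathbb{S}^2$ the bundle $E$ admits no $J_\zeta$-holomorphic structure, so Kodaira cannot be applied to $J_\zeta$ directly, and the symmetry is exactly the device that transports a vanishing at one point of the twistor sphere to the whole of $\mathbb{S}^2$.

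First I would note that $\slashed{D}_{J_\zeta}$ is formally self-adjoint, being $\sqrt{2}(\overline{\partial}_{J_\zeta} + \overline{\partial}_{J_\zeta}^*)$, so that $\ker \slashed{D}_{J_\zeta}^- = \ker \slashed{D}_{J_\zeta}^2 \cap \Omega_{J_\zeta}^{0, 2*+1}(X, E)$. Next I would check that the intertwiner $\chi(\eta)$ built in the proof of Theorem \ref{Theorem 1.1} preserves the $\mathbb{Z}_2$-grading given by the parity of $q$: this holds because $\rho^{\operatorname{Sp}(1)}$ preserves the bidegree while $\rho_\mathbf{j}^{\operatorname{Sp}(1)}$ is assembled from $L_{\overline{\Omega}}, \Lambda_{\overline{\Omega}}, H_{\overline{\Omega}}$, which shift the antiholomorphic degree by $0$ or $\pm 2$. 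Hence, choosing $\eta \in \operatorname{Sp}(1)$ with $\eta \cdot \zeta = \mathbf{j}$, the isomorphism $\chi(\eta)$ restricts to an isomorphism $\ker \slashed{D}_{J_\zeta}^- \cong \ker \slashed{D}_J^-$, and it suffices to prove $\ker \slashed{D}_J^- = 0$.

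For $\zeta = \mathbf{j}$ the curvature $F_E = -2\pi\sqrt{-1}\,\omega_J \operatorname{Id}_E$ is of type $(1,1)$, so $\overline{\partial}_J^2 = 0$ and $\nabla$ is the Chern connection of the holomorphic bundle $(E, \overline{\partial}_J)$. Assuming $X$ compact, Hodge theory identifies $\ker \slashed{D}_J^-$ with $\bigoplus_{q \text{ odd}} H_{\overline{\partial}_J}^{0, q}(X, E) \cong \bigoplus_{q \text{ odd}} H^q(X, \mathcal{O}(E))$. Since $\tfrac{\sqrt{-1}}{2\pi}F_E = \omega_J \operatorname{Id}_E$ is Nakano-positive and the canonical bundle $K_X$ of the hyperK\"ahler manifold $X$ is holomorphically trivial, the Kodaira--Nakano vanishing theorem yields $H^q(X, \mathcal{O}(E)) \cong H^q(X, K_X \otimes \mathcal{O}(E)) = 0$ for all $q > 0$, in particular for all odd $q$. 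Therefore $\ker \slashed{D}_J^- = 0$, and the reduction above gives $\ker \slashed{D}_{J_\zeta}^- = 0$ for every $\zeta \in \mathbb{S}^2$.

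The main obstacle is not the vanishing itself but the reduction: because $E$ is holomorphic only for $J$, the substance of the corollary is precisely that Theorem \ref{Theorem 1.1} spreads the Kodaira vanishing available at $\zeta = \mathbf{j}$ over the entire twistor family. The two points to verify with care are that $\chi(\eta)$ respects the even/odd decomposition, so that the odd-degree kernels correspond, and that the curvature sign convention makes $E$ positive rather than negative, so that it is the \emph{odd} cohomology that vanishes while the even cohomology, containing $H^0(X, \mathcal{O}(E)) \neq 0$, need not. Compactness of $X$ enters both through Hodge theory and through Kodaira vanishing.
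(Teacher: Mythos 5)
Your proposal is correct and follows essentially the same route as the paper's proof: establish the vanishing at the distinguished complex structure $J$ by a Kodaira-type vanishing theorem, then transport it to every $J_\zeta$ via the $\operatorname{Sp}(1)$-symmetry $\chi$ of Theorem~\ref{Theorem 1.1}, using that $\chi$ preserves the $\mathbb{Z}_2$-grading in the antiholomorphic degree (a fact the paper merely asserts and you actually justify, correctly, from the structure of $\rho^{\operatorname{Sp}(1)}$ and $\rho_\mathbf{j}^{\operatorname{Sp}(1)}$). The one point where the mechanics differ is at $\zeta = \mathbf{j}$: the paper applies the Akizuki--Nakano argument directly to $\slashed{D}_J^2$ on $\Omega_J^{2n, q}(X, E)$ and then passes to $\Omega_J^{0, q}(X, E)$ via Lemma~\ref{Lemma 3.14}, i.e.\ via the operator $e^{-\frac{\pi}{2}(L_{\Omega} - \Lambda_{\Omega})}$ commuting with $\slashed{D}_J^2$, whereas you pass through Hodge theory, the holomorphic triviality of $K_X$ (by $\Omega^n$), and Kodaira--Nakano vanishing --- two implementations of the same geometric fact, so this is a cosmetic rather than substantive divergence. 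Your explicit flagging of compactness is apt: both arguments need it (for the Bochner integration and for Hodge theory), even though the corollary's statement nominally inherits only the hypotheses of Theorem~\ref{Theorem 1.1}, which exclude compactness.
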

\begin{proof}
	By Akizuki-Nakano's proof \cite{AkiNak1954} of Kodaira vanishing theorem, the kernel of the restriction of $\slashed{D}_J^2$ on $\Omega_J^{2n, q}(X, E)$ is zero for $q > 0$. Lemma \ref{Lemma 3.14}, to be proved in Subsection \ref{Subsection 3.2}, states that $e^{-\frac{\pi}{2}(L_{\Omega} - \Lambda_{\Omega})}: \Omega_J^{0, q}(X, E) \to \Omega_J^{2n, q}(X, E)$ commutes with $\slashed{D}_J^2$, where $\Omega = \tfrac{1}{2}(\omega_K + \sqrt{-1}\omega_I)$. Thus, $\ker \slashed{D}_J^2 \cap \Omega_J^{0, q}(X, E) = 0$.\par
	Now fix $\zeta \in \mathbb{S}^2$. Since the $\operatorname{Sp}(1)$-action appeared in Theorem \ref{Theorem 1.1} preserves $\mathbb{Z}_2$-grading, the kernel of the restriction of $\slashed{D}_{J_\zeta}^2$ on $\Omega_{J_\zeta}^{0, 2*+1}(X, E)$ is zero. As $\ker \slashed{D}_{J_\zeta} = \ker \slashed{D}_{J_\zeta}^2$, we have $\ker \slashed{D}_{J_\zeta}^- = 0$.
\end{proof}

\subsection{Hodge star operator intertwining $\slashed{D}_J$ and $\slashed{D}_{-J}$}
\quad\par
\label{Subsection 3.2}
An observation from Theorem \ref{Theorem 1.1} is that there is an $\mathbb{S}^1$-indexed family of isomorphisms
\begin{align*}
\chi(\mu): \Omega_J^{0, *}(X, E) \to \Omega_{-J}^{0, *}(X, E)
\end{align*}
parametrized by elements $\mu \in \operatorname{Sp}(1)$ with $\mu \cdot \mathbf{j} = -\mathbf{j}$, i.e. unit quaternions $\mu \in \mathfrak{sp}(1)$ orthogonal to $\mathbf{j}$, such that $\chi(\mu) \circ \slashed{D}_J^2 = \slashed{D}_{-J}^2 \circ \chi(\mu)$. In this subsection, we shall show that even for an arbitrary Hermitian vector bundle $E$ over $X$ with a unitary connection $\nabla$, there still exists an $\mathbb{S}^1$-indexed family of isomorphisms naturally parametrized by such $\mu$'s intertwining $\slashed{D}_J$ and $\slashed{D}_{-J}$, and these isomorphisms are induced by the Hodge star operator on the hyperK\"ahler manifold $X$.

\begin{theorem}
	\label{Theorem 3.10}
	Let $X$ be a hyperK\"ahler manifold and $E$ be a Hermitian vector bundle over $X$ with a unitary connection $\nabla$. Then for all $p \in \mathbb{N}$, the following diagram commutes:
	\begin{center}
		\begin{tikzcd}
			\Omega_J^{p, *}(X, E) \ar[d, "\chi(\mathbf{k})"'] \ar[r, "\slashed{D}_J"] & \Omega_J^{p, *}(X, E) \ar[d, "\chi(\mathbf{k})"]\\
			\Omega_{-J}^{p, *}(X, E) \ar[r, "\slashed{D}_{-J}"'] & \Omega_{-J}^{p, *}(X, E)
		\end{tikzcd}
	\end{center}
	where $\chi(\mathbf{k}) := \rho^{\operatorname{Sp}(1)}(\mathbf{k}) \circ \rho_\mathbf{j}^{\operatorname{Sp}(1)}(\mathbf{k})$, and $\rho^{\operatorname{Sp}(1)}$, $\rho_\mathbf{j}^{\operatorname{Sp}(1)}$ are the $\operatorname{Sp}(1)$-actions given as in (\ref{Equation 2.2}) and in Proposition \ref{Proposition 2.5} respectively.
\end{theorem}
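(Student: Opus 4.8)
The plan is to reduce the commutativity of the diagram to a single pointwise identity in the Clifford algebra, and then to verify that identity by tracking how the two factors of $\chi(\mathbf{k})$ rotate a cotangent vector. Writing $\slashed{D}_J = c_\mathbf{j} \circ \nabla$ as in Definition~\ref{Definition 3.2}, so that $\slashed{D}_J s = \sum_a c_\mathbf{j}(e^a) \nabla_{e_a} s$ for a local orthonormal coframe $\{e^a\}$ (and likewise $\slashed{D}_{-J} = c_{-\mathbf{j}} \circ \nabla$), I would first observe that $\chi(\mathbf{k})$ is a \emph{parallel} endomorphism of $\bigwedge T^*X_\mathbb{C} \otimes E$. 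Indeed, $\rho^{\operatorname{Sp}(1)}(\mathbf{k}) = e^{\frac{\pi}{2} \operatorname{ad}(K)}$ is built from $\operatorname{ad}(K)$ and $\rho_\mathbf{j}^{\operatorname{Sp}(1)}(\mathbf{k}) = e^{\frac{\pi}{2}(L_{\overline{\Omega}} - \Lambda_{\overline{\Omega}})}$ from $L_{\overline{\Omega}}, \Lambda_{\overline{\Omega}}$; since $X$ is hyperK\"ahler, $K$ and the forms $\Omega, \overline{\Omega}$ are $\nabla^g$-parallel, so each of these operators satisfies $\nabla \circ \Phi = (\operatorname{Id}_{T^*X_\mathbb{C}} \otimes \Phi) \circ \nabla$ — exactly the computation already carried out for $L_{\omega_\zeta}$ in the proof of Lemma~\ref{Lemma 3.6}. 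Hence $\chi(\mathbf{k}) \nabla_{e_a} s = \nabla_{e_a}(\chi(\mathbf{k}) s)$, and the theorem follows at once from the fibrewise identity
\begin{equation*}
\chi(\mathbf{k}) \circ c_\mathbf{j}(\alpha) \circ \chi(\mathbf{k})^{-1} = c_{-\mathbf{j}}(\alpha), \qquad \alpha \in T^*X_\mathbb{C},
\end{equation*}
since then $\chi(\mathbf{k}) \slashed{D}_J s = \sum_a c_{-\mathbf{j}}(e^a) \nabla_{e_a}(\chi(\mathbf{k}) s) = \slashed{D}_{-J}(\chi(\mathbf{k}) s)$.

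To prove this identity I would factor $\chi(\mathbf{k}) = \rho^{\operatorname{Sp}(1)}(\mathbf{k}) \circ \rho_\mathbf{j}^{\operatorname{Sp}(1)}(\mathbf{k})$ and conjugate in two stages. For the inner factor, the relations of Subsection~\ref{Subsection 2.3} give $L_{\overline{\Omega}} - \Lambda_{\overline{\Omega}} = \tfrac{1}{2} c_\mathbf{j}(\omega_K)$, and a direct Clifford computation shows that $\operatorname{ad}_{c_\mathbf{j}(\omega_K)}$ acts on $T^*X_\mathbb{C}$ as $-2K$ (the skew endomorphism associated with $\omega_K$ being $K$ itself). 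Thus conjugation by $\rho_\mathbf{j}^{\operatorname{Sp}(1)}(\mathbf{k}) = e^{\frac{\pi}{4} c_\mathbf{j}(\omega_K)}$ rotates a cotangent vector by $e^{-\frac{\pi}{2} K} = -K$, giving $\rho_\mathbf{j}^{\operatorname{Sp}(1)}(\mathbf{k}) \, c_\mathbf{j}(\alpha) \, \rho_\mathbf{j}^{\operatorname{Sp}(1)}(\mathbf{k})^{-1} = c_\mathbf{j}(-K\alpha)$. For the outer factor, Proposition~\ref{Proposition 2.6} gives $\rho^{\operatorname{Sp}(1)}(\mathbf{k}) \, c_\mathbf{j}(\beta) \, \rho^{\operatorname{Sp}(1)}(\mathbf{k})^{-1} = c_{\mathbf{k} \cdot \mathbf{j}}(\rho^{\operatorname{Sp}(1)}(\mathbf{k})\beta) = c_{-\mathbf{j}}(K\beta)$, using $\mathbf{k} \cdot \mathbf{j} = -\mathbf{j}$ together with the fact that, by~(\ref{Equation 2.2}), $\rho^{\operatorname{Sp}(1)}(\mathbf{k})$ acts on one-forms as the complex structure $K$. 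Composing the two stages with $\beta = -K\alpha$ yields $c_{-\mathbf{j}}(-K^2 \alpha) = c_{-\mathbf{j}}(\alpha)$; the two quarter turns cancel precisely because $K^2 = -\operatorname{Id}$.

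Finally, to match the diagram I would check that $\chi(\mathbf{k})$ carries $\Omega_J^{p,*}(X, E)$ onto $\Omega_{-J}^{p,*}(X, E)$: the factor $\rho_\mathbf{j}^{\operatorname{Sp}(1)}(\mathbf{k})$ changes only the antiholomorphic degree and so preserves $p$, while $\rho^{\operatorname{Sp}(1)}(\mathbf{k})$, acting on one-forms as $K$ and hence interchanging $T_J^{*(1,0)}X$ and $T_J^{*(0,1)}X$, sends $\Omega_J^{a,b}$ to $\Omega_J^{b,a} = \Omega_{-J}^{a,b}$; together they map $\Omega_J^{p,*}$ onto $\Omega_J^{*,p} = \Omega_{-J}^{p,*}$. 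Since $\slashed{D}_{\pm J}$ preserves $\Omega_{\pm J}^{p,*}$, this completes the argument. The step demanding genuine care — and the main obstacle — is the determination of the two rotation constants: one must confirm that $\operatorname{ad}_{c_\mathbf{j}(\omega_K)} = -2K$ and that $\rho^{\operatorname{Sp}(1)}(\mathbf{k})$ acts as exactly $K$ (rather than $-K$) on one-forms, since a sign error in either would leave a residual $-\operatorname{Id}$ and destroy the intertwining. Both constants are pinned down by the normalization $c_\mathbf{j}(\alpha) = \sqrt{2}(\alpha^{0,1} \wedge - (g^{-1})^\sharp(\alpha^{1,0}) \lrcorner)$ and the convention in~(\ref{Equation 2.2}), and are most safely verified by evaluating each conjugation on $\alpha \in \Omega_J^{1,0}$ and on $\alpha \in \Omega_J^{0,1}$ separately.
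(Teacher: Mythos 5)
Your proposal is correct, but it takes a genuinely different route from the paper. The paper never works fibrewise with the Clifford algebra: instead it factors $\chi(\mathbf{k}) = e^{-\frac{\pi}{2}(L_{\overline{\Omega}} - \Lambda_{\overline{\Omega}})} \circ *$, where $*$ is the sign-twisted Hodge star, via Weil's formula in the guise of Cavalcanti's identity $* = e^{\frac{\pi}{2}(L_{\omega_K} - \Lambda_{\omega_K})} \circ \rho(\mathbf{k})$ (Lemma \ref{Lemma 3.11}), then shows that $*$ conjugates $\overline{\partial}_J, \overline{\partial}_J^*$ into $\overline{\partial}_{-J}^*, \overline{\partial}_{-J}$ (Lemma \ref{Lemma 3.12}), and finally uses the K\"ahler-type identities $[L_{\omega_\zeta}, d^*] = d^{J_\zeta}$ to prove $[L_{\overline{\Omega}}, \slashed{D}_{-J}] = [\Lambda_{\overline{\Omega}}, \slashed{D}_{-J}] = 0$, hence that the Lefschetz exponential commutes with $\slashed{D}_{-J}$ (Lemmas \ref{Lemma 3.13}, \ref{Lemma 3.14}). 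You instead reduce everything to the pointwise identity $\chi(\mathbf{k}) c_\mathbf{j}(\alpha) \chi(\mathbf{k})^{-1} = c_{-\mathbf{j}}(\alpha)$ together with parallelism of $\chi(\mathbf{k})$, and your two rotation constants do check out against the paper's conventions: with $c_\mathbf{j}(\alpha)c_\mathbf{j}(\beta) + c_\mathbf{j}(\beta)c_\mathbf{j}(\alpha) = -2g(\alpha, \beta)$ and the standard quantization of $2$-forms one gets $[c_\mathbf{j}(\omega_K), c_\mathbf{j}(\alpha)] = -2c_\mathbf{j}(K\alpha)$ ($K$ acting by transpose on $1$-forms), which is consistent with the paper's normalization $c_\mathbf{j}(\omega_J) = 2\sqrt{-1}H_{\overline{\Omega}}$, while $\rho^{\operatorname{Sp}(1)}(\mathbf{k})$ acts on $1$-forms as $+K$ by (\ref{Equation 2.2}) and Proposition \ref{Proposition 2.6} with $\mathbf{k}\cdot\mathbf{j} = -\mathbf{j}$. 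Your argument is more elementary and self-contained — it needs neither Weil's formula nor the K\"ahler identities — and it also explains transparently the paper's warning that neither factor of $\chi(\mathbf{k})$ alone intertwines the Dirac operators: each factor twists the Clifford action by $\mp K$, and only the composition cancels. What the paper's longer route buys is its intermediate lemmas, notably Lemma \ref{Lemma 3.14}, which is reused in the proof of Corollary \ref{Corollary 1.2}, and the conceptual link between $\chi(\mathbf{k})$ and the generalized-geometry Hodge star; your proof, taken alone, would leave Corollary \ref{Corollary 1.2} needing a substitute for that lemma.
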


Clearly, this theorem still holds when $\mathbf{k}$ is replaced by any $\mu \in \operatorname{Sp}(1)$ such that $\mu \cdot \mathbf{j} = -\mathbf{j}$.\par
Write $\rho := \rho^{\operatorname{Sp}(1)}$ and $\rho_\mathbf{j} := \rho_\mathbf{j}^{\operatorname{Sp}(1)}$. Since $\chi(\mathbf{k})$ is written as the composition of two actions $\Omega_J^{p, q}(X, E) \overset{\rho_\mathbf{j}(\mathbf{k})}{\longrightarrow} \Omega_J^{p, 2n-q}(X, E) \overset{\rho(\mathbf{k})}{\longrightarrow} \Omega_{-J}^{p, 2n-q}(X, E)$, one might expect to prove Theorem \ref{Theorem 3.10} by claiming that $\rho_\mathbf{j}(\mathbf{k})$ commutes with $\slashed{D}_J$ and $\rho(\mathbf{k})$ intertwines $\slashed{D}_J$ and $\slashed{D}_{-J}$. Nonetheless, neither the former part nor the latter part of this claim is true in general. Instead, we shall express $\chi(\mathbf{k})$ as the composition of other two operators. To realize it, we need to consider another operator $*$ acting on $\Omega^*(X, E)$, which is essentially the Hodge star operator $\star$ for $(X, g)$ but up to signs on different degree components of $\Omega^*(X, E)$. It is given as follows. For $k \in \mathbb{N}$ and $s \in \Omega^k(X, E)$,
\begin{equation}
\label{Equation 3.5}
* s = (-1)^{\frac{1}{2}k(k+1)} \star s.
\end{equation}
The geometric meaning of $*$ comes from generalized K\"ahler geometry - $*$ serves as a generalization of $\star$. Interested readers are referred to \cite{Cav2006, Gua2004}.\par
Now we have the following lemma.

\begin{lemma}
	\label{Lemma 3.11}
	For all $p, q \in \mathbb{N}$, the following diagram commutes:
	\begin{equation}
	\label{Equation 3.6}
	\begin{tikzcd}
	\Omega_J^{p, q}(X, E) \ar[d, "\rho_\mathbf{j}(\mathbf{k})"'] \ar[r, "*"] & \Omega_J^{2n-q, 2n-p}(X, E)\\
	\Omega_J^{p, 2n-q}(X, E) \ar[r, "\rho(\mathbf{k})"'] & \Omega_J^{2n-q, p}(X, E) \ar[u, "\rho_\mathbf{j}(\mathbf{k})"']
	\end{tikzcd}
	\end{equation}
	In other words, letting $\Omega = \tfrac{1}{2}(\omega_K + \sqrt{-1}\omega_I)$, we have
	\begin{align*}
	* = e^{\frac{\pi}{2} ( L_{\overline{\Omega}} - \Lambda_{\overline{\Omega}} )} \circ \rho(\mathbf{k}) \circ e^{\frac{\pi}{2} ( L_{\overline{\Omega}} - \Lambda_{\overline{\Omega}} )}.
	\end{align*}
\end{lemma}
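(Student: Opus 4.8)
The plan is to prove the operator identity $* = \rho_\mathbf{j}(\mathbf{k}) \circ \rho(\mathbf{k}) \circ \rho_\mathbf{j}(\mathbf{k})$ fiberwise. Each operator involved---the modified Hodge star $*$, the Clifford exponential $\rho_\mathbf{j}(\mathbf{k}) = e^{\frac{\pi}{2}(L_{\overline{\Omega}} - \Lambda_{\overline{\Omega}})}$ from Proposition \ref{Proposition 2.5}, and $\rho(\mathbf{k}) = \rho^{\operatorname{Sp}(1)}(\mathbf{k})$---is a zeroth-order operator of the form (an endomorphism of $\bigwedge T^*X_\mathbb{C}$) $\otimes\, \operatorname{Id}_E$, so the identity may be checked pointwise on the exterior algebra $\bigwedge T^*_xX_\mathbb{C}$ of a single hyperK\"ahler vector space, with $E$ playing no role. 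As a preliminary sanity check I would confirm the bidegree bookkeeping of diagram (\ref{Equation 3.6}): $\rho_\mathbf{j}(\mathbf{k})$, being the $\tfrac{\pi}{2}$-Weyl rotation of the $\overline{\Omega}$-Lefschetz $\mathfrak{sl}(2,\mathbb{C})$, fixes $p$ and sends $q \mapsto 2n-q$; $\rho(\mathbf{k})$ implements the flip $J \mapsto -J$ and so reinterprets a numerical bidegree $(a,b)$ via $\Omega_{-J}^{a,b} = \Omega_J^{b,a}$; and the triple composite lands in $\Omega_J^{2n-q,2n-p}$, matching the target of $\star\colon \Omega_J^{p,q}\to\Omega_J^{2n-q,2n-p}$.

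For the equality itself I would invoke the $\overline{\Omega}$-primitive decomposition of Proposition \ref{Proposition 2.4}: since every form is a sum of $L_{\overline{\Omega}}^i$ applied to $\overline{\Omega}$-primitive forms, it suffices to verify the identity on a primitive $s \in \Omega_{J,\operatorname{prim}}^{p,q}$ and then propagate it up each Lefschetz tower. On such $s$ the right-hand side simplifies: because $\Lambda_{\overline{\Omega}}$ annihilates primitives, the first $\rho_\mathbf{j}(\mathbf{k})$ acts as the $\mathfrak{sl}(2,\mathbb{C})$-Weyl element carrying the lowest-weight vector $s$ (of $H_{\overline{\Omega}}$-weight $q-n$) to an explicit scalar multiple of $L_{\overline{\Omega}}^{n-q}s$; $\rho(\mathbf{k})$ then acts through the complex-structure flip of (\ref{Equation 2.2}); and the final $\rho_\mathbf{j}(\mathbf{k})$ is again read off from the $\mathfrak{sl}(2,\mathbb{C})$-action. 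Matching the result against $*s$ is exactly the content of Weil's classical formula for the Hodge star of a primitive form, transported from the genuine K\"ahler form $\omega_K$ to the complex Lefschetz triple $(L_{\overline{\Omega}},\Lambda_{\overline{\Omega}},H_{\overline{\Omega}})$ via the relation $c_\mathbf{j}(\omega_K) = 2(L_{\overline{\Omega}}-\Lambda_{\overline{\Omega}})$ recorded in Subsection \ref{Subsection 2.3}.

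An equivalent route, which isolates the only genuine difficulty, is to note that all three operators are multiplicative under an orthogonal splitting $V = V_1 \oplus V_2$ of hyperK\"ahler vector spaces: $\rho(\mathbf{k})$ because its defining phase is additive in bidegree, $\rho_\mathbf{j}(\mathbf{k})$ because $L_{\overline{\Omega}}-\Lambda_{\overline{\Omega}}$ is a derivation and hence additive, and $*$ because the Cavalcanti--Gualtieri normalization $*s = (-1)^{\frac{1}{2}k(k+1)}\star s$ is designed to behave well under the graded tensor product \cite{Cav2006,Gua2004}. Granting this, the whole statement collapses to the single case $V = \mathbb{H}$, i.e.\ $n=1$, which is a finite explicit computation on the $16$-dimensional space $\bigwedge \mathbb{C}^4$.

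The hard part in either route is the sign bookkeeping. The factor $(-1)^{\frac{1}{2}k(k+1)}$ in the definition of $*$, the signs and factorials produced by the Weyl element $e^{\frac{\pi}{2}(L_{\overline{\Omega}}-\Lambda_{\overline{\Omega}})}$, the phase carried by $\rho(\mathbf{k})$, and the Koszul signs entering the multiplicativity step must all conspire to cancel. I expect that the cleanest way to force this cancellation is to express every factor uniformly through the Clifford action $c_\mathbf{j}$, so that $\rho_\mathbf{j}(\mathbf{k}) = e^{\frac{\pi}{4}c_\mathbf{j}(\omega_K)}$ becomes a spinorial quarter-turn and $*$ becomes Clifford multiplication by a chirality element; the identity then reads as a composition-of-rotations statement in the spin group, and the signs organize themselves. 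Carrying out that reorganization is where the real work lies.
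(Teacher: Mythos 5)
Your bidegree bookkeeping is right and matches the paper's first step, but the rest of your proposal is an outline rather than a proof: you yourself defer the ``sign bookkeeping'' to future work, and that bookkeeping is precisely the content of the lemma. The concrete gap sits in the step where you ``match the result against $*s$'' on an $\overline{\Omega}$-primitive form. Weil's classical formula computes $\star$ (hence $*$) on forms $L_{\omega_K}^j s$ with $s$ primitive \emph{with respect to the K\"ahler form} $\omega_K$, i.e.\ $\Lambda_{\omega_K} s = 0$. An $\overline{\Omega}$-primitive form of bidegree $(p, q)$ satisfies only $\Lambda_{\overline{\Omega}} s = 0$; since $\Lambda_{\omega_K} = \Lambda_{\Omega} + \Lambda_{\overline{\Omega}}$, such a form need not be $\omega_K$-primitive (e.g.\ $\Omega$ itself is $\overline{\Omega}$-primitive but not $\omega_K$-primitive), so Weil's formula does not apply to it, and the ``transport'' you invoke via $c_\mathbf{j}(\omega_K) = 2(L_{\overline{\Omega}} - \Lambda_{\overline{\Omega}})$ is exactly the nontrivial assertion to be proved, not a device for proving it. Likewise, propagating the identity up the $L_{\overline{\Omega}}$-towers requires knowing how $*$ commutes with $L_{\overline{\Omega}}$, which is not supplied by the representation theory of the two Weyl elements alone. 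Your alternative route (multiplicativity under orthogonal splittings plus an explicit $n = 1$ computation) could in principle work, but neither the multiplicativity of $*$ with its $(-1)^{\frac{1}{2}k(k+1)}$ normalization nor the $16$-dimensional computation is carried out, so it too remains a plan.

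What makes the paper's proof short is a packaged form of Weil's formula that you never state: Lemma 1.6 of \cite{Cav2012} (equation (\ref{Equation 3.7}) in the paper), namely $* = e^{\frac{\pi}{2}(L_{\omega_K} - \Lambda_{\omega_K})} \circ \rho(\mathbf{k})$, valid on all of $\Omega^*(X, E)$ with no primitivity hypothesis. Given this, the lemma follows from two purely algebraic facts: first, $\rho(\mathbf{k})$ conjugates the $\overline{\Omega}$-Lefschetz triple into the $\Omega$-Lefschetz triple, so $\rho(\mathbf{k}) \circ e^{\frac{\pi}{2}(L_{\overline{\Omega}} - \Lambda_{\overline{\Omega}})} = e^{\frac{\pi}{2}(L_{\Omega} - \Lambda_{\Omega})} \circ \rho(\mathbf{k})$; second, the $\Omega$- and $\overline{\Omega}$-triples commute (one moves holomorphic degree, the other antiholomorphic), so the two exponentials combine into $e^{\frac{\pi}{2}(L_{\omega_K} - \Lambda_{\omega_K})}$ because $\omega_K = \Omega + \overline{\Omega}$. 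If you want to salvage your approach, the right move is to prove or cite this exponential reformulation of Weil's formula first; your closing suggestion of reorganizing everything through the Clifford action is in fact the spirit of how that reformulation is established, but as written it is an expectation, not an argument.
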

\begin{proof}
	We first check whether the maps appeared in Diagram (\ref{Equation 3.6}) transform the bi-degree on $\Omega_J^{*, *}(X, E)$ in the way as shown. It is evident that $*(\Omega_J^{p, q}(X, E)) = \Omega_J^{2n-q, 2n-p}(X, E)$ and $\rho(\mathbf{k})(\Omega_J^{p, q}(X, E)) = \Omega_J^{q, p}(X, E)$. As seen from Proposition \ref{Proposition 2.5}, $\rho_\mathbf{j}(\mathbf{k}) = e^{\frac{\pi}{2} ( L_{\overline{\Omega}} - \Lambda_{\overline{\Omega}} )}$. By Proposition \ref{Proposition 2.4} and (\ref{Equation B.4}), we can check that $\rho_\mathbf{j}(\mathbf{k})(\Omega_J^{p, q}(X, E)) = \Omega_J^{p, 2n-q}(X, E)$.\par
	Now the key point in our proof is a result in generalized K\"ahler geometry (Lemma 1.6 in \cite{Cav2012}):
	\begin{equation}
	\label{Equation 3.7}
	* = e^{\frac{\pi}{2}(L_{\omega_K} - \Lambda_{\omega_K})} \circ \rho(\mathbf{k}).
	\end{equation}
	Indeed, (\ref{Equation 3.7}) is a reformulation of Weil's formula \cite{Wei1958} (c.f. Formula (2) in \cite{CatTarTom2022}). As we can easily see that $\rho(\mathbf{k}) \circ e^{\frac{\pi}{2} ( L_{\overline{\Omega}} - \Lambda_{\overline{\Omega}} )} = e^{\frac{\pi}{2} ( L_{\Omega} - \Lambda_{\Omega} )} \circ \rho(\mathbf{k})$, we eventually have
	\begin{align*}
	e^{\frac{\pi}{2} ( L_{\overline{\Omega}} - \Lambda_{\overline{\Omega}} )} \circ \rho(\mathbf{k}) \circ e^{\frac{\pi}{2} ( L_{\overline{\Omega}} - \Lambda_{\overline{\Omega}} )} = e^{\frac{\pi}{2} ( L_{\overline{\Omega}} - \Lambda_{\overline{\Omega}} )} \circ e^{\frac{\pi}{2} ( L_{\Omega} - \Lambda_{\Omega} )} \circ \rho(\mathbf{k}) = e^{\frac{\pi}{2}(L_{\omega_K} - \Lambda_{\omega_K})} \circ \rho(\mathbf{k}) = *.
	\end{align*}
\end{proof}

By the above lemma, we can rewrite $\chi(\mathbf{k})$ as
\begin{align*}
	\chi(\mathbf{k}) = \rho_\mathbf{j}(-\mathbf{k}) \circ * = e^{-\frac{\pi}{2}(L_{\overline{\Omega}} - \Lambda_{\overline{\Omega}})} \circ *.
\end{align*}
Then the proof of Theorem \ref{Theorem 3.10} breaks into two steps. The first step is to show that $*$ intertwines $\slashed{D}_J$ and $\slashed{D}_{-J}$. For simplicity, we write $\overline{\partial} := \overline{\partial}_J$, $\partial := \overline{\partial}_{-J}$.

\begin{lemma}
	\label{Lemma 3.12}
	For all $p \in \mathbb{N}$, the following diagram commutes:
	\begin{center}
		\begin{tikzcd}
			\Omega_J^{p, *}(X, E) \ar[d, "*"'] \ar[r, "\slashed{D}_J"] & \Omega_J^{p, *}(X, E) \ar[d, "*"]\\
			\Omega_{-J}^{2n-p, *}(X, E) \ar[r, "\slashed{D}_{-J}"'] & \Omega_{-J}^{2n-p, *}(X, E)
		\end{tikzcd}
	\end{center}
\end{lemma}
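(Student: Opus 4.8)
The plan is to prove the intertwining directly at the level of the first-order operators, using the Clifford-module description $\slashed{D}_{\pm J}=c_{\pm\mathbf{j}}\circ\nabla$ from Definition~\ref{Definition 3.2} together with the fact that $*$ is a pointwise, $\mathcal{C}^\infty(X)$-linear operator acting only on the exterior factor of $\bigwedge T^*X_\mathbb{C}\otimes E$. I would reduce the commutativity of the square to two facts: (a) $*$ is parallel for $\nabla$, and (b) $*$ intertwines the two Clifford actions, in the sense that $*\circ c_{\mathbf{j}}(\alpha)=c_{-\mathbf{j}}(\alpha)\circ *$ for every $\alpha\in T^*X_\mathbb{C}$. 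Granting (a) and (b), the chain
\[
*\,\slashed{D}_J = *\,c_{\mathbf{j}}\circ\nabla = c_{-\mathbf{j}}\circ(*\circ\nabla) = c_{-\mathbf{j}}\circ(\nabla\circ *) = \slashed{D}_{-J}\,*
\]
finishes the proof. Notably this route uses neither adjoints nor integration, so it needs no compactness hypothesis, in keeping with the generality of the statement.

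For (a), I would note that $*$ equals the Hodge star $\star$ of $(X,g)$ up to the degree-dependent sign $(-1)^{\frac12 k(k+1)}$ of (\ref{Equation 3.5}) and acts as the identity on $E$. Since $\star$ is $\nabla^g$-parallel (the metric and the volume form being parallel) and $\nabla$ preserves the exterior degree $k$, the sign is constant on each degree component and factors out of both sides; hence $\nabla_v\circ *=*\circ\nabla_v$ for all $v$. This step is routine.

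The heart of the matter is (b). Structurally it holds because passing from $J$ to $-J$ swaps the type decomposition, $\alpha^{1,0}_{J}=\alpha^{0,1}_{-J}$ and $\alpha^{0,1}_{J}=\alpha^{1,0}_{-J}$, so that
\[
c_{-\mathbf{j}}(\alpha)=\sqrt{2}\bigl(\alpha^{1,0}_J\wedge-(g^{-1})^\sharp(\alpha^{0,1}_J)\lrcorner\bigr)
\]
is obtained from $c_{\mathbf{j}}(\alpha)=\sqrt{2}\bigl(\alpha^{0,1}_J\wedge-(g^{-1})^\sharp(\alpha^{1,0}_J)\lrcorner\bigr)$ by interchanging the creation (wedge) and annihilation (contraction) operators. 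Under the Hodge star these two operators are precisely adjoint — $\star$ turns wedging by a one-form into contraction by its metric dual and vice versa, carrying $\Omega_J^{p,q}(X,E)$ to $\Omega_J^{2n-q,2n-p}(X,E)$ and thereby exchanging the $(1,0)$- and $(0,1)$-roles. I would verify (b) by applying the standard identities $\star(\alpha\wedge\beta)=\pm(g^{-1})^\sharp(\alpha)\lrcorner\star\beta$ and $\star((g^{-1})^\sharp(\alpha)\lrcorner\beta)=\pm\alpha\wedge\star\beta$ to the $(1,0)$- and $(0,1)$-components of $\alpha$ separately.

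The main obstacle is the sign bookkeeping inside (b): since $c_{\mathbf{j}}(\alpha)$ shifts the exterior degree by $\pm1$, the operator $*$ occurs at two different degrees on the two sides of the intertwining, so the signs $(-1)^{\frac12 k(k+1)}$ interact nontrivially with the degree-dependent signs in the wedge/contraction adjunction. I expect that the definition (\ref{Equation 3.5}) of $*$ — rather than the bare $\star$ — is tailored exactly so that these two families of signs cancel and (b) holds with no residual sign; establishing this cancellation bidegree by bidegree is the crux. As a cross-check and an alternative path, one could instead substitute the expression $*=e^{\frac{\pi}{2}(L_{\overline{\Omega}}-\Lambda_{\overline{\Omega}})}\circ\rho(\mathbf{k})\circ e^{\frac{\pi}{2}(L_{\overline{\Omega}}-\Lambda_{\overline{\Omega}})}$ from Lemma~\ref{Lemma 3.11} and reduce (b) to the commutation relations of the $\mathfrak{sl}(2,\mathbb{C})$-triple $(L_{\overline{\Omega}},\Lambda_{\overline{\Omega}},H_{\overline{\Omega}})$ against the Clifford operators.
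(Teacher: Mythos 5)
Your plan is correct, and in substance it is the symbol-level form of the paper's own proof rather than a divergent route. The paper argues with the Dolbeault description: writing $\slashed{D}_J=\sqrt{2}(\overline{\partial}+\overline{\partial}^*)$ and $\slashed{D}_{-J}=\sqrt{2}(\partial+\partial^*)$ (where $\overline{\partial}:=\overline{\partial}_J$ and $\partial:=\overline{\partial}_{-J}$), it checks from (\ref{Equation 3.5}) the two conjugation identities $\partial^*=*\,\overline{\partial}\,*^{-1}$ and $\partial=*\,\overline{\partial}^*\,*^{-1}$, so that conjugation by $*$ swaps the two summands and hence fixes the Dirac operator. Your (a) and (b) encode exactly the same facts: since $X$ is K\"ahler, $\nabla$ preserves bidegree, the wedge part of $c_{\mathbf{j}}\circ\nabla$ is $\sqrt{2}\,\overline{\partial}$ and the contraction part is $\sqrt{2}\,\overline{\partial}^*$; so splitting your claim (b) into its wedge and contraction components and using (a) to move $*$ past $\nabla$ reproduces precisely the paper's two identities. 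The sign bookkeeping you postpone is therefore the same bookkeeping the paper compresses into ``it is easy to check,'' and neither route needs compactness, since only formal adjoints (local differential operators) appear; what your packaging buys is that the only inputs are pointwise Clifford linear algebra and parallelism of the Hodge star. The one real caveat is that you leave (b) --- which is the entire content of the lemma --- as an expectation. For the record, it is true exactly as you state it, with no residual sign: $*\,c_{\mathbf{j}}(\alpha)=c_{-\mathbf{j}}(\alpha)\,*$ for all $\alpha\in T^*X_{\mathbb{C}}$, with the \emph{same} covector on both sides (in contrast with Proposition \ref{Proposition 2.6}, where conjugation also rotates the covector); the sign $(-1)^{\frac{1}{2}k(k+1)}$ in (\ref{Equation 3.5}) is precisely what cancels the degree-dependent signs in the wedge/contraction adjunction under $\star$, as a pointwise check in a holomorphic coframe (a finite algebraic computation, e.g.\ on flat $\mathbb{H}$) confirms. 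Executing that check --- or simply quoting it in the equivalent form $\partial^*=*\,\overline{\partial}\,*^{-1}$, $\partial=*\,\overline{\partial}^*\,*^{-1}$ --- completes your proof; your fallback via Lemma \ref{Lemma 3.11} and the $\mathfrak{sl}(2,\mathbb{C})$ commutation relations would also work but is heavier than necessary.
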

\begin{proof}
	By (\ref{Equation 3.5}), it is easy to check that $\partial^* = * \overline{\partial} *^{-1}$ and $\partial = * \overline{\partial}^* *^{-1}$. Then we are done as
	\begin{align*}
		\slashed{D}_J = \sqrt{2} ( \overline{\partial} + \overline{\partial}^* ) \quad \text{and} \quad \slashed{D}_{-J} = \sqrt{2} ( \partial + \partial^* ).
	\end{align*}
\end{proof}

The second step is to show that $e^{-\frac{\pi}{2}(L_{\overline{\Omega}} - \Lambda_{\overline{\Omega}})}$ commutes with $\slashed{D}_J$. We need a further lemma so as to prove this statement.

\begin{lemma}
	\label{Lemma 3.13}
	We have $[L_{\overline{\Omega}}, \slashed{D}_{-J}] = [\Lambda_{\overline{\Omega}}, \slashed{D}_{-J}] = 0$ on $\Omega^*(X, E)$.
\end{lemma}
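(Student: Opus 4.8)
The plan is to reduce the two stated commutator identities to a single one and then to establish it by an elementary Clifford-algebraic computation that exploits the type of $\overline{\Omega}$ with respect to $-J$. First I would note that, since $\Lambda_{\overline{\Omega}}$ is by definition the Hermitian adjoint of $L_{\overline{\Omega}}$ and $\slashed{D}_{-J}$ is formally self-adjoint, the identity $[\Lambda_{\overline{\Omega}}, \slashed{D}_{-J}] = 0$ is the formal adjoint of $[L_{\overline{\Omega}}, \slashed{D}_{-J}] = 0$; concretely, $[L_{\overline{\Omega}}, \slashed{D}_{-J}]^* = -[\Lambda_{\overline{\Omega}}, \slashed{D}_{-J}]$. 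Hence it suffices to prove $[L_{\overline{\Omega}}, \slashed{D}_{-J}] = 0$.

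For this, I would write $\slashed{D}_{-J}$ in a local orthonormal coframe $\{e^i\}$ as $\slashed{D}_{-J} = \sum_i c_{-\mathbf{j}}(e^i) \nabla_{e_i}$, following Definition \ref{Definition 3.2}, where $c_{-\mathbf{j}}$ is the Clifford action associated to $-J = J_{-\mathbf{j}}$. The first key input is that $\overline{\Omega} = \tfrac{1}{2}(\omega_K - \sqrt{-1}\omega_I)$ is parallel on the hyperK\"ahler manifold $X$, since $\omega_I, \omega_K$ are $\nabla^g$-parallel; so, exactly as in the proof of Lemma \ref{Lemma 3.6}, one gets $[\nabla_{e_i}, L_{\overline{\Omega}}] = 0$ for every $i$. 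Consequently $[L_{\overline{\Omega}}, \slashed{D}_{-J}] = \sum_i [L_{\overline{\Omega}}, c_{-\mathbf{j}}(e^i)] \nabla_{e_i}$, and the problem is reduced to the purely zeroth-order statement $[L_{\overline{\Omega}}, c_{-\mathbf{j}}(e^i)] = 0$ for each $i$.

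The second key input is that $\overline{\Omega}$ is of type $(2, 0)$ with respect to $-J$ (it is of type $(0, 2)$ with respect to $J$, and passing from $J$ to $-J$ interchanges bidegrees). I would then split the Clifford action using (\ref{Equation 2.5}): $c_{-\mathbf{j}}(e^i) = \sqrt{2}\big( (e^i)^{0, 1} \wedge - \iota_{w_i} \big)$, with $w_i = (g^{-1})^\sharp((e^i)^{1, 0})$ a vector of type $(0, 1)$, all bidegrees taken relative to $-J$. The creation part commutes with $L_{\overline{\Omega}}$ because $\overline{\Omega}$ is of even degree. For the annihilation part, the graded Leibniz rule gives $[L_{\overline{\Omega}}, \iota_{w_i}] = -L_{\iota_{w_i}\overline{\Omega}}$, and $\iota_{w_i}\overline{\Omega} = 0$ because contracting a $(0, 1)$-vector into the $(2, 0)$-form $\overline{\Omega}$ vanishes. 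Thus $[L_{\overline{\Omega}}, c_{-\mathbf{j}}(e^i)] = 0$, completing the argument.

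The step I expect to be the main obstacle is not the final computation but recognizing that one should \emph{not} try to handle $\slashed{D}_{-J} = \sqrt{2}(\partial + \partial^*)$ by treating $\partial = \overline{\partial}_{-J}$ and $\partial^*$ separately. The relation $[L_{\overline{\Omega}}, \partial] = 0$ follows readily from $\overline{\Omega}$ being closed and parallel, but $[L_{\overline{\Omega}}, \partial^*] = 0$ is not the formal adjoint of it and seems to demand a K\"ahler-type identity for the holomorphic symplectic form. Working instead at the level of the single operator $c_{-\mathbf{j}}$ dissolves this difficulty: the lone type condition that $\overline{\Omega}$ is $(2, 0)$ for $-J$ forces $L_{\overline{\Omega}}$ to commute with both the creation and the annihilation parts of $c_{-\mathbf{j}}$ simultaneously, so that $\partial$ and $\partial^*$ are handled at one stroke.
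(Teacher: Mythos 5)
Your proof is correct, but it takes a genuinely different route from the paper's. The paper keeps the decomposition $\slashed{D}_{-J} = \sqrt{2}(\partial + \partial^*)$ and proves the two finer statements $[L_{\overline{\Omega}}, \partial] = 0$ (from $\partial$-closedness of $\overline{\Omega}$) and $[L_{\overline{\Omega}}, \partial^*] = 0$; the latter is exactly the step you flagged as demanding a K\"ahler-type identity, and indeed the paper invokes $[L_{\omega_\zeta}, d^*] = d^{J_\zeta}$ (valid even when $E$ is not $J_\zeta$-holomorphic, following Verbitsky's quaternionic Dolbeault formalism), computes $[L_{\omega_K}, \partial^*] = \tfrac{1}{2}(d^K + \sqrt{-1}d^I)$ and $[L_{\omega_I}, \partial^*] = \tfrac{1}{2}(d^I - \sqrt{-1}d^K)$, and observes that these cancel in $[L_{\overline{\Omega}}, \partial^*] = \tfrac{1}{2}[L_{\omega_K} - \sqrt{-1}L_{\omega_I}, \partial^*]$. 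Your argument instead works at the level of the Clifford module: writing $\slashed{D}_{-J} = \sum_i c_{-\mathbf{j}}(e^i)\nabla_{e_i}$, using that $\overline{\Omega}$ is parallel (the same mechanism as in Lemma \ref{Lemma 3.6}) to kill $[L_{\overline{\Omega}}, \nabla_{e_i}]$, and then verifying the pointwise identity $[L_{\overline{\Omega}}, c_{-\mathbf{j}}(e^i)] = 0$ — which is indeed correct, since $\overline{\Omega}$ has type $(2,0)$ with respect to $-J$, the creation part commutes for degree reasons, and $(g^{-1})^\sharp$ sends $(1,0)$-forms to $(0,1)$-vectors so the contraction part annihilates $\overline{\Omega}$. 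Your reduction of the $\Lambda_{\overline{\Omega}}$-identity via formal self-adjointness of $\slashed{D}_{-J}$ coincides with the paper's. As for what each approach buys: the paper's route yields the sharper separate commutation relations of $L_{\overline{\Omega}}$ with $\partial$ and with $\partial^*$, which sit naturally in the quaternionic Dolbeault framework the authors draw on; your route is more elementary and self-contained, requiring only parallelism plus pointwise Clifford and type algebra, and it treats both halves of $\slashed{D}_{-J}$ at one stroke without importing the Hodge-theoretic identity $[L_{\omega_\zeta}, d^*] = d^{J_\zeta}$.
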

\begin{proof}
	Recall that $\slashed{D}_{-J}^* = \sqrt{2}(\partial + \partial^*)$. Since $\slashed{D}_{-J}^* = \slashed{D}_{-J}$, $[L_{\overline{\Omega}}, \slashed{D}_{-J}] = 0$ implies $[\Lambda_{\overline{\Omega}}, \slashed{D}_{-J}] = 0$. Thus, it suffices to prove the claim that $[L_{\overline{\Omega}}, \partial] = 0$ and $[L_{\overline{\Omega}}, \partial^*] = 0$. Indeed, the key identity is
	\begin{align*}
		[L_{\omega_\zeta}, d^*] = d^{J_\zeta}, \quad \text{for all } \zeta \in \mathbb{S}^2,
	\end{align*}
	This identity holds even when $E$ is not $J_\zeta$-holomorphic. Following similar ideas as Verbitsky's work on quaternionic Dolbeault complex \cite{Ver1996}, we prove the above claim as follows.
	\begin{align*}
	[L_{\omega_K}, \partial^*] = & \tfrac{1}{2} [L_{\omega_K}, d^* - \sqrt{-1} (d^*)^J] = \tfrac{1}{2} (d^K - \sqrt{-1} [(L_{\omega_K})^{J^{-1}}, d^*]^J)\\
	= & \tfrac{1}{2} (d^K + \sqrt{-1} [L_{\omega_K}, d^*]^J) = \tfrac{1}{2} (d^K + \sqrt{-1} (d^K)^J) = \tfrac{1}{2} (d^K + \sqrt{-1} d^I).
	\end{align*}
	Similarly, $[L_{\omega_I}, \partial^*] = \tfrac{1}{2} (d^I - \sqrt{-1} d^K)$. Therefore, $[L_{\overline{\Omega}}, \partial^*] = \tfrac{1}{2}[L_{\omega_K} - \sqrt{-1} L_{\omega_I}, \partial^*] = 0$. On the other hand, since $\overline{\Omega}$ is $\partial$-closed, $[L_{\overline{\Omega}}, \partial] = 0$ on $\Omega^*(X, E)$.
\end{proof}

Then we finish our second step by proving the following lemma.

\begin{lemma}
	\label{Lemma 3.14}
	For all $p \in \mathbb{N}$, the following diagram commutes:
	\begin{center}
		\begin{tikzcd}
			\Omega_{-J}^{2n-p, *}(X, E) \ar[d, "e^{-\frac{\pi}{2}(L_{\overline{\Omega}} - \Lambda_{\overline{\Omega}})}"'] \ar[r, "\slashed{D}_{-J}"] & \Omega_{-J}^{2n-p, *}(X, E) \ar[d, "e^{-\frac{\pi}{2}(L_{\overline{\Omega}} - \Lambda_{\overline{\Omega}})}"]\\
			\Omega_{-J}^{p, *}(X, E) \ar[r, "\slashed{D}_{-J}"'] & \Omega_{-J}^{p, *}(X, E)
		\end{tikzcd}
	\end{center}
\end{lemma}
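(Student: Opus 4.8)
The plan is to deduce the statement entirely from the commutation relations in Lemma \ref{Lemma 3.13} together with the bi-degree bookkeeping already performed in the proof of Lemma \ref{Lemma 3.11}, so that essentially no new analytic input is needed. Observe first that $e^{-\frac{\pi}{2}(L_{\overline{\Omega}} - \Lambda_{\overline{\Omega}})}$ is exactly $\rho_\mathbf{j}(\mathbf{k})^{-1}$, the inverse of the operator appearing in Proposition \ref{Proposition 2.5}; in particular it is a genuine invertible operator realized through the integrated $\operatorname{SL}(2, \mathbb{C})$-action of Proposition \ref{Proposition 2.4}, rather than a merely formal exponential series.

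First I would check that the vertical arrows respect the stated bi-degrees. Conjugation by $-J$ interchanges holomorphic and anti-holomorphic indices, so $\Omega_{-J}^{a, b}(X, E) = \Omega_J^{b, a}(X, E)$; in particular $\Omega_{-J}^{2n-p, q}(X, E) = \Omega_J^{q, 2n-p}(X, E)$. By the computation in the proof of Lemma \ref{Lemma 3.11} (using Proposition \ref{Proposition 2.4} and (\ref{Equation B.4})), $\rho_\mathbf{j}(\mathbf{k})$, and hence its inverse, fixes the first bi-degree index and sends the second index $s$ to $2n - s$. Applied to $\Omega_J^{q, 2n-p}(X, E)$ this yields $\Omega_J^{q, p}(X, E) = \Omega_{-J}^{p, q}(X, E)$, and summing over $q$ shows that $e^{-\frac{\pi}{2}(L_{\overline{\Omega}} - \Lambda_{\overline{\Omega}})}$ maps $\Omega_{-J}^{2n-p, *}(X, E)$ into $\Omega_{-J}^{p, *}(X, E)$, as the diagram demands.

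The commutation itself is then immediate. By Lemma \ref{Lemma 3.13}, both $L_{\overline{\Omega}}$ and $\Lambda_{\overline{\Omega}}$ commute with $\slashed{D}_{-J}$, hence so does the generator $L_{\overline{\Omega}} - \Lambda_{\overline{\Omega}}$; since $\slashed{D}_{-J}$ commutes with this generator it commutes with every term of the exponential, and therefore with $e^{-\frac{\pi}{2}(L_{\overline{\Omega}} - \Lambda_{\overline{\Omega}})}$. This gives $e^{-\frac{\pi}{2}(L_{\overline{\Omega}} - \Lambda_{\overline{\Omega}})} \circ \slashed{D}_{-J} = \slashed{D}_{-J} \circ e^{-\frac{\pi}{2}(L_{\overline{\Omega}} - \Lambda_{\overline{\Omega}})}$, which is precisely commutativity of the square.

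I expect there to be no genuine obstacle here, since Lemma \ref{Lemma 3.13} has already absorbed the only subtle ingredient, namely the identity $[L_{\omega_\zeta}, d^*] = d^{J_\zeta}$ and its consequences for $\partial$ and $\partial^*$. The sole point requiring a word of justification is that exponentiating the apparently unbounded combination $L_{\overline{\Omega}} - \Lambda_{\overline{\Omega}}$ is legitimate; this is guaranteed because it lies in the finite-dimensional fibrewise $\mathfrak{sl}(2, \mathbb{C})$-representation that integrates to the $\operatorname{SL}(2, \mathbb{C})$-action of Proposition \ref{Proposition 2.4}, so the exponential is an honest group element acting fibrewise.
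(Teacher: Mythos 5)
Your proposal is correct and follows essentially the same route as the paper: both deduce from Lemma \ref{Lemma 3.13} that $\slashed{D}_{-J}$ commutes with the generator $L_{\overline{\Omega}} - \Lambda_{\overline{\Omega}}$ and hence with its exponential (the paper phrases this as invariance under the $\operatorname{U}(1)$-action integrating the $\mathfrak{u}(1)$-action, you phrase it as term-by-term commutation justified by the fibrewise finite-dimensional $\operatorname{SL}(2,\mathbb{C})$-representation). Your additional bi-degree bookkeeping, while omitted from the paper's proof of this lemma, matches the computation already recorded in the proof of Lemma \ref{Lemma 3.11} and is a harmless supplement.
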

\begin{proof}
	By Lemma \ref{Lemma 3.13}, $[L_{\overline{\Omega}}, \slashed{D}_{-J}] = [\Lambda_{\overline{\Omega}}, \slashed{D}_{-J}] = 0$. Then $\slashed{D}_{-J}$ is invariant under the $\mathfrak{u}(1)$-action generated by $L_{\overline{\Omega}} - \Lambda_{\overline{\Omega}}$ and thus invariant under the $\operatorname{U}(1)$-action integrating the $\mathfrak{u}(1)$-action. Thus,
	\begin{align*}
	e^{-\frac{\pi}{2}(L_{\overline{\Omega}} - \Lambda_{\overline{\Omega}})} \slashed{D}_{-J} = \slashed{D}_{-J} e^{-\frac{\pi}{2}(L_{\overline{\Omega}} - \Lambda_{\overline{\Omega}})}.
	\end{align*}
\end{proof}

With the above lemmas, the proof of Theorem \ref{Theorem 3.10} becomes straightforward.

\begin{proof}[\myproof{Theorem}{\ref{Theorem 3.10}}]
	We have
	\begin{align*}
		\chi(\mathbf{k}) \circ \slashed{D}_J = & e^{-\frac{\pi}{2}(L_{\overline{\Omega}} - \Lambda_{\overline{\Omega}})} \circ * \circ \slashed{D}_J = e^{-\frac{\pi}{2}(L_{\overline{\Omega}} - \Lambda_{\overline{\Omega}})} \circ \slashed{D}_{-J} \circ *\\
		= & \slashed{D}_{-J} \circ e^{-\frac{\pi}{2}(L_{\overline{\Omega}} - \Lambda_{\overline{\Omega}})} \circ * = \slashed{D}_{-J} \circ \chi(\mathbf{k}).
	\end{align*}
	The first equality and the last equality are due to Lemma \ref{Lemma 3.11}. The second equality follows from Lemma \ref{Lemma 3.12} and the third from Lemma \ref{Lemma 3.14}.
\end{proof}

\section{$\operatorname{Sp}(1)$-symmetries for Morphism Spaces between Hyperbranes}
\label{Section 4}
This last section is a discussion on the physical implications of our main result. As mentioned in Subsection \ref{Subsection 1.1}, Theorems \ref{Theorem 1.1} and \ref{Theorem 3.1} have physical implications on (A, B, A)-branes and (B, B, B)-branes respectively. We shall unify these two notions via generalized hypercomplex geometry. A review of the background framework is included in Appendix \ref{Appendix A} for readers who are unfamiliar with generalized geometry.

\begin{definition}
	\label{Definition 4.1}
	A (rank-$1$) \emph{hyperbrane} on a generalized hypercomplex manifold $(M, \mathcal{J})$ is a submanifold $S$ of $M$ together with a Hermitian line bundle $L$ with a unitary connection, called its \emph{Chan-Paton bundle}, such that $(S, L)$ is a brane on $(M, \mathcal{J}_\zeta)$ (see Definition \ref{Definition A.3}) for all $\zeta \in \mathbb{S}^2$, where $\{ \mathcal{J}_\zeta \}_{\zeta \in \mathbb{S}^2}$ is the twistor family of generalized complex structures on $M$.
\end{definition}

If a generalized hypercomplex structure $\mathcal{J}$ on a smooth manifold $M$ is induced by a hypercomplex structure as in Example \ref{Example A.7} (resp. a holomorphic symplectic structure as in Example \ref{Example A.8}), then we call hyperbranes on $(M, \mathcal{J})$ \emph{(B, B, B)-branes} (resp. \emph{(A, B, A)-branes}).\par
Throughout the rest of this section, unless otherwise specified, let $\mathcal{B}_0 = (S_0, L_0)$ and $\mathcal{B}_1 = (S_1, L_1)$ be hyperbranes on $X$, where $X$ is equipped with a generalized hyperK\"ahler structure arising from a hyperK\"ahler structure on it (as in Remark \ref{Remark A.12}), such that the intersection $S$ of $S_0$ and $S_1$ is clean and compact, and define $\widetilde{L} = \operatorname{Hom}(L_0 \vert_S, L_1 \vert_S)$. We expect the morphism spaces from $\mathcal{B}_0$ to $\mathcal{B}_1$ with respect to the $\mathbb{S}^2$-indexed family of generalized complex structures are intertwined by an $\operatorname{Sp}(1)$-symmetry. We shall discuss whether our expectation is verified in the following different situations.

\subsection{$\operatorname{Sp}(1)$-symmetries for morphism spaces between (B, B, B)-branes}
\quad\par
Suppose $\mathcal{B}_0, \mathcal{B}_1$ are (B, B, B)-branes on $X$. As an illustration, assume $\mathcal{B}_0, \mathcal{B}_1$ are space-filling. For each $\zeta \in \mathbb{S}^2$, the morphism space $\operatorname{Hom}_{J_\zeta}(\mathcal{B}_0, \mathcal{B}_1)$ with respect to the complex structure $J_\zeta$ in the twistor family can be computed as the $\overline{\partial}_{J_\zeta}$-cohomology of $\Omega_{J_\zeta}^{0, *}(X, \widetilde{L})$, which is isomorphic to the kernel of the Dolbeault Laplacian $\Delta_{\overline{\partial}_{J_\zeta}}$ by Hodge theory. Then we can apply Theorem \ref{Theorem 3.1} to establish an $\operatorname{Sp}(1)$-symmetry intertwining $\operatorname{Hom}_{J_\zeta}(\mathcal{B}_0, \mathcal{B}_1)$.\par
Now consider the general case when $\mathcal{B}_0, \mathcal{B}_1$ are not necessarily space-filling. We first explain how $\operatorname{Hom}_{J_\zeta}(\mathcal{B}_0, \mathcal{B}_1)$ is computed as a Dolbeault cohomology for a fixed $\zeta \in \mathbb{S}^2$. At the moment we refer `holomorphic' to `$J_\zeta$-holomorphic' and suppress the suffix $J_\zeta$ for relevant notations. In terms of derived category, $\operatorname{Hom}(\mathcal{B}_0, \mathcal{B}_1)$ is defined as $\operatorname{Ext}_{\mathcal{O}_X}^*(\iota_{0*}\mathcal{L}_0, \iota_{1*}\mathcal{L}_1)$ \footnote{More precisely, according to \cite{KatSha2002}, a B-brane $\mathcal{B}_i = (S_i, L_i)$ in our sense should correspond to the sheaf $\iota_{i*}\widetilde{\mathcal{L}}_i$ instead of $\iota_{i*}\mathcal{L}_i$, where $\widetilde{\mathcal{L}}_i$ is the sheaf of holomorphic sections of $L_i \otimes \sqrt{K_{S_i}}$, assuming a square root $\sqrt{K_{S_i}}$ of the canonical bundle of $S_i$ exists. When $S_i$ is a hyperK\"ahler manifold, $\sqrt{K_{S_i}}$ can be taken to be trivial.}, where $\iota_i: S_i \hookrightarrow X$ is the inclusion map and $\mathcal{L}_i$ is the sheaf of holomorphic sections of $L_i$ for $i \in \{0, 1\}$. By Theorem A.1 in \cite{CalKatSha2003}, we obtain the following local-to-global $\operatorname{Ext}$ spectral sequence
\begin{equation}
\label{Equation 4.1}
E_2 = H_{\overline{\partial}}^{0, p}\left(S, (\widetilde{N}^{1, 0})^{\wedge (q-m)} \otimes \det N^{1, 0}(S/S_1) \otimes \widetilde{L} \right) \Rightarrow \operatorname{Ext}_{\mathcal{O}_X}^{p+q}(\iota_{0*}\mathcal{L}_0, \iota_{1*}\mathcal{L}_1),
\end{equation}
where $\widetilde{N}$ is the \emph{excess bundle} for the intersection of $S_0$ and $S_1$, i.e.
\begin{equation*}
	\widetilde{N} = \frac{TX \vert_S}{TS_0 \vert_S + TS_1 \vert_S},
\end{equation*}
and $m$ is the rank of the holomorphic normal bundle $N^{1, 0}(S/S_1)$ of $S$ in $S_1$. Note that $S$, $S_0$ and $S_1$ are all hyperK\"ahler submanifolds of $X$. Similar to the proof of Proposition 5.1 in \cite{Ver1998}, we can show that $TX \vert_S$, $TS_0 \vert_S + TS_1 \vert_S \cong (TS_0 \vert_S \oplus TS_1 \vert_S)/TS$ and $\widetilde{N}$ are all hyperholomorphic bundles. Then the following short exact sequence of holomorphic vector bundles over $S$ splits:
\begin{center}
	\begin{tikzcd}
		0 \ar[r] & T^{1, 0}S_0 \vert_S + T^{1, 0}S_1 \vert_S \ar[r] & T^{1, 0}X \vert_S \ar[r] & \widetilde{N}^{1, 0} \ar[r] & 0
	\end{tikzcd}
\end{center}
by Proposition 5.2 in \cite{Ver1998}. Theorem 1.8 in \cite{AriCuaHab2019} together with arguments in Section 1.4 in the same reference reveals that the local-to-global $\operatorname{Ext}$ spectral sequence in (\ref{Equation 4.1}) degenerates at the $E_2$-page. Therefore, we can regard $\operatorname{Hom}(\mathcal{B}_0, \mathcal{B}_1)$ as the Dolbeault cohomology
\begin{equation*}
	H_{\overline{\partial}}^{0, *}\left(S, \left( \bigwedge \widetilde{N}^{1, 0} \right) \otimes \det N^{1, 0}(S/S_1) \otimes \widetilde{L} \right).
\end{equation*}
Then we outline how a desirable $\operatorname{Sp}(1)$-symmetry appears in this case. We can easily see that the hypercomplex structures on $X$ intertwines the Hermitian vector bundles
\begin{equation*}
E_\zeta := \left( \bigwedge T_{J_\zeta}^{*(0, 1)}S \right) \otimes \left( \bigwedge \widetilde{N}_{J_\zeta}^{1, 0} \right) \otimes \det N_{J_\zeta}^{1, 0}(S/S_1) \otimes \widetilde{L}.
\end{equation*}
Following the approach stated in Remark \ref{Remark 3.8}, we can prove that it also intertwines the Dolbeault Laplacians $\Delta_{\overline{\partial}_{J_\zeta}}$ on $\Gamma(S, E_\zeta)$ because all the bundles involved are hyperholomorphic. Therefore, there is an $\operatorname{Sp}(1)$-symmetry intertwining $\operatorname{Hom}_{J_\zeta}(\mathcal{B}_0, \mathcal{B}_1)$.\par
As a final remark, Verbitsky \cite{Ver1999} defined the tensor category of \emph{polystable hyperholomorphic reflexive sheaves} for a compact hyperK\"ahler manifold with respect to each complex structure in the twistor family, which should be interpreted as the category of (B, B, B)-branes. He showed that there exists a natural equivalence of any two of these tensor categories. This reveals that an $\operatorname{Sp}(1)$-symmetry for hyperbranes might appear even on the categorical level.

\subsection{$\operatorname{Sp}(1)$-symmetries for morphism spaces between (A, B, A)-branes}
\quad\par
Suppose $\mathcal{B}_0, \mathcal{B}_1$ are (A, B, A)-branes on $X$, i.e. hyperbranes on $X$ with respect to the twistor family $\{\mathcal{J}_\zeta\}_{\zeta \in \mathbb{S}^2}$ of generalized complex structures on $X$ determined by the $J$-holomorphic symplectic form $\Omega_J = \omega_K + \sqrt{-1}\omega_I$ as in Example \ref{Example A.8}. While it is unknown that our expectation is true in general, we place our observation on $\operatorname{Sp}(1)$-symmetries for morphism spaces $\operatorname{Hom}_{\mathcal{J}_\zeta}(\mathcal{B}_0, \mathcal{B}_1)$ in certain cases in the following table.

\begin{center}
	\def\arraystretch{1.5}
	\begin{tabular}{|c|c|c|c|c|}
		\hline
		$\mathcal{B}_0$ & $\mathcal{B}_1$ & $\operatorname{Hom}_{\mathcal{J}_\zeta}(\mathcal{B}_0, \mathcal{B}_1)$ & $\operatorname{Sp}(1)$-symmetry & Idea of proof\\
		\hline
		$\mathcal{B}_{\operatorname{cc}}$ & $\mathcal{B}_{\operatorname{cc}}$ & $H_{\overline{\partial}_{J_{-\kappa \cdot \zeta}}}^{0, *}(X)$ & exists & Theorem \ref{Theorem 3.1}\\
		\hline
		$\overline{\mathcal{B}}_{\operatorname{cc}}$ & $\mathcal{B}_{\operatorname{cc}}$ & $\operatorname{ind} \slashed{D}_{J_{-\kappa \cdot \zeta}}$ & exists & Theorem \ref{Theorem 1.1}\\
		\hline
		$\Omega_J$-Lagr. & $\Omega_J$-Lagr. & $H_d^*(S, \widetilde{L})$ if $\zeta \neq \mp \mathbf{j}$; & exists & Hodge-de Rham\\
		& & $H_{\overline{\partial}_{\pm J}}^{0, *}(S, \textstyle \bigwedge \widetilde{N}_{\pm J}^{1, 0} \otimes \widetilde{L})$ if $\zeta  = \mp \mathbf{j}$ & & Theorem\\
		\hline
		$\Omega_J$-Lagr. & $\mathcal{B}_{\operatorname{cc}}$ & undefined if $\zeta \neq \mp \mathbf{j}$ & unknown & unknown\\
		& & $H_{\overline{\partial}_{\pm J}}^{0, *}(S, \widetilde{L} \otimes \sqrt{K_{\pm J}})$ if $\zeta = \mp \mathbf{j}$ & & \\
		\hline
	\end{tabular}
\end{center}
Note that we shall make our observation in the above cases ignoring instanton corrections due to the following two facts:
\begin{enumerate}
	\item when either one of $\mathcal{B}_0$ and $\mathcal{B}_1$ is space-filling, there are no instanton corrections \cite{Her2012}; and
	\item when the hyperK\"ahler metric on $X$ is complete, there are no instanton corrections for a pair of $\Omega_J$-Lagrangian submanifolds as A-branes on $(X, \omega_{\cos \theta I + \sin \theta K})$ for a generic choice of $\theta \in [0, 2\pi)$ \cite{SolVer2019}. 
\end{enumerate}

Now, we give more detailed comments on the morphism spaces appeared in the above table.

\subsubsection{The $(\mathcal{B}_{\operatorname{cc}}, \mathcal{B}_{\operatorname{cc}})$-system}
\quad\par
Recall that the curvature $F_L$ of the Chan-Paton bundle $L$ of the space-filling A-brane $\mathcal{B}_{\operatorname{cc}}$ on $X$ satisfies $\tfrac{\sqrt{-1}}{2\pi} F_L = \omega_J$. We adopt the approach of Kapustin-Li \cite{KapLi2005} that the $\mathbb{S}^2$-indexed family of morphism spaces for this system is computed by Lie algebroid cohomology 
\footnote{In our convention, we take the Lie algebroid as the $\sqrt{-1}$-eignebundle of the generalized complex structure in the generalized tangent subbundle. Due to this convention, the generalized complex structures $\mathcal{J}_\mathbf{i}$, $\mathcal{J}_\mathbf{k}$ correspond to the symplectic forms $\omega_I, \omega_K$ respectively while $\mathcal{J}_{\mp \mathbf{j}}$ correspond to complex structures $\pm J$.}
associated to the underlying generalized complex submanifold. A careful calculation shows that for $\zeta \in \mathbb{S}^2$,
\begin{align*}
\operatorname{Hom}_{\mathcal{J}_\zeta}(\mathcal{B}_{\operatorname{cc}}, \mathcal{B}_{\operatorname{cc}}) = H_{\overline{\partial}_{J_{-\kappa \cdot \zeta}}}^{0, *}(X),
\end{align*}
where $\kappa = e^{-\frac{\pi}{4} \mathbf{j}}$. For example, $\operatorname{Hom}_J(\mathcal{B}_{\operatorname{cc}}, \mathcal{B}_{\operatorname{cc}}) = H_{\overline{\partial}_J}^{0, *}(X)$ and $\operatorname{Hom}_{\omega_K}(\mathcal{B}_{\operatorname{cc}}, \mathcal{B}_{\operatorname{cc}}) = H_{\overline{\partial}_I}^{0, *}(X)$.

\subsubsection{The $(\overline{\mathcal{B}}_{\operatorname{cc}}, \mathcal{B}_{\operatorname{cc}})$-system}
\quad\par
Recall that the Chan-Paton bundle of $\overline{\mathcal{B}}_{\operatorname{cc}}$ is the dual $L^\vee$ of $L$. For $\zeta = \mp \mathbf{j}$, the morphism space is given by $\operatorname{Hom}_{\pm J}(\overline{\mathcal{B}}_{\operatorname{cc}}, \mathcal{B}_{\operatorname{cc}}) = H_{\overline{\partial}_{\pm J}}^{0, *}(X, \operatorname{Hom}(L^\vee, L)) = H_{\overline{\partial}_{\pm J}}^{0, *}(X, L^{\otimes 2})$; for other $\zeta$, while we have not found a cohomological description of the morphism space, we propose the following definition via a choice of Hermitian metric.

\begin{definition}
	\label{Definition 4.2}
	Let $(M, \omega)$ be a compact symplectic manifold and $B$ be a closed real $2$-form on $M$. Let $\mathcal{B}_0, \mathcal{B}_1$ be space-filling A-branes on $(M, \omega, B)$ with Chan-Paton bundles $L_0, L_1$ respectively. Let $(I, \Omega)$ be the holomorphic symplectic structure on $M$ induced by $\mathcal{B}_1$. Suppose $g$ is a Hermitian metric on $(M, I)$. Define
	\begin{align*}
	\operatorname{Hom}_\mathcal{J}(\mathcal{B}_0, \mathcal{B}_1) = \operatorname{ind} \slashed{D},
	\end{align*}
	where $\mathcal{J}$ is the generalized complex structure on $M$ induced by $\omega$ and $B$ as in Example \ref{Example 4.2} and $\slashed{D}$ is the $\operatorname{Spin}^{\operatorname{c}}$-Dirac operator on $\Omega_I^{0, *}(M, \operatorname{Hom}(L_0, L_1))$.
\end{definition}

Adopting Definition \ref{Definition 4.2}, by Hodge theory we have $\operatorname{Hom}_{\mathcal{J}_\zeta}(\mathcal{B}_0, \mathcal{B}_1) = \operatorname{ind} \slashed{D}_{J_{-\kappa \cdot \zeta}}$ for all $\zeta \in \mathbb{S}^2$, where $\slashed{D}_{J_{-\kappa \cdot \zeta}}$ is the $\operatorname{Spin}^{\operatorname{c}}$-Dirac operator on $\Omega_{J_{-\kappa \cdot \zeta}}^{0, *}(X, L^{\otimes 2})$.

\subsubsection{The $(\mathcal{B}_0, \mathcal{B}_1)$-system for that $S_0, S_1$ are $\Omega_J$-Lagrangian submanifolds}
\quad\par
Suppose $S_0, S_1$ are $\Omega_J$-Lagrangian submanifolds of $X$. We first consider the B-model on $(X, J)$ (the case for $(X, -J)$ is similar). For each $i \in \{0, 1\}$, assume $S_i$ is spin, so that it admits a square root $\sqrt{K_i}$ of the canonical bundle $K_i$ of $S$. Then the B-brane $\mathcal{B}_i$ on $X$ corresponds to the derived pushforward of the sheaf of holomorphic sections of $L_i \otimes \sqrt{K_i}$ onto $X$. Let $N(S/S_i)$ be the normal bundle of $S$ in $S_i$. By Lemma 19 in \cite{Leu2002}, $\det N^{1, 0} (S/S_0) \otimes \det N^{1, 0} (S/S_0)$ is holomorphically trivial. Then the arguments in Section 5.3 in \cite{KatSha2002} show that
\begin{equation*}
	\operatorname{Hom}(K_0 \vert_S, K_1 \vert_S) \cong \det (N^{1, 0}(S/S_1)^\vee)^{\otimes 2}.
\end{equation*}
Hence, under a mild assumption that $\operatorname{Hom}(\sqrt{K_0} \vert_S, \sqrt{K_1} \vert_S) \cong \det (N^{1, 0}(S/S_1)^\vee)$, we obtain a local-to-global $\operatorname{Ext}$ spectral sequence similar to (\ref{Equation 4.1}) whose $E_2$-page is $H_{\overline{\partial}}^{0, p}(S, (\widetilde{N}^{1, 0})^{\wedge (q-m)} \otimes \widetilde{L})$, where $\widetilde{N}$ is the excess bundle for the intersection of $S_0$ and $S_1$ and $m$ is the rank of $N^{1, 0}(S/S_1)$. If either $S_0 = S_1$ or $S_0$ and $S_1$ intersect transversely, then this local-to-global $\operatorname{Ext}$ spectral sequence degenerates at the $E_2$ page. For our purpose, we define $\operatorname{Hom}_J(\mathcal{B}_0, \mathcal{B}_1)$ as $H_{\overline{\partial}}^{0, *}( S, \textstyle \bigwedge \widetilde{N}^{1, 0} \otimes \widetilde{L} )$.\par
On the other hand, consider the A-model on $(X, \mathcal{J}_\zeta)$, where $\zeta \in \mathbb{S}^2$ and $\zeta \neq \pm \mathbf{j}$. There exists a spectral sequence converging to the Floer homology for the pair $(S_0, S_1)$ of Lagrangian submanifolds whose $E_2$ page is the singular cohomology of $S$ with a certain coefficient in the Novikov field \cite{FukOhOhtOno2009}. For our purpose again, we define $\operatorname{Hom}_{\mathcal{J}_\zeta}(\mathcal{B}_0, \mathcal{B}_1)$ to be $H_d^*(S, \widetilde{L})$, where $d$ is the exterior covariant derivative on $\widetilde{L}$. In the sense of the above definitions, we can write down an $\operatorname{Sp}(1)$-symmetry intertwining the $\mathbb{S}^2$-indexed family of morphism spaces $\operatorname{Hom}_{\mathcal{J}_\zeta}(\mathcal{B}_0, \mathcal{B}_1)$, as we have the following isomorphisms
\begin{align*}
H_{\overline{\partial}_{\pm J}}^{0, *} ( S, \textstyle \bigwedge \widetilde{N}_{\pm J}^{1, 0} \otimes \widetilde{L} ) \cong H_{\overline{\partial}_{\pm J}}^{*, *} ( S, \widetilde{L} ) \cong H_d^*(S, \widetilde{L}),
\end{align*}
the first of which is induced by the isomorphism $\widetilde{N}_{\pm J}^{1, 0} \cong T_{\pm J}^{*(1, 0)}S$ via the $\pm J$-holomorphic symplectic form $\tfrac{1}{2} (\omega_K \pm \sqrt{-1}\omega_I)$ and the second of which is by Hodge-de Rham Theorem for unitary local systems (see for instance 
\cite{CheYan2019}).

\subsubsection{The $(\mathcal{B}_0, \mathcal{B}_{\operatorname{cc}})$-system for that $S_0$ is a $\Omega_J$-Lagrangian submanifold}
\quad\par
Let $S_0$ be a $\Omega_J$-Lagrangian submanifold of $X$ and $\mathcal{B}_1 = \mathcal{B}_{\operatorname{cc}}$. Assume $S = S_0$ admits spin structures. Again, we first consider the B-model on $(X, J)$ and choose a square root $\sqrt{K_J}$ of the canonical bundle of $S$. The morphism space $\operatorname{Hom}_J(\mathcal{B}_0, \mathcal{B}_{\operatorname{cc}})$ is defined as $\operatorname{Ext}_{\mathcal{O}_X}(\iota_*\widetilde{\mathcal{L}}_0, \mathcal{L}_1)$, where $\iota: S \hookrightarrow X$ is the inclusion map, $\widetilde{\mathcal{L}}_0$ and $\mathcal{L}_1$ are the sheaves of holomorphic sections of $L_0 \otimes \sqrt{K_J}$ and $L_1$ respectively. As $S$ is a holomorphic Lagrangian submanifold of $X$, its holomorphic normal bundle in $X$ is isomorphic to $K_J^\vee$. Thus, $\operatorname{Hom}_J(\mathcal{B}_0, \mathcal{B}_{\operatorname{cc}}) \cong H_{\overline{\partial}_J}^{0, *}(S, \widetilde{L} \otimes \sqrt{K_J})$. Similarly, $\operatorname{Hom}_{-J}(\mathcal{B}_0, \mathcal{B}_{\operatorname{cc}}) \cong H_{\overline{\partial}_{-J}}^{0, *}(S, \widetilde{L} \otimes \sqrt{K_{-J}})$, where $\sqrt{K_{-J}}$ is chosen to be the dual of $\sqrt{K_J}$. These two B-model morphism spaces are isomorphic via the Hodge star operator on $S$.\par
For $\zeta \in \mathbb{S}^2$ other than $\pm \mathbf{j}$, there is an attempt in \cite{BisGua2022} to mathematically define $\operatorname{Hom}_{\omega_K}(\mathcal{B}_0, \mathcal{B}_{\operatorname{cc}})$ in the presence of an $I$-holomorphic Lagrangian fibration of $X$. We believe that there is an appropriate mathematical definition of the A-model morphism spaces such that our expectation is true for the $(\mathcal{B}_0, \mathcal{B}_{\operatorname{cc}})$-system, at least under suitable conditions on the formal neighbourhood of $S$ in $X$.

\appendix

\section{Recollections in Generalized Geometry}
\label{Appendix A}
In this appendix, we shall review some relevant basic knowledges in generalized geometry, which is a subject proposed by Hitchin \cite{Hit2003} and whose foundation is first developed by Gualtieri \cite{Gua2011}.\par
Throughout this appendix, assume $M$ is a smooth manifold. Generalized geometry concerns with different kinds of geometric structures on the \emph{generalized tangent bundle} $\mathcal{T}M := TM \oplus T^*M$. This bundle $\mathcal{T}M$ admits canonically
\begin{enumerate}
	\item a pseudo-Riemannian metric $\langle \quad, \quad \rangle$ given by $\langle u + \alpha, v + \beta \rangle = \tfrac{1}{2} (\alpha(v) + \beta(u))$ for $x \in M$ and $u, v \in T_xM$ and $\alpha, \beta \in T_x^*M$; and
	\item an $\mathbb{R}$-bilinear map $[\quad, \quad]: \Gamma(M, \mathcal{T}M) \times \Gamma(M, \mathcal{T}M) \to \Gamma(M, \mathcal{T}M)$ given by
	\begin{align*}
		[u + \alpha, v + \beta] = [u, v] + \mathcal{L}_u\beta - \iota_vd\alpha,
	\end{align*}
	for $u, v \in \Gamma(M, TM)$ and $\alpha, \beta \in \Gamma(M, T^*M)$, where $[u, v]$ is the Lie bracket of vector fields.
\end{enumerate}
The data $\mathcal{T}M$, $\langle \quad, \quad \rangle$, $[\quad, \quad]$ together with the canonical projection $\mathcal{T}M \to TM$ forms an exact Courant algebroid over $M$ with vanishing \v{S}evera class. We denote by $\operatorname{O}(\mathcal{T}M)$ the fibre bundle whose fibre $\operatorname{O}(\mathcal{T}_xM)$ over $x \in M$ is the set of all $\phi \in \operatorname{End}(\mathcal{T}_xM)$ such that $\langle \phi(e), \phi(e') \rangle = \langle e, e'\rangle$ for all $e, e' \in \mathcal{T}_xM$. A subbundle $L$ of $\mathcal{T}M_\mathbb{C}$ is said to be \emph{involutive} if $[\Gamma(M, L), \Gamma(M, L)] \subset \Gamma(M, L)$.\par
Now we shall introduce generalized complex structures, generalized hypercomplex structures and generalized hyperK\"ahler structures in the following subsections.

\subsection{Generalized complex geometry}
\quad\par
A complex structure on $M$ is a smooth section $J$ of $\operatorname{End}(TM)$ such that $J^2 = -\operatorname{Id}_{TM}$ and the $\sqrt{-1}$-eignebundle of $J$ in $TM_\mathbb{C}$ is involutive. A generalized complex structure is defined similarly.

\begin{definition}
	A \emph{generalized complex structure} on $M$ is a smooth section $\mathcal{J}$ of $\operatorname{O}(\mathcal{T}M)$ such that $\mathcal{J}^2 = -\operatorname{Id}_{\mathcal{T}M}$ and the $\sqrt{-1}$-eigenbundle of $\mathcal{J}$ in $\mathcal{T}M_\mathbb{C}$ is involutive. A \emph{generalized complex manifold} is a smooth manifold equipped with a generalized complex structure.
\end{definition}

Then we introduce a generalization of complex submanifolds.

\begin{definition}
	A \emph{generalized complex submanifold} \footnote{Here we adopt Gualtieri's definition \cite{Gua2011} of generalized complex submanifolds. However, in some literatures \cite{BenBoy2004, Cav2007}, generalized complex submanifolds refer to other types of submanifolds while generalized Lagrangian submanifolds refer to such submanifolds together with $2$-forms appeared in our definition.} of a generalized complex manifold $(M, \mathcal{J})$ is a pair $(S, F)$, where $S$ is a smooth submanifold of $M$ and $F \in \Omega^2(S)$ such that the \emph{generalized tangent subbundle} of $(S, F)$
	\begin{equation*}
		\mathcal{T}^F S := \{ u + \alpha \in T_xS \oplus T_x^*M: x \in M, \alpha \vert_{T_xS} = \iota_uF \}
	\end{equation*}
	is preserved by $\mathcal{J}$, i.e. $\mathcal{J}(\mathcal{T}^F S) = \mathcal{T}^F S$.
\end{definition}

Branes, serving as boundary conditions in a $2$d $\sigma$-model with target on a generalized complex manifold, are generalized complex submanifolds such that their underlying $2$-forms represent integral cohomology classes.

\begin{definition}
	\label{Definition A.3}
	A (\emph{rank}-$1$) \emph{brane} on a generalized complex manifold $(M, \mathcal{J})$ is a submanifold $S$ of $M$ together with a Hermitian line bundle $L$ over $S$ with a unitary connection $\nabla$, called its \emph{Chan-Paton bundle}, such that $(S, F)$ is a generalized complex submanifold of $(M, \mathcal{J})$, where $F = \tfrac{\sqrt{-1}}{2\pi} F_L$ and $F_L$ is the curvature of $\nabla$.
\end{definition}

The following are two basic examples of generalized complex structures.

\begin{example}
	\label{Example 4.2}
	Suppose $\omega$ is a symplectic form and $B$ is a closed real $2$-form on $M$. Then the following vector bundle endomorphism is a generalized complex structure on $M$:
	\begin{align*}
	\mathcal{J}_{\omega, B} := \begin{pmatrix}
	\operatorname{Id}_{TM} & 0\\
	B_\sharp & \operatorname{Id}_{T^*M}
	\end{pmatrix}
	\begin{pmatrix}
	0 & -(\omega^{-1})^\sharp\\
	\omega_\sharp & 0
	\end{pmatrix}
	\begin{pmatrix}
	\operatorname{Id}_{TM} & 0\\
	-B_\sharp & \operatorname{Id}_{T^*M}
	\end{pmatrix}
	: \mathcal{T}M \to \mathcal{T}M.
	\end{align*}
	Branes on $(M, \mathcal{J}_{\omega, B})$ are called \emph{A-branes} on $(M, \omega, B)$. Explicitly, an A-brane on $(M, \omega, B)$ is a coisotropic submanifold $S$ of $M$ equipped with a Hermitian line bundle $L$ over it with a unitary connection of curvature $F_L$ satisfying the conditions that, letting $F = \tfrac{\sqrt{-1}}{2\pi}F_L$, $\mathcal{L}S$ be the characteristic foliation of $S$ and $\mathcal{F}S = TS/\mathcal{L}S$,
	\begin{itemize}
		\item $F - B \vert_S \in \Omega^2(S)$ descends to a smooth section $\tilde{F} \in \Gamma(S, \bigwedge^2 \mathcal{F}^*S)$; and
		\item the composition $I := \tilde{\omega}^{-1} \circ \tilde{F}: \mathcal{F}S \to \mathcal{F}S$ defines a complex structure on the vector bundle $\mathcal{F}S$, where $\tilde{\omega} \in \Gamma(S, \bigwedge^2 \mathcal{F}^*S)$ is induced by $\omega \vert_S$.
	\end{itemize}
	In particular, when an A-brane on $M$ is \emph{space-filling}, i.e. its underlying submanifold is $M$, it induces a holomorphic symplectic structure $(I, \Omega)$ on $M$ with $\Omega := F - B + \sqrt{-1} \omega$.
\end{example}

\begin{example}
	Suppose $J$ is a complex structure on $M$ and $\sigma$ is a $\overline{\partial}$-closed $(0, 2)$-form on $M$. Then the following vector bundle endomorphism is a generalized complex structure on $M$:
	\begin{align*}
	\mathcal{J}_{J, \sigma} := \begin{pmatrix}
	-J & 0\\
	\omega_\sharp & J^{\operatorname{t}}
	\end{pmatrix}: \mathcal{T}M \to \mathcal{T}M, \quad \text{where } \omega = 2\sqrt{-1}(\sigma - \overline{\sigma}).
	\end{align*}
	Branes on $(M, \mathcal{J}_{J, \sigma})$ are called \emph{B-branes} on $(M, J, \sigma)$. Explicitly, a B-brane on $(M, J, \sigma)$ is a complex submanifold $S$ of $(M, J)$ equipped with a Hermitian line bundle $L$ over it with a unitary connection of curvature $F_L$ satisfying the condition that the $(0, 2)$-part of $F = \tfrac{\sqrt{-1}}{2\pi}F_L$ is $\sigma \vert_S$.
\end{example}

\subsection{Generalized hypercomplex geometry}
\label{Subsection A.2}
\quad\par
A hypercomplex structure on $M$ is equivalent to an algebra homomorphism
\begin{equation*}
	\mathfrak{J}: \mathbb{H} \to \Gamma(M, \operatorname{End}(TM))
\end{equation*}
such that for all $\zeta \in \mathbb{S}^2$, $J_\zeta := \mathfrak{J}(\zeta)$ is a complex structure on $M$, where $\mathbb{S}^2$ denotes the unit sphere of the imaginary part of the algebra of quaternions $\mathbb{H}$. We have an analogous definition for generalized hypercomplex structures.

\begin{definition}
	A \emph{generalized hypercomplex structure} on $M$ is an algebra homomorphism
	\begin{align*}
		\mathcal{J}: \mathbb{H} \to \Gamma(M, \operatorname{End}(\mathcal{T}M)) \quad \zeta \to \mathcal{J}_\zeta
	\end{align*}
	such that for all $\zeta \in \mathbb{S}^2$, $\mathcal{J}_\zeta$ is a generalized complex structure of $M$. A \emph{generalized hypercomplex manifold} is a smooth manifold equipped with a generalized hypercomplex structure.
\end{definition}

For a generalized hypercomplex manifold $(M, \mathcal{J})$, we call the $\mathbb{S}^2$-indexed family $\{ \mathcal{J}_\zeta \}_{\zeta \in \mathbb{S}^2}$ the \emph{twistor family of generalized complex structures} of $(M, \mathcal{J})$.
Analogous to generalized complex geometry, the following are two basic examples of generalized hypercomplex structures \cite{HonSti2015}. Let $(1, \mathbf{i}, \mathbf{j}, \mathbf{k})$ be the standard basis of $\mathbb{H}$.

\begin{example}
	\label{Example A.7}
	A hypercomplex structure $(I, J, K)$ on $M$ induces a generalized hypercomplex structure $\mathcal{J}$ determined by
	\begin{equation*}
		\mathcal{J}_\mathbf{i} = \begin{pmatrix}
			I & 0\\
			0 & -I^{\operatorname{t}}
		\end{pmatrix}, \quad
		\mathcal{J}_\mathbf{j} = \begin{pmatrix}
		J & 0\\
		0 & -J^{\operatorname{t}}
		\end{pmatrix}, \quad
		\mathcal{J}_\mathbf{k} = \begin{pmatrix}
		K & 0\\
		0 & -K^{\operatorname{t}}
		\end{pmatrix}.
	\end{equation*}
\end{example}

\begin{example}
	\label{Example A.8}
	Let $J$ be a complex structure on $M$ and $\Omega$ be a holomorphic symplectic form on $(M, J)$. Then these structures induce a generalized hypercomplex structure $\mathcal{J}$ determined by
	\begin{equation*}
	\mathcal{J}_\mathbf{i} = \begin{pmatrix}
	0 & -(\omega_I^{-1})^\sharp\\
	(\omega_I)_\sharp & 0
	\end{pmatrix}, \quad
	\mathcal{J}_\mathbf{j} = \begin{pmatrix}
	J & 0\\
	0 & -J^{\operatorname{t}}
	\end{pmatrix}, \quad
	\mathcal{J}_\mathbf{k} = \begin{pmatrix}
	0 & -(\omega_K^{-1})^\sharp\\
	(\omega_K)_\sharp & 0
	\end{pmatrix},
	\end{equation*}
	where $\omega_K = \operatorname{Re} \Omega$ and $\omega_I = \operatorname{Im} \Omega$.
\end{example}

\subsection{Generalized hyperK\"ahler geometry}
\quad\par
We first introduce a generalization of Riemannian metrics.

\begin{definition}
	A \emph{generalized Riemannian metric} on $M$ is a smooth section $\mathcal{G}$ of $\operatorname{O}(\mathcal{T}M)$ such that $\mathcal{G}^2 = \operatorname{Id}_{\mathcal{T}M}$ and $\langle \mathcal{G}(\quad), \quad \rangle$ is a positive definite metric on $\mathcal{T}M$.
\end{definition}

There is a one-to-one correspondence between generalized Riemannian metrics $\mathcal{G}$ on $M$ and pairs $(g, B)$, where $g$ is a Riemannian metric on $M$ and $B$ is a real closed $2$-form on $M$, and the correspondence is given by
\begin{equation*}
	\mathcal{G} = \begin{pmatrix}
	\operatorname{Id}_{TM} & 0\\
	B_\sharp & \operatorname{Id}_{T^*M}
	\end{pmatrix}
	\begin{pmatrix}
	0 & (g^{-1})^\sharp\\
	g_\sharp & 0
	\end{pmatrix}
	\begin{pmatrix}
	\operatorname{Id}_{TM} & 0\\
	-B_\sharp & \operatorname{Id}_{T^*M}
	\end{pmatrix}.
\end{equation*}
Moreover, $\mathcal{G}$ determines an algebra homomorphism
\begin{equation*}
	\mathbb{D} \to \Gamma(M, \operatorname{End}(\mathcal{T}M)), \quad a + b \mathbf{r} \mapsto a \operatorname{Id}_{\mathcal{T}M} + b\mathcal{G},
\end{equation*}
where $\mathbb{D}$ is the algebra of \emph{split complex numbers}, i.e. the $2$-dimensional algebra over $\mathbb{R}$ with identity $1$ and an element $\mathbf{r}$ such that $\mathbf{r}^2 = 1$ and $(1, \mathbf{r})$ forms a basis. This motivates our definition of a generalized hyperK\"ahler structure on $M$ as a certain algebra homomorphism from the algebra of \emph{split-biquaternions} $\mathbb{H} \otimes_\mathbb{R} \mathbb{D}$ to $\Gamma(M, \operatorname{End}(\mathcal{T}M))$.

\begin{definition}
	A \emph{generalized hyperK\"ahler structure} on $M$ is an algebra homomorphism
	\begin{align*}
	\mathcal{K}: \mathbb{H} \otimes_\mathbb{R} \mathbb{D} \to \Gamma(M, \operatorname{End}(\mathcal{T}M)), \quad \mu \mapsto \mathcal{K}_\mu,
	\end{align*}
	such that $\mathcal{G} := \mathcal{K}_{1 \otimes \mathbf{r}}$ is a generalized Riemannian metric on $M$ and for all $\zeta \in \mathbb{S}^2$, $\mathcal{J}_\zeta := \mathcal{K}_{\zeta \otimes 1}$ and $\tilde{\mathcal{J}}_\zeta := \mathcal{K}_{\zeta \otimes \mathbf{r}} = \mathcal{J}_\zeta\mathcal{G} = \mathcal{G}\mathcal{J}_\zeta$ are generalized complex structures on $M$. A \emph{generalized hyperK\"ahler manifold} is a smooth manifold equipped with a generalized hyperK\"ahler structure.
\end{definition}

We call $\mathcal{K} \circ \iota: \mathbb{H} \to \Gamma(M, \operatorname{End}(\mathcal{T}M))$ the underlying generalized hypercomplex structure of $\mathcal{K}$, where $\iota: \mathbb{H} \hookrightarrow \mathbb{H} \otimes_\mathbb{R} \mathbb{D}$ is given by $\zeta \mapsto \zeta \otimes 1$.\par
To see how hyperK\"ahler structures give rise to generalized hyperK\"ahler structures, we quote the following proposition.

\begin{proposition}
	\label{Proposition A.11}
	(Proposition 8 in \cite{Des2021})
	There is a one-to-one correspondence between generalized hyperK\"ahler structures $\mathcal{K}$ on $M$ and sets of data
	\begin{equation*}
		(g, B, \mathfrak{J}^+, \mathfrak{J}^-),
	\end{equation*}
	where $B$ is a real closed $2$-form on $M$, $\mathfrak{J}^+, \mathfrak{J}^-$ are hypercomplex structures on $M$ and $g$ is a hyperK\"ahler metric on $M$ with respect to both $\mathfrak{J}^+$ and $\mathfrak{J}^-$.
\end{proposition}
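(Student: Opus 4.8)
The plan is to deduce the correspondence from the classical generalized Kähler / bi-Hermitian correspondence of Gualtieri \cite{Gua2011}, applied simultaneously to every complex structure in the twistor family, the key algebraic input being the splitting $\mathbb{H} \otimes_\mathbb{R} \mathbb{D} \cong \mathbb{H} \times \mathbb{H}$ induced by the idempotents of $\mathbb{D}$. Concretely, $\mathbb{D}$ contains the orthogonal idempotents $e_\pm = \tfrac{1}{2}(1 \pm \mathbf{r})$, with $e_+ e_- = 0$ and $e_+ + e_- = 1$, so that $\mathbb{H} \otimes_\mathbb{R} \mathbb{D}$ is the product of the two subalgebras $\mathbb{H} \otimes e_+$ and $\mathbb{H} \otimes e_-$. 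This splitting is what organizes the whole argument.

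First I would carry out the pointwise, purely algebraic reduction. Given a generalized hyperK\"ahler structure $\mathcal{K}$, the element $\mathcal{G} = \mathcal{K}_{1 \otimes \mathbf{r}}$ is a generalized Riemannian metric, so $\mathcal{T}M = V_+ \oplus V_-$, where $V_\pm$ are the $\pm 1$-eigenbundles of $\mathcal{G}$, namely the images of the projections $\mathcal{K}(1 \otimes e_\pm) = \tfrac{1}{2}(\operatorname{Id}_{\mathcal{T}M} \pm \mathcal{G})$. Since each $\mathcal{J}_\zeta = \mathcal{K}_{\zeta \otimes 1}$ commutes with $\mathcal{G}$, it preserves $V_\pm$, and the anchor $\pi \colon \mathcal{T}M \to TM$ restricts to isomorphisms $V_\pm \xrightarrow{\sim} TM$, under which $\mathcal{G}$ corresponds to the pair $(g, B)$ and $\mathcal{J}_\zeta|_{V_\pm}$ transports to an endomorphism $J^\pm_\zeta$ of $TM$. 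Restricting the algebra-homomorphism property of $\mathcal{K}$ to the factor $\mathbb{H} \otimes e_\pm$ shows that $\zeta \mapsto J^\pm_\zeta$ is an algebra homomorphism $\mathbb{H} \to \Gamma(M, \operatorname{End}(TM))$, i.e. the $J^\pm_\zeta$ assemble into almost-hypercomplex structures $\mathfrak{J}^\pm$ with $J^\pm_\zeta = \mathfrak{J}^\pm(\zeta)$; that $\mathcal{J}_\zeta \in \operatorname{O}(\mathcal{T}M)$ yields bi-Hermitian compatibility of $g$ with each $J^\pm_\zeta$.

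The analytic content is integrability, which I would extract one $\zeta$ at a time. For fixed $\zeta \in \mathbb{S}^2$ the pair $(\mathcal{J}_\zeta, \tilde{\mathcal{J}}_\zeta)$ consists of two commuting generalized complex structures with $-\mathcal{J}_\zeta \tilde{\mathcal{J}}_\zeta = \mathcal{G}$, hence is an (untwisted) generalized K\"ahler structure, and Gualtieri's correspondence \cite{Gua2011} identifies it with the bi-Hermitian data $(g, B, J^+_\zeta, J^-_\zeta)$ subject to the torsion identity $d^c_{J^+_\zeta} \omega^+_\zeta = - d^c_{J^-_\zeta} \omega^-_\zeta = dB$, where $\omega^\pm_\zeta = g(J^\pm_\zeta \,\cdot\,, \,\cdot\,)$. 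The crucial point is that $\mathcal{G}$, and hence $(g, B)$, is independent of $\zeta$, so the \emph{same} $g$ and $B$ occur for every $\zeta$. Because $B$ is closed, $dB = 0$; since for the fundamental form of an integrable complex structure $d^c \omega = 0$ is equivalent to $d\omega = 0$, every $\omega^\pm_\zeta$ is closed and each $J^\pm_\zeta$ is integrable. Integrability for all $\zeta \in \mathbb{S}^2$ makes $\mathfrak{J}^\pm$ genuine hypercomplex structures, and closedness of all the fundamental forms makes $g$ hyperK\"ahler with respect to both, producing the quadruple $(g, B, \mathfrak{J}^+, \mathfrak{J}^-)$.

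For the converse I would reverse the construction: from $(g, B)$ build $\mathcal{G}$, and from $(g, B, J^+_\zeta, J^-_\zeta)$ build $\mathcal{J}_\zeta$ and $\tilde{\mathcal{J}}_\zeta = \mathcal{J}_\zeta \mathcal{G}$ by Gualtieri's construction; the hyperK\"ahler hypothesis forces all $\omega^\pm_\zeta$ closed, so the torsion identity holds with both sides zero, guaranteeing integrability of each $\mathcal{J}_\zeta$ and $\tilde{\mathcal{J}}_\zeta$. That $\zeta \mapsto \mathcal{K}_{\zeta \otimes 1}$ respects quaternion multiplication follows since it does so on each $V_\pm$, where it acts by $\mathfrak{J}^\pm$, and $\mathcal{T}M = V_+ \oplus V_-$; the two assignments are then mutually inverse by the eigenbundle description. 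I expect the main obstacle to lie in the bookkeeping of the integrability step: verifying that the single generalized K\"ahler correspondence applies uniformly across the entire $\mathbb{S}^2$-family with one common pair $(g, B)$, that the resulting $\zeta$-dependence of $J^\pm_\zeta$ is exactly the linear quaternionic one demanded by the algebra homomorphism, and that closedness of $B$ is precisely the hypothesis promoting the per-$\zeta$ bi-Hermitian structures to bi-hyperK\"ahler ones.
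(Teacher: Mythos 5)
The paper does not actually prove Proposition A.11 --- it is quoted from \cite{Des2021} --- so there is no internal argument to compare yours against; judged on its own, your reduction is the natural one and most of it is sound. The splitting of $\mathbb{H} \otimes_\mathbb{R} \mathbb{D}$ by the idempotents $\tfrac{1}{2}(1 \pm \mathbf{r})$, the transport of $\mathcal{J}_\zeta \vert_{V_\pm}$ along the anchor to almost hypercomplex structures $\mathfrak{J}^\pm$, the $g$-compatibility coming from orthogonality, the per-$\zeta$ application of Gualtieri's generalized K\"ahler correspondence to the pairs $(\mathcal{J}_\zeta, \tilde{\mathcal{J}}_\zeta)$, and the converse assembly are all correct.

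The genuine gap is the sentence ``Because $B$ is closed, $dB = 0$.'' In the forward direction $B$ is not a hypothesis but an output: it is the $2$-form extracted from $\mathcal{G}$, and under the definition of a generalized Riemannian metric actually in force ($\mathcal{G} \in \Gamma(M, \operatorname{O}(\mathcal{T}M))$ with $\mathcal{G}^2 = \operatorname{Id}_{\mathcal{T}M}$ and positivity) that $2$-form is an arbitrary $2$-form; nothing in the generalized hyperK\"ahler axioms forces it to be closed. What your argument honestly establishes is that the axioms are equivalent to \emph{bi-HKT} geometry: all $J^\pm_\zeta$ integrable with common Bismut torsions $d^c_{J^\pm_\zeta}\omega^\pm_\zeta = \pm dB$. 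That is strictly weaker than bi-hyperK\"ahler. Concretely, on a contractible open subset $U$ of $\mathbb{H} \setminus \{0\}$ take $g = |q|^{-2} g_{\mathrm{flat}}$, let $\mathfrak{J}^+$ be the constant left-multiplication hypercomplex structure and $\mathfrak{J}^-$ its pullback under the inversion $q \mapsto q^{-1}$ (an orientation-preserving isometry of $g$); the two Lee forms are opposite, and the common torsion is a multiple of the pullback of the volume form of $S^3$, which is closed and, on $U$, exact. Choosing $B$ with $dB$ equal to this torsion and running Gualtieri's correspondence backwards produces an algebra homomorphism $\mathcal{K}$ satisfying every clause of the definition of a generalized hyperK\"ahler structure, while $g$ is hyperK\"ahler for neither $\mathfrak{J}^+$ nor $\mathfrak{J}^-$ and $dB \neq 0$. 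Consequently closedness of $B$ cannot be derived; it must be built into the notion of generalized Riemannian metric, which is exactly what the paper does (tacitly, and nonstandardly) when it states the correspondence $\mathcal{G} \leftrightarrow (g, B)$ with $B$ \emph{closed} --- equivalently, one restricts to generalized metrics obtained from bare metrics by $B$-field transforms that are Courant automorphisms. Your proof is complete only once you invoke that convention explicitly at this step; without it, the forward direction of the proposition is not merely unfinished but false as stated.
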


We know that the automorphism group of the $\mathbb{R}$-algebra $\mathbb{H}$, which is isomorphic to $\operatorname{SO}(3)$, naturally acts on the space of hyperK\"ahler structures on $M$ by rotating the twistor families of complex structures. Indeed, there is an $\mathbb{R}$-algebra isomorphism
\begin{equation*}
\mathbb{H} \oplus \mathbb{H} \to \mathbb{H} \otimes_\mathbb{R} \mathbb{D}, \quad (\zeta_+, \zeta_-) \mapsto \zeta_+ \otimes \mathbf{r}_+ + \zeta_- \otimes \mathbf{r}_-,
\end{equation*}
where $\mathbf{r}_\pm = \tfrac{1}{2} (1 \pm \mathbf{r})$, and Proposition \ref{Proposition A.11} implies that the $\operatorname{SO}(3) \times \operatorname{SO}(3)$-action on $\mathbb{H} \oplus \mathbb{H} \cong \mathbb{H} \otimes_\mathbb{R} \mathbb{D}$ induces a natural $\operatorname{SO}(3) \times \operatorname{SO}(3)$-action on the space of generalized hyperK\"ahler structures on $M$: if $(\phi_+, \phi_-) \in \operatorname{SO}(3) \times \operatorname{SO}(3)$ and $\mathcal{K}$ corresponds to $(g, B, \mathfrak{J}^+, \mathfrak{J}^-)$, then $(\phi_+, \phi_-) \cdot \mathcal{K}$ corresponds to $(g, B, \phi_+ \cdot \mathfrak{J}^+, \phi_- \cdot \mathfrak{J}^-)$. As a result, for a generalized hyperK\"ahler structure $\mathcal{K}$ on $M$, the restriction of $\mathcal{K}$ on $\mathbb{S}^2 \times \mathbb{S}^2 \subset \mathbb{H} \oplus \mathbb{H} \cong \mathbb{H} \otimes_\mathbb{R} \mathbb{D}$ is an $\mathbb{S}^2 \times \mathbb{S}^2$-indexed family of generalized complex structures.

\begin{remark}
	\label{Remark A.12}
	A single hyperK\"ahler structure $(g, \mathfrak{J})$ on $M$ induces a generalized hyperK\"ahler structure $\mathcal{K}$ associated to $(g, 0, \mathfrak{J}, \mathfrak{J})$. In this case, the aforementioned $\mathbb{S}^2 \times \mathbb{S}^2$-indexed family is called the \emph{generalized twistor family} of the hyperK\"ahler manifold $(M, g, \mathfrak{J})$ in \cite{GloSaw2015}, and the $\operatorname{SO}(3) \times \operatorname{SO}(3)$-action on $\mathcal{K}$ gives rise to a family of generalized hyperK\"ahler structures on $M$ whose underlying generalized hypercomplex structures are induced by either a hypercomplex structure or a holomorphic symplectic form on $M$, up to B-field transformation and rescaling of the metric.
\end{remark}

\section{Irreducible Representations of $\operatorname{SL}(2, \mathbb{C})$}
\label{Appendix B}
In this appendix, we shall recall the description of each finite dimensional irreducible complex representation of $\operatorname{SL}(2, \mathbb{C})$ in terms of polynomials of two variables. Consider any $m \in \mathbb{N}$. Let $U_m$ be the vector space of homogeneous polynomials of degree $m$ in two variables $x, y$ over $\mathbb{C}$. This vector space $U_m$ has a basis $(x^m, x^{m-1}y, ..., xy^{m-1}, y^m)$. Define three linear operators on $U_m$:
\begin{equation}
\label{Equation B.1}
L_m := x \tfrac{\partial}{\partial y}, \quad \Lambda_m := y \tfrac{\partial}{\partial x}, \quad \text{and} \quad H_m := x \tfrac{\partial}{\partial x} - y \tfrac{\partial}{\partial y}.
\end{equation}
Then $(L_m, \Lambda_m, H_m)$ forms a Lefschetz triple and gives an irreducible complex representation
\begin{align*}
\rho_m^{\mathfrak{sl}(2, \mathbb{C})}: \mathfrak{sl}(2, \mathbb{C}) \to \mathfrak{gl}(U_m)
\end{align*}
of $\mathfrak{sl}(2, \mathbb{C})$ which integrates to an irreducible complex representation of $\operatorname{SL}(2, \mathbb{C})$:
\begin{equation*}
\rho_m^{\operatorname{SL}(2, \mathbb{C})}: \operatorname{SL}(2, \mathbb{C}) \to \operatorname{GL}(U_m).
\end{equation*}
Identify $\mathfrak{sp}(1)$ as the imaginary part of the algebra of quaternions, which is spanned by $\mathbf{i}, \mathbf{j}, \mathbf{k}$. There is a complex representation of $\mathfrak{sp}(1)$:
\begin{equation*}
\rho_m^{\mathfrak{sp}(1)}: \mathfrak{sp}(1) \to \mathfrak{gl}(U_m)
\end{equation*}
defined by $\rho_m^{\mathfrak{sp}(1)}(\mathbf{j}) = \sqrt{-1}H_m$, $\rho_m^{\mathfrak{sp}(1)}(\mathbf{k}) = L_m - \Lambda_m$ and $\rho_m^{\mathfrak{sp}(1)}(\mathbf{i}) = \sqrt{-1}(L_m + \Lambda_m)$. It integrates to a complex representation of $\operatorname{Sp}(1)$:
\begin{equation}
\label{Equation B.2}
\rho_m^{\operatorname{Sp}(1)}: \operatorname{Sp}(1) \to \operatorname{GL}(U_m)
\end{equation}
and for all $i \in \{0, ..., m\}$,
\begin{align}
\rho_m^{\operatorname{Sp}(1)}(\mathbf{j})(x^iy^{m-i}) = & \sqrt{-1}^{2i-m} x^iy^{m-i},\\
\label{Equation B.4}
\rho_m^{\operatorname{Sp}(1)}(\mathbf{k})(x^iy^{m-i}) = & (-1)^i x^{m-i}y^i,\\
\rho_m^{\operatorname{Sp}(1)}(\mathbf{i})(x^iy^{m-i}) = & \sqrt{-1}^m x^{m-i}y^i.
\end{align}

\bibliographystyle{amsplain}
\bibliography{References}

\end{document}